\documentclass[3p]{elsarticle}

\usepackage{lineno,hyperref}

\modulolinenumbers[5]

\journal{arXiv}









\bibliographystyle{elsarticle-num}

\usepackage{amsfonts}
\usepackage{amsmath, amscd,amssymb,bm,amsbsy,epsf}
    \usepackage{enumerate}
    \usepackage{paralist}
\usepackage{bbm, dsfont}
\usepackage{graphicx,color}
\usepackage{subfigure}
\usepackage{verbatim}
\usepackage{inputenc}
\usepackage{bbm}
\usepackage{multicol}
\usepackage{balance}
\usepackage{verbatim}
\usepackage{bbm, doi}

\usepackage{sfmath} 
\usepackage{sans} 

\usepackage{multirow}


\DeclareMathAlphabet{\mathpzc}{OT1}{pzc}{m}{it}

\DeclareMathOperator*{\wklim}{wk-lim}

\DeclareMathOperator*{\Div}{div}

\newtheorem{theorem}{Theorem}
\newtheorem{lemma}[theorem]{Lemma}
\newtheorem{corollary}[theorem]{Corollary}
\newtheorem{proposition}[theorem]{Proposition}
\newdefinition{definition}{Definition}
\newdefinition{hypothesis}{Hypothesis}
\newdefinition{remark}{Remark}
\newproof{proof}{Proof}
\newproof{sketch}{Sketch of the Proof}



\def\R{\bm{\mathbbm{R} } }

\def\Hdiv{\mathbf{H}_{\Div}}

\def\ind{\bm {\mathbbm{1} } }


 

%

%
\def\xthilde{\widetilde{\mathbf{x}}}

\def\A{{\mathcal A}}
\def\B{{\mathcal B}}
\def\C{{\mathcal C}}

\def\E{\mu}
\def\F{{\mathbf F}}
\def\Q{{\mathcal Q}}

\def\W{\mathbf W}
\def\H{\mathbf H}
\def\X{\mathbf X}
\def\Y{\mathbf Y}
\def\0{\mathbf 0}
\def\f{\mathbf f}
\def\g{\mathbf g}
\def\n{\mathbf { n}  }

\def\u{\mathbf u}
\def\v{\mathbf v}
\def\w{\mathbf w}
\def\x{\mathbf x}
\def\y{\mathbf y}
\def\div{\boldsymbol{\nabla} \cdot}
\def\grad{\boldsymbol{\nabla}}

\def\:{\negthinspace : \negthinspace}

\def\chix{{\raise.5ex\hbox{$\chi$}}}

\def\Re{\mbox{\rm I\negthinspace R}}
\def\Co{\mbox{C\kern-.47em\vrule height1.5ex width.2ex}}


\def\L2div{\mathbf{L^{2}_{div}}}
\def\H1bold{\mathbf{H^{1}}}
\def\vepsone{\mathbf{v}^{1,\epsilon}}
\def\vepstwo{\mathbf{v}^{\,2,\epsilon}}
\def\pepsone{p^{1,\epsilon}}
\def\pepstwo{p^{\,2,\epsilon}}

\def\vtaneps{\mathbf{v}_{\scriptscriptstyle T}^{\,2,\epsilon}}
\def\vnormeps{\mathbf{v}_{\scriptscriptstyle N}^{\,2,\epsilon}}
%

%

%
%
\def\wtan{\mathbf{w}_{\scriptscriptstyle T}^{2}}
\def\wnorm{\mathbf{w}_{\scriptscriptstyle N}^{2}}

%

%
%

%

\def\vtan{\mathbf{v}_{\scriptscriptstyle T}^{2}}
\def\vnorm{\mathbf{v}_{\scriptscriptstyle N}^{2}}

\def\vone{\mathbf{v}^{1}}
\def\vtwo{\mathbf{v}^{2}}
\def\pone{p^{1}}
\def\ptwo{p^{2}}
\def\veps{\mathbf{v}^{\epsilon}}
\def\peps{p^{\,\epsilon}}

\def\uone{\mathbf{u}^{1}}

\def\utan{\mathbf{u}_{\scriptscriptstyle T}^{2}}
\def\unorm{\mathbf{u}_{\scriptscriptstyle N}^{2}}

\def\symgrad{\bm{\mathcal{E} } }
\def\Hpartial{H(\partial_z, \Omega_{2})}

\def\Vomega1{\mathbf{V}_{\,\Omega_1}}

%

%


%
\def\xthilde{\widetilde{x}}

\def\defining{\overset{\mathbf{def}} =}

\def\L2div{\mathbf{L^{2}_{div}}}
\def\H1bold{\mathbf{H^{1}}}

%
%

\def\wone{\mathbf{w}^{1}}
\def\wtwo{\mathbf{w}^{2}}
%

%
%

%


%

%

%

%

\def\pone{p^{1}}
\def\ptwo{p^{2}}
\def\veps{\mathbf{v}^{\,\epsilon}}
\def\peps{p^{\,\epsilon}}

\def\uone{\mathbf{u}^{1}}

\def\Vomega1{\mathbf{V}_{\,\Omega_1}}

%

%

%

%
%

%


%
\def\xthilde{\widetilde{x}}

\begin{document}

\begin{frontmatter}

\title{A Darcy-Brinkman Model of Fractures in Porous Media}
\tnotetext[mytitlenote]{This material is based upon work supported by grant 98089 from the Department of Energy, Office of Science, USA and from project HERMES 27798 from Universidad Nacional de Colombia,
Sede Medell\'in.}


\author[mymainaddress]{Fernando A Morales}

\cortext[mycorrespondingauthor]{Corresponding Author}
\ead{famoralesj@unal.edu.co}

\address[mymainaddress]{Escuela de Matem\'aticas
Universidad Nacional de Colombia, Sede Medell\'in \\
Calle 59 A No 63-20 - Bloque 43, of 106,
Medell\'in - Colombia}

\author[mysecondaryaddress]{Ralph E Showalter}
\address[mysecondaryaddress]{Department of Mathematics, Oregon State University. Kidder Hall 368, OSU, Corvallis, OR 97331-4605}

\begin{abstract}
For a fully-coupled Darcy-Stokes system describing the exchange of
fluid and stress balance across the interface between a saturated
porous medium and an open very narrow channel, the limiting problem is
characterized as the width of the channel converges to zero. It is
proven that the limit problem is a fully-coupled system of Darcy flow
in the porous medium with Brinkman flow in tangential coordinates of
the lower dimensional interface.
\end{abstract}

\begin{keyword}
porous media, heterogeneous, fissures, coupled Darcy-Stokes systems, Brinkman system
\MSC[2010] 35K50 \sep 35B25  \sep 80A20 \sep 35F15
\end{keyword}
\end{frontmatter}



%
%

%
%
%
%
\section{Introduction}   \label{intro}
We consider the limiting form of a system of equations describing
incompressible fluid flow in a fully-saturated region
$\Omega^\epsilon$ which consists of two parts, a porous medium
$\Omega_1$ and a very narrow channel $\Omega_{2}^\epsilon$ of width
$\epsilon > 0$ along part of its boundary, $\Gamma = \partial \Omega_1
\cap \partial \Omega_2^\epsilon$. That is, we have $\Omega^\epsilon
\equiv \Omega_{1} \cup \Gamma \cup \Omega_{2}^\epsilon$. The
filtration flow in the porous medium is governed by Darcy's law on
$\Omega_1$ and the faster flow of the fluid in the narrow open channel
by Stokes' system on $\Omega_{2}^\epsilon$.  For simplicity, we assume
that the channel is {\em flat}, that is, $\Omega_{2}^{\epsilon} \equiv
\Gamma \times (0,\epsilon)$, where $\Gamma \subset \Re^{n-1}$,
$\Re^{n-1}$ is identified with $\Re^{n-1} \times \{0\} \subset
\Re^{n}$, and $\Gamma = \partial \Omega_{1} \cap \partial
\Omega_{2}^\epsilon$ is the interface. See Figure \ref{Fig Domain
Original}. We assume that $\partial \Omega_{1} - \Gamma$ is
smooth. The Darcy and Stokes systems have very different regularity
properties, and both the tangential velocity and pressure of the fluid
are discontinuous across the interface, so the analysis is delicate.
Our goal is to establish the existence of a limit problem as the width
$\epsilon \to 0$ and to characterize it.  This limit is a
fully-coupled system consisting of Darcy flow in the porous medium
$\Omega_1$ and Brinkman flow on the part $\Gamma$ of its boundary.
%
%
\begin{figure}[h] 
        \centering
                {\resizebox{7cm}{7.8cm}
                        {\includegraphics{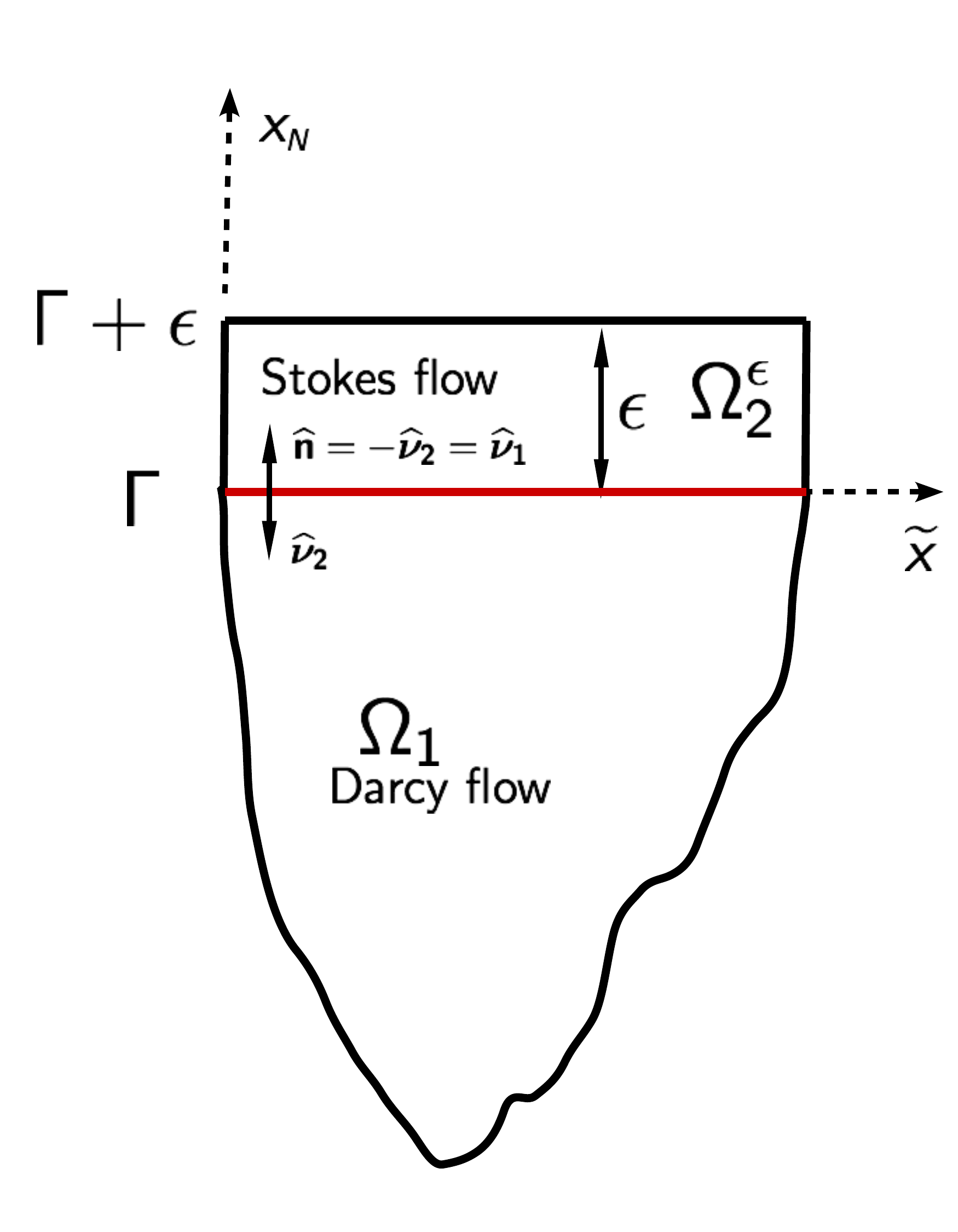} } }
        \caption{The porous medium $\Omega_1$ below the thin channel $\Omega_2^\epsilon$.}
\label{Fig Domain Original}
\end{figure}
%

%

The Darcy-Stokes system above has two types of singularities: the
geometric one coming from the narrowness of the channel
$\mathcal{O}(\epsilon)$ with respect to the dimensions, and the
physical one of high fluid flow velocity $\mathcal{O}(1/\epsilon)$ in
the channel with respect to the porous medium. These singularities
introduce multiple scales in the system which have an impact on the
numerical simulation. Some of these consequences are ill-conditioned
matrices, problems of numerical stability, poor quality of the
numerical solutions and high computational costs. On the other hand,
ignoring the presence of fractures leads to oversimplified and
unrealistic models \cite{ArbLehr, Gunzburger3}. Therefore, much
progress has been made to handle such issues from the numerical point
of view \cite{ArboBru, Gunzburger1, Gunzburger2, JaffRob05,
GaticaBabuska2010, Gatica2009, WeiMcD07}, from the analytical point of
view \cite{ArbLehr, Allaire2009, CannonMeyer71, ShowMor10, ShowMor12,
SPHung74}, from the heuristic point of view \cite{Levy83, Morales1},
and for numerical experimental coupling \cite{XieXuXue, Lamichhane,
Shavit03} by using Brinkman flow to couple numerically the Darcy and
Stokes flow models.  See especially Quarteroni et al
\cite{Quarteroni_Brinkman2011} for additional issues, references and
perspectives.

The Brinkman system has nothing to do with the usual models of porous
media flow, but rather describes Stokes flow through a sparse array of
particles for which the porosity is more than $0.8$
\cite{Brink47a,Brink47b,Nield83}. This requirement is highly restrictive since
most naturally occurring porous media have a porosity less than $0.6$.
Levy \cite{Levy81,Levy83} showed that the Brinkman system holds only
for arrays of particles whose size is precisely of order $\eta^3$,
where $\eta <<1$ is the distance between neighboring particles. Larger
particles impede the fluid flow sufficiently to be described by a
Darcy system, and smaller particles do not change the flow from the
Stokes system.  Allaire \cite{Allaire92,All97} proved and developed
this homogenization result by means of two-scale convergence.
But in the situations considered here the singular geometry of the
problem with small $\epsilon > 0$ keeps all of the fluid in the
channel very close to the interface where it is slowed by viscous
resistance forces from the porous medium. This suggests that there is
a very narrow region along the interface between Stokes flow and a
porous medium where the fluid velocity is well approximated by a
Brinkman law in the tangential coordinates. (Of course, the normal
component of velocity is determined independently by the conservation
of fluid mass across the interface.) The convergence of the
$\epsilon$-model established below provides an explanation for the
success of numerical approximations that use an intermediate Brinkman
system to connect Darcy and Stokes flows across an interface by
adjusting the coefficients.

Our model describes two fundamental situations. The first is the rapid
tangential flow near the boundary of a porous medium where the
porosity becomes large due to the inefficiency of the packing of the
particles of the medium. If the particles in this {\em boundary
channel} are sufficiently sparse, the less impeded flow begins to
follow this Stokes-like model in the substantial space between
particles. See Nield \& Bejan \cite{NieldBejan} for additional
discussion and perspectives.
The second and more common situation is obtained by reflecting
$\Omega^\epsilon$ about the outer wall of the channel,
$\Gamma\times\{\,\epsilon\,\}$. This provides a model for a narrow
{\em interior fracture} of width $2\,\epsilon$ in a porous medium.
(See Remark~\ref{rem interface options}.) Such a fracture is assumed
to be open, so fluid flow follows the Stokes system; debris-filled
fractures have been modeled as regions of Darcy flow with very high
permeability \cite{CannonMeyer71,JaffRob05,ShowMor10, ShowMor12}.  In
the limiting problem below, the fracture is described by Brinkman flow
in tangential coordinates coupled on {\em both} sides to the
surrounding Darcy flow of the porous medium.

In this work we present the full asymptotic analysis for this coupled
Darcy-Stokes system in order to derive a new model, free of
singularities. The {\em limit problem} consists of a Darcy-Brinkman
fully coupled system with Darcy flow on the original porous medium and
Brinkman flow on the surface approximating the adjacent channel or
internal fracture; see Figure \ref{Fig Domain Scaling}. The spaces of
convergence will be found and the convergence of solutions will be
extablished. It is worthwhile to stress that the method is remarkably
simple with respect to other techniques as it uses only scaling,
standard weak convergence methods and general Hilbert space theory. It
is precisely this {\em simplicity} that gives the method its power and
success in handling simultansously the asymptotic analysis, the
multiple scales and the substantially different structures of Darcy
and Stokes systems. In particular, we obtain explicitly the
correspondence between the coefficients in the Beavers-Joseph-Saffman
interface condition and those in the limiting Brinkman system.

\subsection*{Notation}
We shall use standard function spaces (see \cite{Temam79, Adams}).
For any smooth bounded region $G$ in $\R^N$ with boundary $\partial
G$, the space of square integrable functions is denoted by $L^{2}(G)$,
and the Sobolev space $H^{1} (G) $ consists of those functions in
$L^2(G)$ for which each of the first-order weak partial derivatives
belongs to $L^2(G)$. The {\em trace} is the continuous linear
function $\gamma:H^{1}(G) \rightarrow L^{2}(\partial G)$ which agrees
with restriction to the boundary, i.e., $\gamma(w) = w \big\vert_{\partial
G}$ on smooth functions. Its kernel is $H_{0}^{1}(G) \defining \{w \in
H^{1}(G): \gamma(w) = 0\}$.  The trace space is $H^{1/2}(\partial G)
\defining \gamma(H^{1}(G))$, the range of $\gamma$ endowed with the
usual norm from the quotient space $H^{1}(G)/H_{0}^{1}(G)$, and we
denote by $H^{-1/2}(\partial G)$ its topological dual.
Column vectors and corresponding vector-valued functions will be
denoted by boldface symbols, e.g., we denote the product space
$\big[L^2(G)\big]^N$ by $\mathbf{L}^{2}(G)$ and the respective N-tuple
of Sobolev spaces by $\mathbf{H}^{1}(G) \defining
\big[H^{1}(G)\big]^N$.  Each $w \in H^{1}(G)$ has {\em gradient}
$\grad w = \big(\frac{\partial w}{\partial x_{1}}, \ldots,
\frac{\partial w}{\partial x_{\scriptscriptstyle N}} \big) \in
\mathbf{L}^{2}(G)$.
We shall also use the space $\Hdiv(G)$ of vector functions $\w \in
{\mathbf L}^2(G)$ whose weak divergence $\div \w$ belongs to
$L^{2}(G)$.
Let $\n$ be the unit outward normal vector on $\partial G$.  If $\w$
is a vector function on $\partial G$, we denote its normal component
by $w_{n} = \gamma(\w)\cdot\n$ and the normal projection by $w_{n}
\n$. The tangential component is $\wtan = \w - w_{n}\, {\n}$.  For the
functions $\w \in \Hdiv(G)$, there is a {\em normal trace} defined on
the boundary values, which will be denoted by $\w \cdot \n \in
H^{-1/2}(\partial G)$. For those $\w \in \mathbf{H}^{1}(G)$ this
agrees with $\gamma(\w) \cdot \n$.
Greek letters are used to denote general second-order tensors.  The
contraction of two tensors is given by $\sigma : \tau = \sum_{i, \,
j}\, \sigma_{ij}\tau_{ij}$.
For a tensor-valued function $\tau$ on $\partial G$, we denote the
normal component (vector) by $\tau(\n) \defining \sum_{j} \,
\tau_{ij}\, \n_{j} \in \R^N$, and its normal and tangential parts by
$\big(\tau(\n)\big)\cdot\n = \tau_{n} \defining \sum_{i, \, j} \,
\tau_{ij} \n_{i} \n_{j}$ and $\ {\mathbf \tau}(\n)_{\scriptscriptstyle T}
\defining \tau (\n) - \tau_{n} \n$, respectively.
For a vector function $\w \in \mathbf{H}^{1}(G)$, the tensor $(\grad \w)_{ij} =
\frac{\partial w_i}{\partial x_j}$ is the {\em gradient} of $\w$ and
$\big(\symgrad(\w)\big)_{i j} = \tfrac{1}{2} \big(\frac{\partial w_i}{\partial x_j} +
\frac{\partial w_j}{\partial x_i} \big)$ is the {\em symmetric
gradient}.

Next we describe the geometry of the domains to be used in the present
work; see Figure \ref{Fig Domain Original} for the case $N = 2$.
The disjoint bounded domains $\Omega_{1}$ and $\Omega_{2}^{\epsilon}$
in $\R^{N}$ share the common {\em interface}, $\Gamma \defining
\partial \Omega_{1} \cap \partial \Omega_{2}^{\epsilon} \subset
\R^{N-1} \times \{0\}$, and we define $\Omega^\epsilon \defining
\Omega_1 \cup \Gamma \cup \Omega_2^\epsilon$.  For simplicity of
notation we have assumed that the interface is flat and, moreover,
that the domain $\Omega_{2}^{\epsilon}$ is a cylinder:
$\Omega_{2}^{\epsilon}\defining \Gamma\times (0,\epsilon)$.  We denote
by $\n(\cdot)$ the unit outward normal vector on $\partial \Omega_{1}$
and on $\partial \Omega_{2}^{\epsilon} - \Gamma$.  The domain
$\Omega_{1}$ is the porous medium, and $\Omega_{2}^{\epsilon}$ is the
free fluid region. We focus on the case where $\Omega_{2}^{\epsilon}$
is the lower half of a symmetric narrow horizontal fracture of width
$\epsilon$, $0 < \epsilon \ll 1$, and $\Omega_{1}$ is the porous
medium below the fracture. By modifying boundary conditions on $\Gamma
+ \epsilon$, we recover the case of a free-fluid region adjacent to (a
flat part of) $\partial \Omega_1$.

For a column vector $\x =
\big(x_{\,1},\,\ldots,\,x_{\scriptscriptstyle N -
1},\,x_{\scriptscriptstyle N}\big)\in \R^{N}$ we denote the
corresponding vector in $\R^{N-1}$ consisting of the first $N-1$
components by $\widetilde{\x}=
\big(x_{1},\,\ldots,\,x_{\scriptscriptstyle N - 1}\big)$, and we
identify $\R^{N-1} \times \{0\}$ with $\R^{N-1}$ by $\x =
(\widetilde{\x},x_{\scriptscriptstyle N})$.
For a vector function $\w$ on $\Gamma$ we see $\w_{\scriptscriptstyle
T} = \widetilde{\w}$ is the first $N-1$ components and
$\w_{\n} = w_{\scriptscriptstyle N}$ is the last
component of the function.  The operators $\grad_{\!T}$,
$\grad_{\!T}\boldsymbol{\cdot}$ denote respectively the
$\R^{N-1}$-gradient and the $\R^{N-1}$-divergence in directions
tangent to $\Gamma$, i.e. $\grad_{\!T} = \big(\frac{\partial}{\partial
x_{1}}, \ldots, \frac{\partial}{\partial x_{\scriptscriptstyle N - 1}}
\big)$, $\grad_{\!T}\boldsymbol{\cdot} = \big(\frac{\partial}{\partial
x_{1}}, \ldots, \frac{\partial}{\partial x_{\scriptscriptstyle N - 1}}
\big) \boldsymbol{\cdot}$.
%

\subsection{The Equations}
We determine the fluid flow through the porous medium $\Omega_{1}$ by
the {\em Darcy system}, i.e.
\begin{subequations}\label{darcy_sys} 
\begin{equation}
\div \vone = h_1 \,,
\end{equation}
\begin{equation}\label{darcy}
\Q \,\vone +  \grad \pone = \0 \,, \quad \text{in} \, \Omega_{1} .
\end{equation}
\end{subequations}
The functions $\pone$, $\vone$ are respectively, the {\em pressure} and {\em
filtration velocity} of the incompressible viscous fluid in the pores.
The resistance tensor $\Q$ is the shear viscosity $\mu$ of the fluid times
the reciprocal of the {\em permeability} of the structure.
The flow of the fluid in the adjacent open channel
$\Omega_{2}^{\epsilon}$ is described by the {\em Stokes system}
\cite{Temam79, SP80}
\begin{subequations}\label{Stokes system}
\begin{equation}\label{Eq Divergence free condition}
\grad\cdot \vtwo = 0 ,
\end{equation}
\begin{equation}\label{Momentum fluid}
-\grad\cdot\sigma^{2}+\grad \ptwo = \f_{2} ,
\end{equation}
\begin{equation}\label{Newtonian fluid}
\sigma^{2} = 2\,\epsilon\,\E\, \symgrad(\vtwo)\,,
\quad\text{in}\;\Omega_{2}^{\epsilon} .
\end{equation}
\end{subequations}
Here, $\vtwo$, $\sigma^{2}$, $\ptwo$ are respectively, the {\em
velocity}, \textit{stress tensor} and the \textit{pressure} of the
fluid in $\Omega_{2}^{\epsilon}$, while $\symgrad(\vtwo)$ denotes the
symmetric gradient of the velocity field. Among the equations above,
only \eqref{darcy} and \eqref{Newtonian fluid} are constitutive and
subject to scaling. Darcy's law \eqref{darcy} describes the fluid on the
part of the domain with fixed geometry, hence, it is not scaled. The
law \eqref{Newtonian fluid} establishes the relationship between the
strain rate and the stress for the fluid in the thin channel,
therefore it is scaled according to the geometry. Finally, recalling
that $\grad\cdot \vtwo = 0$, we have
\begin{equation}\label{Eq stress divergence to laplacian velocity}
\grad\cdot\sigma^{2} = 
\grad\cdot \big[2\,\epsilon\,\E\,\symgrad\big(\vtwo\big) \big]
= \epsilon\,\E \grad\cdot\grad\vtwo .
\end{equation}
This observation transforms the system \eqref{Eq Divergence free
condition}, \eqref{Momentum fluid} and \eqref{Newtonian fluid} to the
classical form of {\em Stokes flow} system.

%
\subsection{Interface Conditions}
The interface coupling conditions account for the stress and mass
conservation.  For the stress balance, the tangential and normal
components are given by the {\em Beavers-Joseph-Saffman}
\eqref{Beavers - Joseph -Saffman condition} and the classical {\em
Robin} boundary condition \eqref{Eq Interface Normal Stress Balance}
respectively, i.e.
\begin{subequations}\label{Eq Interface Stress Balance}
\begin{equation}\label{Beavers - Joseph -Saffman condition}
\sigma_{\scriptscriptstyle T}^{2} =  
\epsilon^{2}\,\beta \, \sqrt{\Q}\, \vtan \,,
\end{equation}
\begin{equation}\label{Eq Interface Normal Stress Balance}
\sigma_{\n}^{2} - \ptwo + \pone 
= \alpha\, \vone \cdot \n   \text{ on } \Gamma .
\end{equation}
In the expression \eqref{Beavers - Joseph -Saffman condition} above,
$\epsilon^{2}$ is a scaling factor destined to balance out the
geometric singularity introduced by the thinness of the channel.  In
addition, the coefficient $\alpha \ge 0$ in \eqref{Eq Interface Normal
Stress Balance} is the {\em fluid entry resistance}.
\end{subequations}
In the present work it is assumed that the velocity is curl-free on
the interface, so the conditions \eqref{Beavers - Joseph
-Saffman condition} and \eqref{Eq Interface Normal Stress Balance}
are equivalent to 
\begin{subequations}\label{EQ interface conditions}
\begin{equation}\label{Beavers - Joseph condition}
\epsilon\,\mu\, \frac{\partial}{\partial\, \n}\,\vtan 
= \epsilon\,\mu\,\frac{\partial}{\partial\, x_{\scriptscriptstyle N}}\,\vtan 
= \epsilon^{2} \beta \,  \sqrt{\Q}\, \vtan \,,
\end{equation}
\begin{equation}  \label{scaled interface normal stress}
\epsilon\,\mu\Big(\frac{\partial \,\vtwo}{\partial\,\n}\cdot\n\Big)-p^{2}+p^{1}
=\epsilon\,\mu\, \frac{\partial \,\vnorm}{\partial\,x_{\scriptscriptstyle N}}-p^{2}+p^{1}
=\alpha\,\vone\cdot\n \text{ on } \Gamma .
\end{equation}
The conservation of fluid across the interface gives the normal fluid
flow balance
\begin{equation}\label{admissability constraint}
\vone \cdot \n = \vtwo \cdot \n  \text{ on } \Gamma .
\end{equation}
\end{subequations}
The {\em interface conditions} \eqref{EQ interface conditions}
will suffice precisely to
couple the Darcy system \eqref{darcy_sys} in $\Omega_{1}$ to the
Stokes system \eqref{Stokes system} in $\Omega_{2}^{\epsilon}$.
%
%
\subsection{Boundary Conditions}
We choose the {\em boundary conditions} on $\partial\,
\Omega^{\epsilon} = \partial \Omega_{1} \cup \partial
\Omega_{2}^{\epsilon} - \Gamma$ in a classical simple form, since they
play no essential role here.  On the exterior boundary of the porous
medium, $\partial \Omega_{1} - \Gamma$, we impose the {\em drained}
conditions,
\begin{subequations}\label{BC Top boundary condition}
\begin{equation}\label{BC Drained}
\pone = 0 \, \quad \text{on }\, \partial \Omega_{1} - \Gamma .
\end{equation}
As for the exterior boundary of the free fluid, $\partial \,\Omega_{2}^\epsilon
- \Gamma$, we choose no-slip conditions on the wall of the cylinder,
\begin{equation}\label{BC Null Flux}
\vtwo=0 \, \quad \text{on }\, \partial \Gamma\times(0,\epsilon) .
\end{equation}
On the top of the cylinder
$\Gamma+\epsilon\defining\big\{(\widetilde{x},
\epsilon):\widetilde{x}\in \Gamma\big\}$, the hyper-plane of symmetry,
we have mixed boundary conditions, a Neumann-type condition on the
tangential component of the normal stress
\begin{equation}\label{BC tangential boundary condition}
\frac{\partial \,\vtwo}{\partial\,\n}
- \Big(\frac{\partial \,\vtwo}{\partial\,\n}\cdot\n\Big)\n
=\frac{\partial \,\vtan}{\partial\,x_{\scriptscriptstyle N}} 
= 0\quad\text{on}\;\Gamma+\epsilon ,
\end{equation}
and a null normal flux condition, i.e. 
\begin{equation}\label{BC normal boundary condition}
\vtwo\cdot\n =
\vnorm = 0 \quad\text{on}\;\Gamma+\epsilon .
\end{equation}
\end{subequations}
\begin{remark}  \label{rem interface options}
The boundary conditions \eqref{BC Null Flux} and \eqref{BC tangential
boundary condition} are appropriate for the mid-line of an {\em
internal fracture} with symmetric geometry. In that case, the
interface conditions \eqref{EQ interface conditions} hold on both
sides of the fracture. If $\Omega_{2}^\epsilon$ is an adjacent open
channel along the boundary of $\Omega_1$, then we extend the no-slip
condition \eqref{BC Null Flux} to hold on all of $\partial
\,\Omega_{2}^\epsilon - \Gamma$.
\end{remark}
\begin{remark}\label{Rem Scaling Coments}
For a detailed exposition on the system's adopted scaling namely, the
fluid stress tensor \eqref{Newtonian fluid} and the
Beavers-Joseph-Saffman condition \eqref{Beavers - Joseph -Saffman
condition}, together with the \textbf{formal} asymptotic analysis see
\cite{Morales1}.
\end{remark}
%

\subsection*{Preliminary Results}
We  close this section by recalling some classic results.
\begin{lemma}\label{Th Surjectiveness from Hdiv to H^1/2}
Let $G \subset \R^{N}$ be an open set with Lipschitz boundary, let
$\n$ be the unit outward normal vector on $\partial G$.  The normal
trace operator $\u \in \Hdiv(G) \mapsto \u \cdot \n \in
H^{-1/2}(\partial G)$ is defined by
\begin{equation}\label{Eq Normal Trace Definition}
\big\langle \u\cdot \n, \phi \big\rangle
_{\scriptscriptstyle H^{-1/2}(\partial G), \, H^{1/2}(\partial G)}
\defining 
\int_{G} \Big(\u\cdot \grad \phi + \div \u \, \phi \big)\, dx ,
\quad \phi \in H^{1}(G).
\end{equation}
For any $g\in H^{-1/2}(\partial G)$ there exists $\u\in \Hdiv(G)$ such
that $\u\cdot \n = g$ on $\partial G$ and $\Vert \u \Vert_{\Hdiv
(G)}\leq K \Vert g \Vert_{H^{-1/2}(\partial G)}$, with $K$ depending
only on the domain $G$. In particular, if $g$ belongs to
$L^{2}(\partial G)$, the function $\u$ satisfies the estimate $\Vert \u
\Vert_{\Hdiv (G)}\leq K \Vert g \Vert_{0,\partial G}$.
\end{lemma}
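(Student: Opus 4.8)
The plan is to realize the desired field $\u$ as the gradient of the solution of an auxiliary coercive Neumann-type problem on $G$, and then to read off the normal trace directly from the defining formula \eqref{Eq Normal Trace Definition}. First I would introduce the bilinear form $a(u,v) \defining \int_{G}(\grad u \cdot \grad v + u\,v)\,dx$ on $H^{1}(G)$, which is bounded and $H^{1}(G)$-coercive, and, for the given $g \in H^{-1/2}(\partial G)$, the linear functional $v \mapsto \langle g, \gamma v\rangle_{H^{-1/2}(\partial G),\,H^{1/2}(\partial G)}$; this functional is bounded on $H^{1}(G)$ precisely because the trace $\gamma : H^{1}(G) \to H^{1/2}(\partial G)$ is continuous, with $|\langle g,\gamma v\rangle| \le \Vert g\Vert_{H^{-1/2}(\partial G)}\,\Vert\gamma\Vert\,\Vert v\Vert_{H^{1}(G)}$. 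By the Lax--Milgram theorem there is a unique $u \in H^{1}(G)$ with $a(u,v) = \langle g,\gamma v\rangle$ for all $v \in H^{1}(G)$, and $\Vert u\Vert_{H^{1}(G)} \le C\,\Vert g\Vert_{H^{-1/2}(\partial G)}$ with $C$ depending only on $G$ (the coercivity and continuity constants of $a$ and of $\gamma$).

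Next I would set $\u \defining \grad u \in \mathbf{L}^{2}(G)$ and check $\u \in \Hdiv(G)$. Restricting the variational identity to $v \in H_{0}^{1}(G)$, and using density of $C_{c}^{\infty}(G)$ there, gives $\int_{G}\grad u\cdot\grad v\,dx = -\int_{G} u\,v\,dx$, i.e. $\Delta u = u$ in the sense of distributions; since $u \in L^{2}(G)$ this means $\div\u = \div\grad u = u \in L^{2}(G)$, so $\u \in \Hdiv(G)$ with $\Vert\u\Vert_{\Hdiv(G)}^{2} = \Vert\grad u\Vert_{0,G}^{2} + \Vert u\Vert_{0,G}^{2} = \Vert u\Vert_{H^{1}(G)}^{2} \le C^{2}\Vert g\Vert_{H^{-1/2}(\partial G)}^{2}$.

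Then I would verify $\u\cdot\n = g$. Using the definition \eqref{Eq Normal Trace Definition} of the normal trace, for every $\phi \in H^{1}(G)$,
\[
\big\langle \u\cdot\n,\phi\big\rangle_{H^{-1/2}(\partial G),\,H^{1/2}(\partial G)}
= \int_{G}\big(\u\cdot\grad\phi + \div\u\,\phi\big)\,dx
= \int_{G}\big(\grad u\cdot\grad\phi + u\,\phi\big)\,dx
= a(u,\phi) = \langle g,\gamma\phi\rangle,
\]
so $\u\cdot\n = g$ in $H^{-1/2}(\partial G)$, and taking $K \defining C$ yields the first estimate. For the last assertion, if $g \in L^{2}(\partial G)$ one uses the continuous embedding $L^{2}(\partial G) \hookrightarrow H^{-1/2}(\partial G)$ (dual to $H^{1/2}(\partial G) \hookrightarrow L^{2}(\partial G)$), so $\Vert g\Vert_{H^{-1/2}(\partial G)} \le C'\Vert g\Vert_{0,\partial G}$ and hence $\Vert\u\Vert_{\Hdiv(G)} \le K\,\Vert g\Vert_{0,\partial G}$ after enlarging $K$.

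I do not expect a serious obstacle here: once the auxiliary problem is chosen, everything is routine. The only points needing a little care are (i) the passage from the variational identity tested against $H_{0}^{1}(G)$ to the distributional equation $\Delta u = u$, which is exactly what places $\u$ in $\Hdiv(G)$, and (ii) recognizing that the abstract object prescribed in the statement is precisely the pairing in \eqref{Eq Normal Trace Definition}, so that the computation above closes the argument. The Lipschitz regularity of $\partial G$ is used only to ensure that $\gamma$ and the spaces $H^{\pm1/2}(\partial G)$ have the stated properties.
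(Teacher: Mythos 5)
Your proof is correct and is essentially the standard construction used for this result. The paper itself does not give an argument here: it simply cites Lemma 20.2 of Tartar's book, and the proof there rests on exactly the same device you use, namely solving the coercive auxiliary Neumann problem $a(u,v)=\langle g,\gamma v\rangle$ via Lax--Milgram and then taking $\u=\grad u$, with the identity $\div\u=u\in L^2(G)$ coming from testing against $H^1_0(G)$. Your verification that $\u\cdot\n=g$ from \eqref{Eq Normal Trace Definition} and the reduction of the $L^2(\partial G)$ case to the continuous embedding $L^2(\partial G)\hookrightarrow H^{-1/2}(\partial G)$ are both sound, so there is nothing to add.
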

\begin{proof}
See Lemma 20.2 in \cite{TartarSobolev}.
\qed
\end{proof}

We shall recall in Section 2 that the boundary-value problem
consisting of the Darcy system \eqref{darcy_sys}, the Stokes system
\eqref{Stokes system}, the interface coupling conditions \eqref{EQ
interface conditions} and the boundary conditions \eqref{BC Top
boundary condition} can be formulated as a constrained minimization
problem.  Let $\X$ and $\Y$ be Hilbert spaces and let $\A:
\X\rightarrow \X '$, $\B: \X\rightarrow \Y '$ and $\C: \Y\rightarrow
\Y '$ be continuous linear operators. The problem is to find a pair satisfying
\begin{equation}\label{Pblm operators abstrac system}
\begin{split}
(\x, \y)\in \X\times \Y: \quad 
\A \x + \B '\y  = F_{1}\quad \text{in}\; \X ' , 
\\
- \B \x  + \C \y = F_{2} \quad \text{in}\; \Y ' 
\end{split}
\end{equation}
with $F_{1}\in \X '$ and $F_{2} \in \Y '$.  We present a well-known
result \cite{GirRav79} to be used in this work.
\begin{theorem}\label{Th well posedeness mixed formulation classic}
Assume that the linear operators $\A: \X\rightarrow \X'$, $\B:
\X\rightarrow \Y '$, $\C: \Y\rightarrow \Y '$ are continuous and
\begin{enumerate}[(i)]
\item $\A$ is non-negative and $\X$-coercive on $\ker (\B)$,

\item $\B$ satisfies the inf-sup condition 
\begin{equation}\label{Ineq general inf-sup condition}
   \inf_{\y \, \in \, \Y} \sup_{\x \, \in \, \X}
   \frac{\vert  \B\x(\y) \vert }{\Vert \x\Vert_{\X}\, \Vert \y \Vert_{\Y}}  >0\,,
\end{equation}

\item $C$ is non-negative and symmetric.
\end{enumerate}
Then, for every $F_{1} \in \X '$ and $F_{2} \in \Y '$ the problem
\eqref{Pblm operators abstrac system}
has a unique solution $(\x, \y)\in \X \times \Y$, and it satisfies the
estimate
\begin{equation} \label{mix-est}
\Vert\x\Vert_{\X} + \Vert \y\Vert_{\Y} \leq c\, (\Vert F_{1}\Vert_{\X '} 
+ \Vert F_{2}\Vert_{\Y '})
\end{equation}
for a positive constant $c$ depending only on the preceding
assumptions on $\A$, $\B$, and $\C$.
\end{theorem}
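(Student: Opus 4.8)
The plan is to reduce \eqref{Pblm operators abstrac system} to a single coercive variational problem on the kernel of $\B$ and then recover the $\y$-component via the inf-sup condition. First I would observe that by assumption (iii) the operator $\C$ is non-negative and symmetric, hence it induces a bilinear form $c(\y,\y') = \langle \C\y,\y'\rangle$ which is a (possibly degenerate) semi-inner product; this will be used only for monotonicity, not for coercivity, so no invertibility of $\C$ is needed. The key algebraic step is to eliminate $\y$: formally the second equation reads $\C\y = F_2 + \B\x$, which cannot be solved for $\y$ directly since $\C$ need not be invertible. Instead I would work with the full product space and define on $\X\times\Y$ the bilinear form $\mathcal{B}\big((\x,\y),(\x',\y')\big) = \langle\A\x,\x'\rangle + \langle\B'\y,\x'\rangle - \langle\B\x,\y'\rangle + \langle\C\y,\y'\rangle$, noting that the skew terms cancel on the diagonal so that $\mathcal{B}\big((\x,\y),(\x,\y)\big) = \langle\A\x,\x\rangle + \langle\C\y,\y\rangle \ge 0$ by (i) and (iii). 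This identifies the problem as a linear variational problem governed by a continuous bilinear form whose symmetric part is non-negative.

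Next I would invoke the standard Babu\v{s}ka--Brezzi / Ladyzhenskaya theory for saddle-point problems exactly in the form given in \cite{GirRav79}. Concretely: let $V = \ker(\B) \subseteq \X$. By (i), $\A$ is $\X$-coercive on $V$, so for the restricted problem ``find $\x_0\in V$ with $\langle\A\x_0,\x'\rangle = \langle F_1,\x'\rangle$ for all $\x'\in V$'' the Lax--Milgram lemma gives a unique $\x_0\in V$. The inf-sup condition (ii) is equivalent (by the closed range theorem) to $\B$ being surjective onto $\Y'$ with a bounded right inverse, so there exists $\x_1\in V^{\perp}$ (or any complement) with $\B\x_1 = F_2$ and $\|\x_1\|_{\X}\le C\|F_2\|_{\Y'}$; set $\x = \x_0 - \x_1$ after adjusting the first equation accordingly (the term $\A\x_1$ is moved to the right-hand side, which is legitimate since $\A$ is continuous). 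Then I would use (ii) once more, now to solve for $\y$: the functional $\x'\mapsto \langle F_1,\x'\rangle - \langle\A\x,\x'\rangle$ vanishes on $\ker(\B)$ by construction of $\x_0$, hence by the inf-sup condition it factors through $\B$, i.e. there is a unique $\y\in\Y$ with $\langle\B'\y,\x'\rangle = \langle F_1,\x'\rangle - \langle\A\x,\x'\rangle$ for all $\x'\in\X$, which is the first equation of \eqref{Pblm operators abstrac system}; the second holds automatically since $\B\x = F_2$ and we must still check $\C\y$ is consistent — here the role of $F_2\in\Y'$ already being fixed means the second equation reads $-F_2 + \C\y = F_2$? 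No: re-examining, the second equation is $-\B\x + \C\y = F_2$, and since $\B\x = F_2$ we would need $\C\y = 2F_2$, which is wrong, so the correct bookkeeping is to split $F_2$, not impose $\B\x_1 = F_2$; I would instead directly apply Theorem~1.1 of \cite{GirRav79} rather than rederive it, treating the present statement as a citation-backed restatement.

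The main obstacle, and the step I would be most careful about, is precisely this bookkeeping of the sign and the role of $\C$: because $\C$ is only non-negative (not coercive), one cannot naively eliminate $\y$, and the uniqueness of $\y$ comes not from $\C$ but from the inf-sup condition on $\B'$ (injectivity of $\B'$), while existence of $\y$ uses the orthogonality of the residual to $\ker\B$. I expect the cleanest route is therefore to verify that the hypotheses (i)--(iii) are verbatim those of the abstract saddle-point theorem in \cite{GirRav79} and to quote it, with the estimate \eqref{mix-est} following from the quantitative forms of coercivity and inf-sup (the constants $c$ depending only on the coercivity constant of $\A$ on $\ker\B$, the inf-sup constant of $\B$, and the continuity norms $\|\A\|$, $\|\B\|$, $\|\C\|$). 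If a self-contained argument is wanted instead, the decomposition $\x = \x_0 + \x_1$ with $\x_1$ chosen by the bounded right inverse of $\B$ so that $\B\x_1 = -F_2$ (note the sign from $-\B\x$ in the system), followed by Lax--Milgram on $\ker\B$ for $\x_0$ with modified data $F_1 - \A\x_1 + \ldots$, and then the inf-sup step for $\y$, carries everything through; the continuity of all three operators guarantees every modified right-hand side stays in the appropriate dual space with controlled norm, giving \eqref{mix-est}.
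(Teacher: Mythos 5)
The paper does not actually prove this theorem; it states it as ``a well-known result'' and cites Girault--Raviart \cite{GirRav79}. Your ultimate fallback --- verify that hypotheses (i)--(iii) match the abstract saddle-point theorem in \cite{GirRav79} and quote it --- is therefore exactly what the paper does, and that part of your proposal is fine.

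The self-contained sketch you append, however, contains a genuine gap that you half-notice and then paper over. You propose to pick $\x_1$ with $\B\x_1 = -F_2$, solve for $\x_0 \in \ker(\B)$ by Lax--Milgram, and then recover $\y$ from the first equation via the inf-sup condition. This works when $\C = 0$, but when $\C \neq 0$ the second equation $-\B\x + \C\y = F_2$ couples $\x$ and $\y$: if you force $\B\x = -F_2$, the second equation collapses to $\C\y = 0$, which is an extra constraint on $\y$ that the inf-sup step does nothing to enforce. You flag essentially this issue mid-paragraph (``we would need $\C\y = 2F_2$, which is wrong'') and correctly conclude one cannot naively eliminate $\y$, but your closing sentence then asserts the decomposition ``carries everything through,'' which contradicts your own observation. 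The honest route for $\C \neq 0$, only non-negative and symmetric, is either a Galerkin / regularization argument (replace $\C$ by $\C + \delta\,I$, solve, pass to the limit using the a priori bound from the diagonal test $\langle\A\x,\x\rangle + \langle\C\y,\y\rangle$) or a direct appeal to the theorem as in \cite{GirRav79} or \cite{Show10}. Your uniqueness reasoning, by contrast, is sound: testing the homogeneous system on the diagonal gives $\langle\A\x,\x\rangle + \langle\C\y,\y\rangle = 0$, non-negativity of $\C$ and symmetry give $\C\y = 0$, hence $\B\x = 0$, coercivity of $\A$ on $\ker(\B)$ gives $\x = 0$, and injectivity of $\B'$ from the inf-sup condition gives $\y = 0$.
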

Several variations of such systems have been extensively developed,
e.g., see \cite{Show10} for nonlinear degenerate and time-dependent
cases.

\section{A well-posed Formulation}  \label{sec-wellposed}
%
In this section we present a mixed formulation for the problem on the
domain $\Omega^\epsilon$ described in Section \ref{intro} and show it
is well-posed. In order to remove the dependence of the domain
$\Omega^\epsilon$ on the parameter $\epsilon > 0$, we rescale
$\Omega_2^\epsilon$ and get an equivalent problem on the domain
$\Omega^1$.  

The abstract problem is built on the function spaces
\begin{subequations}\label{Def Function Spaces}
\begin{equation}\label{Def Velocities Two}
\X_{2}^{\epsilon}\defining
\big\{\v\in\H1bold(\Omega_{2}^{\epsilon}):\v=0 \; \text{on}\,
\partial\,\Gamma\times(0,\epsilon),\,\v\cdot\n = 0\;\text{on}\,\Gamma+\epsilon\, \big\} ,
\end{equation}
\begin{equation}\label{Def Velocities Match}
\X^{\epsilon} \defining \big\{[\,\v^{1},\v^{2}\,]\in
\Hdiv(\Omega_{1})\times\X_{2}^{\epsilon}:
\v^{1}\cdot\n=\v^{2}\cdot\n
\;\text{on}\,\Gamma\big\} 
=
\big\{ \v \in \Hdiv(\Omega^\epsilon): \v^{2} \in \X_2^\epsilon \big\},
\end{equation}
\begin{equation}\label{Def Pressures}
\Y^{\epsilon}\defining
L^{2}(\Omega^{\epsilon}) ,
\end{equation}
\end{subequations}
endowed with
their respective natural norms. We shall use the following hypothesis.

\begin{hypothesis}\label{Hyp Bounds on the Coefficients}
It will be assumed that $\mu > 0$ and the coefficients $\beta$ and
$\alpha$ are nonnegative and bounded almost everywhere. Moreover, the
tensor $\Q$ is elliptic, i.e., there exists a $C_{\Q}> 0$ such that $(\Q
\,\x)\cdot \x \geq C_{\Q} \Vert \x\Vert^{2}$ for all $\x\in \R^{N}$.
\end{hypothesis}
\begin{proposition}\label{Th Weak Variational Formulation}
The boundary-value problem consisting of the equations \eqref{darcy_sys},
\eqref{Stokes system}, the interface coupling conditions \eqref{EQ
interface conditions} and the boundary conditions \eqref{BC Top
boundary condition} has the constrained variational formulation
\begin{subequations}\label{scaled problem}
\begin{flushleft}
$\big[\,\v^{\epsilon},p^{\epsilon}\,\big]\in\X^{\epsilon}\times\Y^{\epsilon}: $
\end{flushleft}
\vspace{-0.4cm}
\begin{align}
\label{scaled problem 1}
\int_{\Omega_1} \big( \Q \,\v^{1,\,\epsilon} \cdot \w^1 & 
-
\pepsone\,\grad\cdot\wone
\big)\,dx
+ \int_{\Omega_2^{\epsilon}} \big(\,\epsilon\,\E
\grad\,\vepstwo - \pepstwo
\delta\ \big)\: \grad\wtwo
 \,d\widetilde{x} dx_{\scriptscriptstyle N}
\\
+\,\alpha\int_{\Gamma}\big(\vepstwo\,\cdot\n\big)\, & \big(\wtwo\cdot\n\big)\,dS 
+ \int_{\Gamma} \epsilon^{2} \, \beta \, \sqrt{\Q}
\;\vtaneps \cdot \wtan \,d S 
= \int_{\Omega_2^{\epsilon}} {\mathbf f^{\,2,\,\epsilon}} \cdot
\wtwo \,d\widetilde{x} \,dx_{\scriptscriptstyle N} ,
\nonumber
\\
\label{scaled problem 2}
\int_{\Omega_1} \grad\cdot\vepsone\,
\varphi^1 \,dx &
+ \int_{\Omega_2^{\epsilon}}  \grad\cdot\vepstwo\, \varphi^2
 \,d\widetilde{x} \,dx_{\scriptscriptstyle N}
= \int_{\Omega_1} h^{1,\,\epsilon} \, \varphi^1 \, dx ,
\end{align}
\end{subequations}
\flushright{for all $\big[\w,\varphi\big]\in\X^{\epsilon}\times\Y^{\epsilon}$.}
\end{proposition}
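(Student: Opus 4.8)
**

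The plan is to derive the variational formulation by the standard procedure: multiply each PDE by an appropriate test function, integrate over the corresponding subdomain, integrate by parts to move derivatives onto the test functions, and then use the interface and boundary conditions to identify the boundary terms that survive. The test space $\X^\epsilon$ is chosen precisely so that the normal-trace matching $\v^1\cdot\n = \v^2\cdot\n$ on $\Gamma$ is built in, and $\X_2^\epsilon$ already encodes the homogeneous boundary conditions \eqref{BC Null Flux} and \eqref{BC normal boundary condition}; this is what makes the awkward boundary contributions drop out.

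First I would test the Darcy momentum equation \eqref{darcy} with $\w^1 \in \Hdiv(\Omega_1)$: the term $\int_{\Omega_1}\grad\pone\cdot\w^1$ integrates by parts to $-\int_{\Omega_1}\pone\,\grad\cdot\w^1 + \langle \pone, \w^1\cdot\n\rangle_{\partial\Omega_1}$, and the boundary integral splits into a piece on $\partial\Omega_1 - \Gamma$, which vanishes by the drained condition \eqref{BC Drained} $\pone = 0$, and a piece on $\Gamma$ equal to $\int_\Gamma \pone\,(\w^1\cdot\n)\,dS$. Next I would test the Stokes momentum equation \eqref{Momentum fluid}, written in the form $-\epsilon\mu\,\grad\cdot\grad\vtwo + \grad\ptwo = \f_2$ via \eqref{Eq stress divergence to laplacian velocity}, against $\w^2 \in \X_2^\epsilon$: integrating by parts gives the bulk terms $\int_{\Omega_2^\epsilon}(\epsilon\mu\,\grad\vtwo - \ptwo\,\delta):\grad\w^2$ plus a boundary term over $\partial\Omega_2^\epsilon$ of the form $-\int_{\partial\Omega_2^\epsilon}\big(\epsilon\mu\,\tfrac{\partial\vtwo}{\partial\n} - \ptwo\n\big)\cdot\w^2$. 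This boundary integral must be decomposed over $\partial\Gamma\times(0,\epsilon)$ (vanishes since $\w^2 = 0$ there), over $\Gamma+\epsilon$ (the tangential part vanishes by \eqref{BC tangential boundary condition} and the normal part by $\w^2\cdot\n = 0$), and over $\Gamma$. On $\Gamma$ I would split $\w^2 = \w^2_T + (\w^2\cdot\n)\n$ and use \eqref{Beavers - Joseph condition} to replace $\epsilon\mu\,\tfrac{\partial}{\partial x_N}\vtan$ by $\epsilon^2\beta\sqrt{\Q}\,\vtan$, producing the Beavers–Joseph term, and \eqref{scaled interface normal stress} to replace the normal stress combination, producing the term $\int_\Gamma\big(p^2 - p^1 + \alpha\,\vone\cdot\n\big)(\w^2\cdot\n)\,dS$.

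Then I would add the Darcy and Stokes tested equations. The Stokes boundary contribution on $\Gamma$ contributes $+\int_\Gamma p^2(\w^2\cdot\n) - \int_\Gamma p^1(\w^2\cdot\n) + \alpha\int_\Gamma(\vone\cdot\n)(\w^2\cdot\n)$, while the Darcy side contributes $+\int_\Gamma p^1(\w^1\cdot\n)$; the subtlety is that the $p^1$ and $p^2$ terms on $\Gamma$ cancel \emph{only} because $\w^1\cdot\n = \w^2\cdot\n$ on $\Gamma$ for admissible test functions — this is exactly where the constraint defining $\X^\epsilon$ is used, and likewise where $\vone\cdot\n = \vtwo\cdot\n$ from \eqref{admissability constraint} lets one write the $\alpha$ term as $\alpha\int_\Gamma(\vepstwo\cdot\n)(\w^2\cdot\n)$ as it appears in \eqref{scaled problem 1}. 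For the second equation \eqref{scaled problem 2}, I would simply test the two divergence constraints $\div\vone = h_1$ and $\grad\cdot\vtwo = 0$ against $\varphi^1 \in L^2(\Omega_1)$ and $\varphi^2 \in L^2(\Omega_2^\epsilon)$ respectively and add, noting $h^{2,\epsilon} = 0$. Finally I would argue the converse: a pair $[\v^\epsilon, p^\epsilon]$ solving \eqref{scaled problem} recovers the strong equations by taking test functions supported in $\Omega_1$ or $\Omega_2^\epsilon$ (in $\mathbf{H}^1_0$, so all boundary terms vanish) to get the PDEs in the bulk as distributions, and then reading off the boundary and interface conditions by integrating by parts backwards and letting the traces of $\w$ range freely, with the normal-trace constraint in $\X^\epsilon$ encoding \eqref{admissability constraint}.

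The main obstacle is bookkeeping of the boundary terms rather than any deep difficulty: one must be careful that the Stokes stress boundary term $\big(\epsilon\mu\,\tfrac{\partial\vtwo}{\partial\n} - \ptwo\n\big)$ has the right sign and that its tangential/normal decomposition matches the \emph{curl-free} reformulation \eqref{Beavers - Joseph condition}–\eqref{scaled interface normal stress} rather than the original \eqref{Eq Interface Stress Balance} (so the hypothesis that the velocity is curl-free on $\Gamma$, which lets one replace $\symgrad$-based normal derivatives by plain $\partial/\partial x_N$ derivatives, must be invoked explicitly). A secondary point requiring care is the regularity needed to justify the integrations by parts and the trace evaluations — the normal traces of $\Hdiv$ functions live in $H^{-1/2}(\Gamma)$ (Lemma \ref{Th Surjectiveness from Hdiv to H^1/2}), while $p^1$ is only in $L^2$, so the pairing $\int_\Gamma p^1(\w^1\cdot\n)$ should be read as a duality pairing and the cancellation on $\Gamma$ justified at that level; for a clean presentation I would either assume enough smoothness of a strong solution to make all integrals classical, or phrase the equivalence at the level of distributions and duality pairings throughout.
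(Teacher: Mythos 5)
Your approach matches the paper's own proof almost line for line: multiply \eqref{darcy} by $\w^1$ and the momentum equation \eqref{Momentum fluid} (rewritten via \eqref{Eq stress divergence to laplacian velocity}) by $\w^2$, integrate by parts, kill the exterior boundary pieces using \eqref{BC Top boundary condition} and the definition of $\X_2^\epsilon$, combine the interface terms using the trace constraint $\w^1\cdot\n=\w^2\cdot\n$ built into $\X^\epsilon$, and then substitute \eqref{Beavers - Joseph condition} and \eqref{scaled interface normal stress} to produce the $\beta$- and $\alpha$-terms; the divergence constraints tested against $\varphi\in\Y^\epsilon$ give \eqref{scaled problem 2}, and the converse is the usual localization argument. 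Your additional remarks on the $H^{-1/2}$/$L^2$ duality pairing on $\Gamma$ and the explicit invocation of the curl-free hypothesis are sensible clarifications the paper leaves implicit.

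One small slip in the parenthetical bookkeeping: when you substitute \eqref{scaled interface normal stress} into the normal part of the Stokes stress term on $\Gamma$, the pressure contribution $-\ptwo(\w^2\cdot\n)$ from $\sigma^2(\n)$ cancels the $+\ptwo(\w^2\cdot\n)$ coming from the replacement $\epsilon\mu\,\partial_{x_N}\vnorm = \ptwo-\pone+\alpha\,\vone\cdot\n$, so the Stokes side actually contributes $\int_\Gamma(-\pone+\alpha\,\vone\cdot\n)(\w^2\cdot\n)\,dS$ with no free $\ptwo$ term; the $\pone$ then cancels against the Darcy interface contribution $\int_\Gamma\pone(\w^1\cdot\n)\,dS$ precisely because $\w^1\cdot\n=\w^2\cdot\n$. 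Your written tally has a stray $+\int_\Gamma\ptwo(\w^2\cdot\n)$ that has nothing to cancel against; if one carries the signs consistently (being careful that $\partial/\partial\n$ in \eqref{Beavers - Joseph condition}--\eqref{scaled interface normal stress} refers to the outward normal of $\Omega_1$, i.e.\ $\partial/\partial x_N$, not the outward normal of $\Omega_2^\epsilon$) the $\ptwo$ terms disappear internally, exactly as in \eqref{scaled problem 1}. This is a clerical issue, not a gap in the method.
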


\begin{proof}
Let $\v^{\epsilon}=\big[\vepsone,\vepstwo\big]$,
$p^{\epsilon}=\big[\pepsone,\pepstwo\big]$ be a solution and choose a
test function $\w = [\wone,\wtwo] \in \X^\epsilon$.  Substitute the
relationship \eqref{Eq stress divergence to laplacian velocity} in the
momentum equation \eqref{Momentum fluid} and multiply the outcome by
$\wtwo$. Multiply the Darcy law \eqref{darcy} by $\wone$. Integrating
both expressions and adding them together, we obtain
\begin{multline}\label{Eq First Integration for Velocities}
\int_{\Omega_1}\Big(\,\Q\,\vepsone\cdot\w^{1}
-\pepsone\,\delta\:\symgrad\big(\w^{1}\big)\,\Big)\,dx
+\int_{\Omega_2^{\epsilon}}\big(\epsilon\,\E\grad\vtwo
-\pepstwo\,\delta\big)\:\grad\w^{2}\,dx\\
+\int_{\Gamma} \Big(\pepsone\,\n\cdot\wone
+\epsilon\big(\grad\,\vepstwo\,(\n) \big)\cdot\wtwo
-\pepstwo\,\big(\wtwo\cdot\n\big)\Big)\,dS
= \int_{\Omega_2^{\epsilon}} \mathbf{f}^{2}\cdot\wtwo\,dx .
\end{multline}
Since $\w$ satisfies the admissibility constraint
\eqref{admissability constraint}, $\wone\cdot\n=\wtwo\cdot\n$ on
$\Gamma$, the interface integral reduces to
\begin{equation*}
\int_{\Gamma} \Big(\epsilon\,\frac{\partial\,\vepstwo}{\partial\,\n}\cdot \wtwo
+\big(\pepsone -\pepstwo\big)\,\big(\wone\cdot\n\big)\Big)\,dS .
\end{equation*}
Decomposing the velocity terms into their normal and
tangential components, we obtain
\begin{equation*}
\int_{\Gamma}
\Big\{\epsilon\Big(\frac{\partial\,\vepstwo}{\partial\,\n}\Big)_{T}\cdot\wtan
+\Big(\epsilon\Big(\frac{\partial\,\vepstwo}{\partial\,\n}\cdot\n\Big)
+\pepsone -\pepstwo\Big)\,\big(\wtwo\cdot\n\big)\Big\}\,dS.
\end{equation*}
Therefore, the interface conditions \eqref{Beavers - Joseph condition}
and \eqref{scaled interface normal stress} yield
\begin{equation*}
\int_{\Gamma}
\epsilon^{2}\,\beta\,\sqrt{\Q}\,\vtaneps\cdot\wtan\,dS
+\,\alpha\int_{\Gamma}\big(\vepsone\cdot\n\big)\,\big(\wone\cdot\n\big)\,dS ,
\end{equation*}
and inserting this in \eqref{Eq First Integration for Velocities}
yields the variational statement \eqref{scaled problem 1}.  Next,
multiply the fluid conservation equations with a test function $\varphi =
[\varphi^{1}, \varphi^{2}] \in L^{2}(\Omega^{\epsilon})$, integrate
over the corresponding regions and add them together to obtain the
variational statement \eqref{scaled problem 2}.  Conversely, by making
appropriate choices of test functions in \eqref{scaled problem} and
reversing the preceding calculations, it follows that these
formulations are equivalent.
\qed
\end{proof}
%
\subsection{The mixed formulation}
Define the operators $A^{\epsilon}: \X^{\epsilon}\rightarrow
(\X^{\epsilon})'$, $B^{\epsilon}: \X^{\epsilon}\rightarrow
(\Y^{\epsilon})'$ by
\begin{subequations}
\begin{multline}    \label{def-A}
A^\epsilon\v(\w) \defining 
\int_{\Omega_1} \big( \Q \,\v^{1} \cdot \w^1 \big)\,dx
+ \alpha\int_{\Gamma}\big(\vone\,\cdot\n\big)\,  \big(\wone\cdot\n\big)\,dS 
\\
+  \int_{\Gamma} \epsilon^{2} \, \beta \, \sqrt{\Q} \vtan \cdot \wtan \,dS
+ \int_{\Omega_2^{\epsilon}} \big(\,\epsilon\,\E
\grad\,\vtwo \: \grad\wtwo
 \,d\widetilde{x} dx_{\scriptscriptstyle N}\,,
\end{multline}
\begin{equation}    \label{def-B}
B^\epsilon\v(\varphi) \defining 
\int_{\Omega_1} \grad\cdot\vone\, \varphi^1 \,dx 
+ \int_{\Omega_2^{\epsilon}}  \grad\cdot\vtwo\, \varphi^2
 \,d\widetilde{x} \,dx_{\scriptscriptstyle N},
\end{equation}
\begin{flushright}
 for all  $\v, \w \in \X^{\epsilon},\, \varphi \in \Y^{\epsilon} $. 
\end{flushright}
\end{subequations}
These are denoted also by matrix operators
\begin{subequations}\label{Def Epsilon Mixed Operators}
\begin{equation}\label{operator A}
A^{\epsilon} =
\left(\begin{array}{cc}\, \Q + \gamma_{\, \n} '\,\alpha\,\gamma_{\, \n}   & 
\0 \\[3pt]
\0 & 
\epsilon^{2}\,\gamma_{\,T}'\,\beta\,\sqrt{\Q}\,\gamma_{\,T}
+\,\epsilon\,(\grad)'\,\E\,\grad\end{array} \right) 
\end{equation}
and
\begin{equation}\label{operator B }
B^{\epsilon} =
\left(\begin{array}{cc} \,\grad\,\cdot & 0 \\
0 &  \grad\,\cdot \end{array} \right)
= \left(\begin{array}{cc} \,\text{div} & 0 \\
0 &  \text{div} \end{array} \right) .
\end{equation}
\end{subequations}
With these operators, the variational formulation \eqref{scaled
problem} for the boundary-value problem takes the form
\begin{equation}\label{Def Scaled Problem MIxed Formulation} 
\begin{split} 
%
[\,\veps,\peps\,]\in \X^{\epsilon}\times\Y^{\epsilon}:
A^{\epsilon}\,\veps- (B^{\epsilon}) '\,\peps&=\mathbf{f}^{2,\,\epsilon} , 
\\
%
 B^{\epsilon}\,\veps &= h^{1,\,\epsilon} . 
%
\end{split}
\end{equation}
Here, the unknowns are $\veps \defining [\,\vepsone,\vepstwo\,] \in
\X^{\epsilon}$, $\peps\defining[\,\pepsone, \pepstwo\,]\in
\Y^{\epsilon}$. Next, we show that the Problem \eqref{Def Scaled
Problem MIxed Formulation} is well-posed by verifying that the hypotheses
of Theorem \ref{Th well posedeness mixed formulation classic} are
satisfied.
\begin{lemma}\label{Th coercivity of A epsilon}
The operator $A^{\epsilon}$ is $\X^{\epsilon}$-coercive
over $\X^{\epsilon}\cap \ker (B)$.
\end{lemma}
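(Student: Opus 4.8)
The plan is to estimate $A^\epsilon\v(\v)$ from below by $C\,\|\v\|_{\X^\epsilon}^2$ for every $\v=[\v^1,\v^2]\in\X^\epsilon\cap\ker(B^\epsilon)$, using Hypothesis~\ref{Hyp Bounds on the Coefficients} and the fact that on $\ker(B^\epsilon)$ the divergences vanish. First I would discard the two nonnegative boundary terms (the $\alpha$-term and the Beavers--Joseph--Saffman $\epsilon^2\beta\sqrt{\Q}$-term), since $\alpha\ge 0$, $\beta\ge 0$ and $\sqrt{\Q}$ is positive semidefinite by ellipticity of $\Q$; this gives the lower bound
\begin{equation*}
A^\epsilon\v(\v)\ \ge\ \int_{\Omega_1}(\Q\,\v^1)\cdot\v^1\,dx
+\epsilon\,\E\int_{\Omega_2^\epsilon}|\grad\v^2|^2\,d\widetilde{x}\,dx_{\scriptscriptstyle N}
\ \ge\ C_{\Q}\,\|\v^1\|_{\mathbf L^2(\Omega_1)}^2
+\epsilon\,\E\,|\v^2|_{\H1bold(\Omega_2^\epsilon)}^2 .
\end{equation*}

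Next I would upgrade each of the two pieces to the full norm of the corresponding space. On $\ker(B^\epsilon)$ one has $\div\v^1=0$ in $L^2(\Omega_1)$, so $\|\v^1\|_{\Hdiv(\Omega_1)}^2=\|\v^1\|_{\mathbf L^2(\Omega_1)}^2$ and the Darcy term already controls $\|\v^1\|_{\Hdiv(\Omega_1)}^2$ directly. For $\v^2$, the issue is to pass from the seminorm $|\v^2|_{\H1bold(\Omega_2^\epsilon)}$ to the full $\H1bold(\Omega_2^\epsilon)$-norm; this is exactly a Poincaré-type inequality, available because $\v^2=0$ on the lateral boundary $\partial\Gamma\times(0,\epsilon)$ (and $\v^2\cdot\n=0$ on $\Gamma+\epsilon$), which is a portion of $\partial\Omega_2^\epsilon$ of positive measure. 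Hence $\|\v^2\|_{\H1bold(\Omega_2^\epsilon)}^2\le C_P\,|\v^2|_{\H1bold(\Omega_2^\epsilon)}^2$, and combining gives
\begin{equation*}
A^\epsilon\v(\v)\ \ge\ \min\!\big(C_{\Q},\,\epsilon\,\E/C_P\big)\,
\big(\|\v^1\|_{\Hdiv(\Omega_1)}^2+\|\v^2\|_{\H1bold(\Omega_2^\epsilon)}^2\big)
\ =\ C\,\|\v\|_{\X^\epsilon}^2 .
\end{equation*}

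The main obstacle — and the only subtle point — is the Poincaré inequality on the thin cylinder $\Omega_2^\epsilon=\Gamma\times(0,\epsilon)$: a priori its constant $C_P$ could degenerate as $\epsilon\to 0$, and indeed one must be careful about which directions are clamped. Since $\v^2$ vanishes on $\partial\Gamma\times(0,\epsilon)$, the relevant Poincaré constant is governed by the tangential cross-section $\Gamma$ (fixed, independent of $\epsilon$), not by the thin direction, so $C_P$ can be taken independent of $\epsilon$; this is enough for the present lemma, where $\epsilon>0$ is fixed. (The $\epsilon$-dependence of the constant $C=\min(C_{\Q},\epsilon\E/C_P)$ is harmless here; the uniform-in-$\epsilon$ estimates needed for the asymptotic analysis are handled separately via rescaling to $\Omega^1$.) I would conclude by noting that non-negativity of $A^\epsilon$ on all of $\X^\epsilon$ is immediate from the same term-by-term sign analysis, so Lemma~\ref{Th coercivity of A epsilon} follows.
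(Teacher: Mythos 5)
Your proposal is correct and follows essentially the same reasoning as the paper's (very terse) proof: the paper simply notes that $A^{\epsilon}\v(\v)+\int_{\Omega_1}(\div\v)^2$ is $\X^{\epsilon}$-coercive and that $\div\v\vert_{\Omega_1}=0$ on $\ker(B)$, while you spell out the underlying ingredients — ellipticity of $\Q$, dropping the nonnegative interface terms, and the Poincar\'e inequality on $\Omega_2^\epsilon$ coming from the clamped lateral boundary $\partial\Gamma\times(0,\epsilon)$. Your added remarks about the $\epsilon$-dependence of the coercivity constant and about nonnegativity of $A^\epsilon$ on all of $\X^\epsilon$ are both accurate and harmless.
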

\begin{proof} The form
$\displaystyle
A^{\epsilon}\,\v\,(\v)+\int_{\Omega_1}\big(\grad\cdot\v\big)^{2}$ is
$\X^{\epsilon}$-coercive, and $\grad \cdot \v\,\vert_{\,\Omega_1}=0$
whenever $\v \in \ker(B)$.  
\qed
\end{proof}

In order to verify the inf-sup condition for the operator
$B^{\epsilon}$ we introduce the space
\begin{equation}\label{Def Boundary Space}
\F(\Omega^{\epsilon})\defining
\big\{\v\in\H1bold(\Omega^{\epsilon}):\v=0 \; \text{on}\,
\partial \Omega_{2}^{\epsilon} - \Gamma \big\} ,
\end{equation}
endowed with the $\H1bold(\Omega^{\epsilon})$-norm.
\begin{lemma}\label{Th B closed range epsilon}
The operator $B^{\epsilon}$ has closed range.
\end{lemma}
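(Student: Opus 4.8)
The plan is to show that $B^\epsilon$ has closed range by proving the inf-sup condition outright, since an operator satisfying the inf-sup inequality automatically has closed range; in fact this lemma is really the first half of verifying hypothesis (ii) of Theorem~\ref{Th well posedeness mixed formulation classic}. Concretely, I want to show that for every $\varphi=[\varphi^1,\varphi^2]\in\Y^\epsilon=L^2(\Omega^\epsilon)$ there exists $\v=[\v^1,\v^2]\in\X^\epsilon$ with $B^\epsilon\v=\varphi$ (i.e. $\grad\cdot\v^1=\varphi^1$ in $\Omega_1$ and $\grad\cdot\v^2=\varphi^2$ in $\Omega_2^\epsilon$) and $\Vert\v\Vert_{\X^\epsilon}\le C\Vert\varphi\Vert_{0,\Omega^\epsilon}$, with $C$ possibly depending on $\epsilon$. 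Surjectivity of the divergence with such an estimate is exactly equivalent to the inf-sup condition for $B^\epsilon$, hence to closed range.

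The construction proceeds in two stages to respect the coupling constraint $\v^1\cdot\n=\v^2\cdot\n$ on $\Gamma$. First I solve the divergence problem in the free-fluid region: by the classical Bogovski\u\i/Ne\v cas result there is $\v^2\in\H1bold(\Omega_2^\epsilon)$ with $\grad\cdot\v^2=\varphi^2$, vanishing on $\partial\Omega_2^\epsilon-\Gamma$ (so in particular $\v^2=0$ on $\partial\Gamma\times(0,\epsilon)$ and $\v^2\cdot\n=0$ on $\Gamma+\epsilon$, placing $\v^2\in\X_2^\epsilon$), and with $\Vert\v^2\Vert_{\H1bold(\Omega_2^\epsilon)}\le C(\epsilon)\Vert\varphi^2\Vert_{0,\Omega_2^\epsilon}$. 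This determines the normal trace $g\defining\v^2\cdot\n\in H^{-1/2}(\Gamma)$, which we extend by zero to $H^{-1/2}(\partial\Omega_1)$ — consistent with the drained part carrying no constraint. Second, on the porous side I must find $\v^1\in\Hdiv(\Omega_1)$ with $\grad\cdot\v^1=\varphi^1$ in $\Omega_1$ and $\v^1\cdot\n=g$ on $\Gamma$ (and, say, with a free normal trace on $\partial\Omega_1-\Gamma$ absorbing the compatibility $\int_{\Omega_1}\varphi^1=\int_{\partial\Omega_1}\v^1\cdot\n$). I split this: use Lemma~\ref{Th Surjectiveness from Hdiv to H^1/2} to get $\w\in\Hdiv(\Omega_1)$ with $\w\cdot\n=g$ on $\Gamma$ (and $=0$ on the rest of $\partial\Omega_1$) and $\Vert\w\Vert_{\Hdiv(\Omega_1)}\le K\Vert g\Vert_{H^{-1/2}(\Gamma)}$; then solve $\grad\cdot\z=\varphi^1-\grad\cdot\w$ with $\z\in\H1bold_0(\Omega_1)$ (again Bogovski\u\i, after checking the zero-mean compatibility over $\Omega_1$, which holds because $\int_{\Omega_1}(\varphi^1-\grad\cdot\w)=\int_{\Omega_1}\varphi^1-\int_\Gamma g$ — this is precisely the global compatibility of $\varphi$ built into $\Y^\epsilon$ once one accounts for the drained boundary, or one simply works with $\Y^\epsilon$ quotiented appropriately). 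Setting $\v^1=\w+\z$ gives $\v^1\cdot\n=g=\v^2\cdot\n$ on $\Gamma$, so $[\v^1,\v^2]\in\X^\epsilon$, and summing the estimates yields $\Vert\v\Vert_{\X^\epsilon}\le C(\epsilon)\Vert\varphi\Vert_{0,\Omega^\epsilon}$.

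The main obstacle is bookkeeping the compatibility conditions and the normal-trace matching at the interface $\Gamma$: one must be careful that $g=\v^2\cdot\n$ really lies in $H^{-1/2}(\Gamma)$ and extends to $H^{-1/2}(\partial\Omega_1)$ in a way compatible with Lemma~\ref{Th Surjectiveness from Hdiv to H^1/2}, and that the mean-value constraint on $\varphi^1$ needed for solvability of the divergence equation in $\Omega_1$ is exactly the one encoded in the definition of $\Y^\epsilon$ (or handled by the drained outflow on $\partial\Omega_1-\Gamma$). None of the individual pieces is hard — they are the Bogovski\u\i surjectivity of the divergence and Lemma~\ref{Th Surjectiveness from Hdiv to H^1/2} — but assembling them into a single $\v\in\X^\epsilon$ with a norm bound is where the care lies. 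Once the bounded right inverse of $B^\epsilon$ is in hand, closed range (indeed the inf-sup condition \eqref{Ineq general inf-sup condition}) follows immediately: for each $\varphi$, $\sup_{\v}\frac{|B^\epsilon\v(\varphi)|}{\Vert\v\Vert_{\X^\epsilon}}\ge\frac{|B^\epsilon\v_\varphi(\varphi)|}{\Vert\v_\varphi\Vert_{\X^\epsilon}}=\frac{\Vert\varphi\Vert_{0}^2}{\Vert\v_\varphi\Vert_{\X^\epsilon}}\ge C(\epsilon)^{-1}\Vert\varphi\Vert_0$.
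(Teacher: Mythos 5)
Your proof takes a genuinely different route from the paper's. The paper restricts the sup to the global space $\F(\Omega^{\epsilon})=\{\v\in\H1bold(\Omega^{\epsilon}):\v=0\text{ on }\partial\Omega_2^{\epsilon}-\Gamma\}$, so that the interface matching $\v^1\cdot\n=\v^2\cdot\n$ is automatic (these are $\mathbf{H}^1$ across $\Gamma$), and then appeals in one shot to the inf-sup condition for the mixed-boundary-condition Stokes problem on all of $\Omega^{\epsilon}$ (Theorem 3.7 of Girault--Raviart). You instead construct a bounded right inverse of $B^{\epsilon}$ piecewise: first an $\X_2^{\epsilon}$-solution of $\grad\cdot\v^2=\varphi^2$ in $\Omega_2^{\epsilon}$, then lift the normal trace $g=\v^2\cdot\n$ into $\Hdiv(\Omega_1)$ via Lemma~\ref{Th Surjectiveness from Hdiv to H^1/2}, and finally correct the divergence in $\Omega_1$. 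Both approaches hinge on the same underlying input (surjectivity of the divergence, or equivalently a Stokes inf-sup, when only part of the boundary is constrained), but the paper applies it once globally while you apply it locally and then patch across $\Gamma$. The paper's route is cleaner because the matching and the compatibility are handled invisibly inside the cited theorem; yours makes the interface mechanics explicit, at the cost of bookkeeping the compatibility by hand.

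That bookkeeping is where there is a real gap. You invoke Bogovski\u{\i} to solve $\grad\cdot\z=\varphi^1-\grad\cdot\w$ with $\z\in\mathbf{H}_0^1(\Omega_1)$ and then claim the required mean-zero compatibility $\int_{\Omega_1}\varphi^1=\int_\Gamma g$ is ``built into $\Y^{\epsilon}$.'' It is not: by \eqref{Def Pressures}, $\Y^{\epsilon}=L^2(\Omega^{\epsilon})$ carries no mean-value constraint at all, so for generic $\varphi^1$ the compatibility fails and the $\mathbf{H}_0^1(\Omega_1)$-Bogovski\u{\i} solve is simply unavailable. The fix you gesture at parenthetically is the right one and should be the actual argument: since $\X^{\epsilon}$ imposes no condition on $\v^1\cdot\n$ along $\partial\Omega_1-\Gamma$, you should solve for $\z\in\mathbf{H}^1(\Omega_1)$ with $\z=0$ only on $\Gamma$ and free trace on $\partial\Omega_1-\Gamma$; then the surplus flux escapes through the drained boundary and no compatibility is needed. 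This is again a mixed-boundary-condition divergence solve (deducible from Bogovski\u{\i} by an extension of the domain across $\partial\Omega_1-\Gamma$), the same type of result you already used in $\Omega_2^{\epsilon}$. With that replacement the proof closes, and your final inf-sup chain is correct (the $\epsilon$-dependence of the constant is harmless for this lemma). A minor further imprecision, not a gap: the first step also needs the mixed-boundary version of divergence surjectivity, not the classical $\mathbf{H}^1_0$ Bogovski\u{\i} statement, since $\v^2$ must vanish only on $\partial\Omega_2^{\epsilon}-\Gamma$ and hence no mean-zero condition should be assumed on $\varphi^2$ either.
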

\begin{proof} 
Since $\F(\Omega^{\epsilon})\subseteq \X^{\epsilon}$ and the
Poincar\'e inequality gives a constant $C>0$ such that $\Vert \v
\Vert_{\X}\leq C \Vert\v\Vert_{\,\mathbf{H}^{1}(\Omega^{\epsilon})}$
for all $\v\in \F(\Omega^{\epsilon})$, we have
\begin{multline}\label{Ineq Inf Sup for B epsilon}
\inf_{\varphi\,\in \,L^{2}(\Omega^{\epsilon})}\,\sup_{\v\,\in\,\X}
\frac{B^{\epsilon}\,\v\,(\varphi)}
{\Vert \v \Vert_{\X} \Vert \varphi \Vert_{L^2(\Omega^{\epsilon})}}
\geq
\inf_{\varphi\,\in \,L^{2}(\Omega^{\epsilon})}\,\sup_{\v\,\in\,\F(\Omega^{\epsilon})}
\frac{B^{\epsilon}\,\v\,(\varphi)}
{\Vert \v \Vert_{\X} \Vert \varphi \Vert_{L^2(\Omega^{\epsilon})}}
\\
\geq
\frac{1}{C}\,
\inf_{\varphi\,\in \, L^{2}(\Omega^{\epsilon})}\,
\sup_{\v\,\in\,\F(\Omega^{\epsilon})}
\frac{B^{\epsilon}\,\v\,(\varphi)}
{\Vert\v\Vert_{\mathbf{H}_{0}^{1}(\Omega^{\epsilon})} \Vert \varphi \Vert_{L^2(\Omega^{\epsilon})}}.
\end{multline}
The last term above is known to be positive (see Theorem 3.7 in
\cite{GirRav79}), since it corresponds to the inf-sup condition
for the Stokes problem with mixed boundary conditions:
\begin{align}\label{Pblm Stokes Mixed Bd conditions}
& -\div \big(\E \, \grad \w\big) + \grad q = \g_{1}\, , &
& \div \w = 0 \,   \text{ in }\, \Omega^{\epsilon},\\
& \w = 0\; \text{ on }\, \partial \Omega_{2}^{\epsilon} - \Gamma \, , & 
& \E\, \frac{\partial \w}{\partial \n} - q \, \n = \g_{2} \; 
\text{ on }\, \partial \Omega^{\epsilon} - \partial \Omega_{2}^{\epsilon} ,
\end{align}
and forcing terms $\g_{1}$ and $\g_{2}$ satisfying the necessary
hypotheses of duality.
\qed
\end{proof}
\begin{theorem}\label{Th well-posedness of the epsilon problem}
The Problem \eqref{Def Scaled Problem MIxed Formulation}  is well-posed.
\end{theorem}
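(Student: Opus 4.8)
The plan is to verify the three hypotheses of Theorem \ref{Th well posedeness mixed formulation classic} for the operators $A^\epsilon$, $B^\epsilon$ and then conclude. Concretely, I would take $\X = \X^\epsilon$, $\Y = \Y^\epsilon = L^2(\Omega^\epsilon)$, $\A = A^\epsilon$, $\B = B^\epsilon$, $\C = 0$, and $F_1 = \mathbf{f}^{2,\epsilon}$, $F_2 = h^{1,\epsilon}$, so that Problem \eqref{Def Scaled Problem MIxed Formulation} is exactly an instance of the abstract system \eqref{Pblm operators abstrac system}.

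First I would record that $A^\epsilon$ and $B^\epsilon$ are continuous: this is immediate from the definitions \eqref{def-A}, \eqref{def-B}, the boundedness of the coefficients $\alpha,\beta$ and of $\Q$ (Hypothesis \ref{Hyp Bounds on the Coefficients}), the continuity of the normal and tangential trace operators on $\Gamma$, and the fact that the $\X^\epsilon$-norm controls $\grad \v^2$ on $\Omega_2^\epsilon$ and $\div\v^1$ on $\Omega_1$. Then hypothesis (i) — $A^\epsilon$ nonnegative and $\X^\epsilon$-coercive on $\ker(B^\epsilon)$ — is exactly Lemma \ref{Th coercivity of A epsilon}: nonnegativity is clear since every term in $A^\epsilon\v(\v)$ is a square or a nonnegative-coefficient quadratic form, and coercivity on the kernel follows because adding $\int_{\Omega_1}(\div\v)^2$ gives a form controlling $\|\v^1\|_{\Hdiv(\Omega_1)}^2 + \epsilon\,\mu\,\|\grad\v^2\|_{0,\Omega_2^\epsilon}^2$, hence (via Poincaré on $\X_2^\epsilon$) the full $\X^\epsilon$-norm, while on $\ker(B^\epsilon)$ the added term vanishes. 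Hypothesis (iii), that $\C$ is nonnegative and symmetric, is trivial since $\C = 0$.

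The remaining hypothesis (ii), the inf-sup condition \eqref{Ineq general inf-sup condition} for $B^\epsilon$, is the substantive point, and this is precisely where Lemma \ref{Th B closed range epsilon} does the work: the chain of inequalities \eqref{Ineq Inf Sup for B epsilon} bounds the inf-sup quotient for $B^\epsilon$ over $\X^\epsilon$ from below by $1/C$ times the inf-sup quotient of the divergence over $\mathbf{H}_0^1(\Omega^\epsilon)$ restricted to $\F(\Omega^\epsilon)$, which is the classical Stokes inf-sup constant for the mixed boundary-value problem \eqref{Pblm Stokes Mixed Bd conditions} and is strictly positive by Theorem 3.7 of \cite{GirRav79}. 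Since all hypotheses of Theorem \ref{Th well posedeness mixed formulation classic} hold, that theorem yields a unique solution $[\veps,\peps]\in\X^\epsilon\times\Y^\epsilon$ together with the stability estimate \eqref{mix-est}, which is exactly the assertion that Problem \eqref{Def Scaled Problem MIxed Formulation} is well-posed.

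The only mild subtlety — and the step I would be most careful about — is that the inf-sup constant obtained this way, being a product of a Poincaré constant on $\F(\Omega^\epsilon)$ and the Stokes constant on $\Omega^\epsilon$, depends on $\epsilon$; but for the present theorem (well-posedness at fixed $\epsilon$) this is harmless, and I would simply note it, deferring the $\epsilon$-uniform estimates to the rescaled problem on $\Omega^1$ treated in the next subsection. Everything else is a direct citation of the lemmas already proved above.
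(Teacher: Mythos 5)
Your proposal is correct and takes essentially the same route as the paper: the paper's proof of Theorem~\ref{Th well-posedness of the epsilon problem} is exactly a citation of Lemmas~\ref{Th coercivity of A epsilon} and~\ref{Th B closed range epsilon} followed by an appeal to Theorem~\ref{Th well posedeness mixed formulation classic} with $\C=0$, which is precisely the argument you spell out (in somewhat more detail, including the correct observation that the resulting constant depends on $\epsilon$, which is indeed harmless at this stage).
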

\begin{proof}
Due to Lemmas \ref{Th coercivity of A epsilon} and \ref{Th B closed range epsilon} above, the operators $A^{\epsilon}$ and $B^{\epsilon}$ satisfy the hypotheses of Theorem \ref{Th well posedeness mixed formulation classic} and the result follows.
\qed
\end{proof}
%
%
%
%
%
%
%
%
\subsection{The Reference Domain}
\begin{figure}[h] 
	\centering
		{\resizebox{7cm}{8cm}
			{\includegraphics{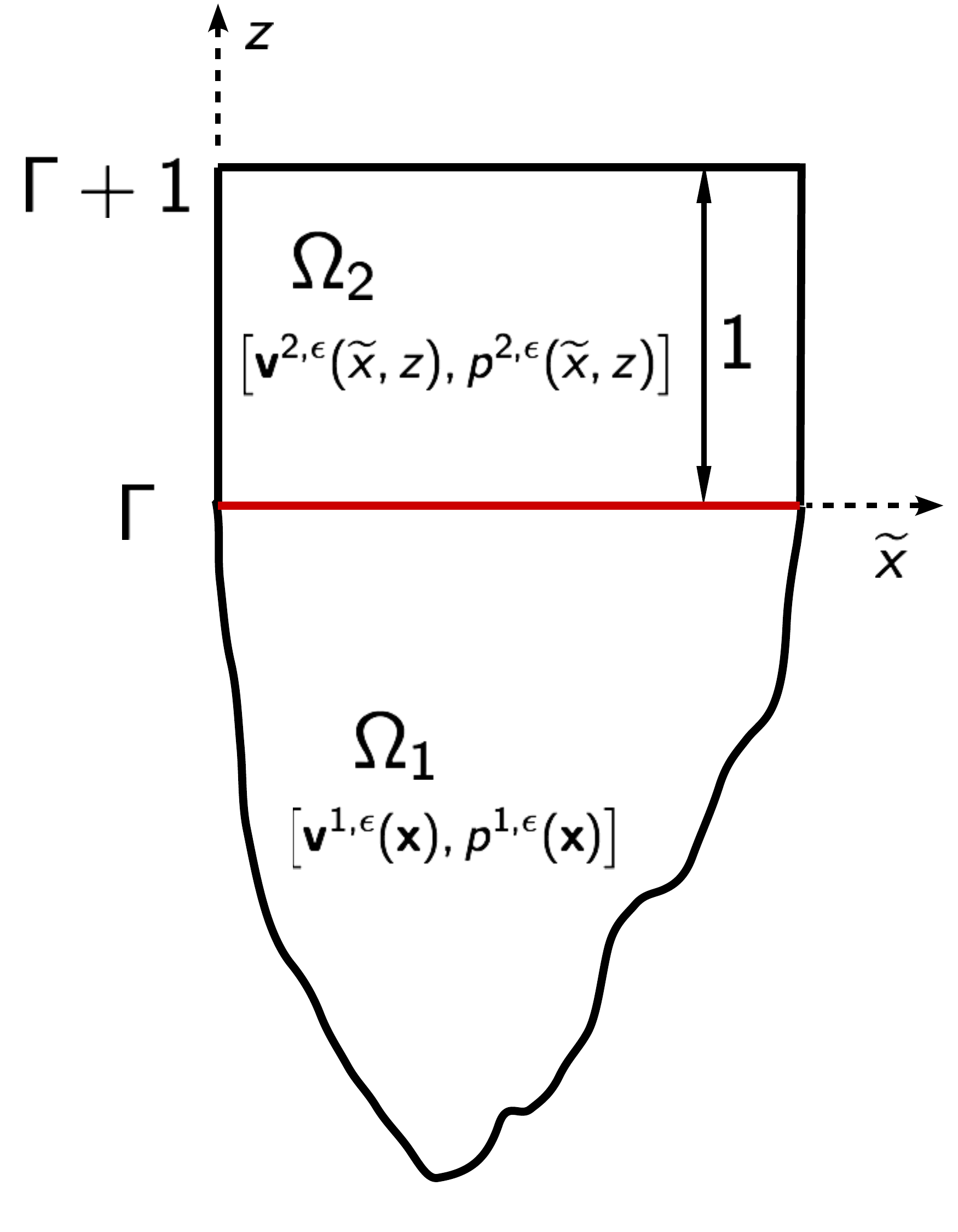} } }                
	\caption{The domain of reference for asymptotic analysis.}
	\label{Fig Domain Scaling}
\end{figure}
The solutions $\big\{\big[\,\veps, \peps\,\big]:\epsilon>0\big\}$ to
the Problem \eqref{scaled problem} (equivalently Problem \eqref{Def
Scaled Problem MIxed Formulation}), have different geometric domains
of definition and therefore no convergence statements can be
stated. In addition, the a-priori estimates given from the
well-posedness of the Problem \eqref{Def Scaled Problem MIxed
Formulation} depend on the geometry of the domain where the problem is
defined. Therefore, a domain of reference will be established; since
the only part that is changing is the thickness of the channel, this
suffices for the appropriate change of variable. Given
$\x=(\boldsymbol{\widetilde{x} } ,x_{\scriptscriptstyle N})\in\Omega_{2}^{\epsilon}$,
define $x_{\scriptscriptstyle N}=\epsilon\,z$, hence
$\frac{\partial}{\partial x_{\scriptscriptstyle
N}}=\frac{1}{\epsilon}\frac{\partial}{\partial z}$, see Figure
\ref{Fig Domain Scaling}. For any $\w\in\X_{2}^\epsilon$ we have the
following changes on the structure of the gradient and the divergence,
respectively,
\begin{subequations}\label{Eq Operators Change of Structure}
\begin{equation}\label{structure gradient}
\grad\,\w\,\big(\boldsymbol{\widetilde{x} } ,x_{\scriptscriptstyle N}\big)=
\left(\begin{array}{cc} [\,\grad_{\!T}\,\w_{\scriptscriptstyle T}] 
& \dfrac{1}{\epsilon}\,\partial_{z}\mathbf{w}_{\scriptscriptstyle T}\\[9pt]
\big(\,\grad_{\!T}\,\w_{\scriptscriptstyle N}\,\big)'
& \dfrac{1}{\epsilon}\,\partial_{z}\w_{\scriptscriptstyle N} \end{array}
\right)(\widetilde{x},z),
\end{equation}
\begin{equation}\label{structure divergence}
\div
\w\,(\boldsymbol{\widetilde{x} },x_{\scriptscriptstyle N})
=\Big(\grad_{\!T}\cdot\,\w_{\scriptscriptstyle T}
+\frac{1}{\epsilon}\,\partial_{z}\w_{\scriptscriptstyle N}\Big)(\widetilde{x},z) .
\end{equation}
\end{subequations}
Taking in consideration \eqref{structure gradient}, \eqref{structure
divergence} and combining it with \eqref{scaled problem} we obtain a
family of $\epsilon$-problems in a common domain of definition (see
Figure \ref{Fig Domain Original}) given by $\Omega\defining
\Omega_{1}\cup \Omega_{2}$, where $\Omega_{1},\Omega_{2}\subseteq
\R^{3}$ are bounded open sets, with $\Omega_{2}\defining \Gamma\times
(0,1)$ and $\Gamma=\partial
\Omega_{1}\cap\partial\Omega_{2}\subseteq\R^{2}$. Letting
$\Gamma+1\defining\big\{(\boldsymbol{\widetilde{x} }, 1):\boldsymbol{\widetilde{x} }\in
\Gamma\big\}$, the functional setting is now independent of $\epsilon$
and defined by
\begin{displaymath}
\X_{2}\defining
\big\{\v\in\H1bold(\Omega_{2}):\v=\0\;\text{on}\,\partial\Gamma\times(0,1),\,\
\v\cdot\n = 0\,\text{on}\,\Gamma+1\,\big\} ,
\end{displaymath}
\begin{equation}\label{Def Coupled Velocities Space}
\X\defining\big\{[\,\v^{1},\v^{2}\,]\in
\Hdiv(\Omega_{1})\times\X_{2}:\v^{1}\cdot\n=\v^{2}\cdot\n\;\text{on}\,\Gamma\big\} 
= \big\{ \v \in \Hdiv(\Omega): \v^{2} \in \X_2 \big\},
\end{equation}
\begin{displaymath}
\Y\defining L^{2}(\Omega) .
\end{displaymath}
Moreover, we have the following result.
\begin{proposition}\label{Th Scaled Weak Variational Formulation}
Under the change of variable $(\xthilde, x_{\scriptscriptstyle N})
\mapsto (\xthilde, x_{\scriptscriptstyle N}) \ind_{\Omega_{1}} +
(\xthilde, \epsilon \, z)\ind_{\Omega_{2}}$, the Problem \eqref{scaled
problem} is equivalent to the corresponding problem 
\begin{flushleft}
$[\v^{\epsilon},p^{\epsilon}]\in\X\times\Y:$
\end{flushleft}
\begin{subequations}\label{problem fixed geometry}
%
%
%
\begin{equation}\label{problem fixed geometry 1}
\begin{split}
\int_{\Omega_1}  \Q\, \vepsone \cdot \w^1 \,dx 
& - \int_{\Omega_1}\pepsone \, \grad\cdot\wone
\,dx 
-  \epsilon \int_{\Omega_2} \pepstwo \, \grad_{\!T} \cdot
\wtan \,d\widetilde{x} \,dz
- \int_{\Omega_2} \pepstwo \, \partial_{z} \wnorm \,d\widetilde{x} \,dz\\
& +\epsilon^{2}\int_{\Omega_{2}}\E
\grad_{\!T}\,\vtaneps:\grad_{\!T}\,\wtan\;d\widetilde{x}\,dz
\,+ \int_{\Omega_{2}}\E\,
\partial_{z}\vtaneps\cdot\,\partial_{z}\wtan\;d\widetilde{x}\,dz  \\
& +\epsilon^{2}\int_{\Omega_{2}}\E
\,\grad_{\!T}\vnormeps\cdot\grad_{\!T}\wnorm\;d\widetilde{x}\,dz
+\int_{\Omega_{2}}\E\, \partial_{z}\vnormeps\,\partial_{z}\wnorm\;d\widetilde{x}\,dz  \\
& +\alpha
\int_{\,\Gamma}\big(\,\vepsone\cdot\n\,\big)\,\big(\,\wone\,\cdot\n\,\big)\,d
S
+\epsilon^{2} \int_{\Gamma} \beta \, \sqrt{\Q} \,\vtaneps \cdot
\wtan
 \,d S 
= \epsilon\,\int_{\Omega_2} {\mathbf f^{2,\epsilon}} \cdot
\w^{2} \,d\widetilde{x} \,dz ,
\end{split}
\end{equation}
\begin{multline}\label{problem fixed geometry 2}
\int_{\Omega_1}\grad\cdot\vepsone
\varphi^1 \,dx  + \epsilon \int_{\Omega_2}  \grad_{\!T}\cdot\vtaneps
\,\varphi^2
 \,d\widetilde{x} \,dz 
 +  \int_{\Omega_2}  \partial_{z} \vnormeps \,\varphi^2
 \,d\widetilde{x} \,dz
= \int_{\Omega_1} h^{1,\,\epsilon} \, \varphi^1 \, dx ,
\\
\text{ for all } [\w,\Phi]\in\X\times\Y ,
\end{multline}
\end{subequations}
defined on the common domain of reference $\Omega$.
\end{proposition}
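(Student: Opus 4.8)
The plan is to prove the equivalence by an explicit change of variables, transforming each integral in \eqref{scaled problem} term by term and matching the result against \eqref{problem fixed geometry}; the only non-computational point is to check that the transformation is a bijection of the function spaces, not merely of solutions.

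First I would fix the dictionary between the two descriptions. Given $[\veps,\peps]\in\X^{\epsilon}\times\Y^{\epsilon}$ solving \eqref{scaled problem} on $\Omega^\epsilon$, define on $\Omega$ the pulled-back channel fields $\vepstwo(\xthilde,z)$, $\pepstwo(\xthilde,z)$ by composition with $x_{\scriptscriptstyle N}=\epsilon\,z$, leaving $\vepsone,\pepsone$ unchanged on $\Omega_{1}$, and do the same for the test pair $[\w,\varphi]$. Since $x_{\scriptscriptstyle N}\mapsto z=x_{\scriptscriptstyle N}/\epsilon$ is an affine bijection $\Omega_{2}^{\epsilon}\to\Omega_{2}$, it carries $\H1bold(\Omega_{2}^{\epsilon})$ isomorphically onto $\H1bold(\Omega_{2})$ (the norms differ only by $\epsilon$-dependent constants, which is harmless for a fixed $\epsilon>0$); it maps the lateral wall $\partial\Gamma\times(0,\epsilon)$ onto $\partial\Gamma\times(0,1)$ and the top $\Gamma+\epsilon$ onto $\Gamma+1$, so the constraints defining $\X_{2}^{\epsilon}$ become precisely those defining $\X_{2}$; and it fixes the interface $\Gamma$ pointwise, so the normal traces on $\Gamma$, the coupling constraint $\v^{1}\cdot\n=\v^{2}\cdot\n$, and membership in $\Hdiv$ are all preserved. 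Hence the correspondence is a bijection $\X^{\epsilon}\times\Y^{\epsilon}\leftrightarrow\X\times\Y$, which is what legitimizes the word \emph{equivalent}.

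Next I would substitute the scaled operators \eqref{structure gradient} and \eqref{structure divergence} into the three kinds of terms that actually change, always replacing $d\xthilde\,dx_{\scriptscriptstyle N}$ by $\epsilon\,d\xthilde\,dz$. In the viscous term $\int_{\Omega_{2}^{\epsilon}}\epsilon\,\E\,\grad\vepstwo:\grad\wtwo$, the gradient splits by \eqref{structure gradient} into a tangential block carrying no factor of $\epsilon$ and a $z$-block carrying $\tfrac{1}{\epsilon^{2}}$; multiplying by the constitutive prefactor $\epsilon$ and by the Jacobian $\epsilon$ produces exactly the $\epsilon^{2}\E\big(\grad_{\!T}\vtaneps:\grad_{\!T}\wtan+\grad_{\!T}\vnormeps\cdot\grad_{\!T}\wnorm\big)$ terms and the $\E\big(\partial_{z}\vtaneps\cdot\partial_{z}\wtan+\partial_{z}\vnormeps\,\partial_{z}\wnorm\big)$ terms of \eqref{problem fixed geometry 1}. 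In the pressure term $\int_{\Omega_{2}^{\epsilon}}\pepstwo\,\div\wtwo$, formula \eqref{structure divergence} together with the Jacobian yields $\epsilon\int_{\Omega_{2}}\pepstwo\,\grad_{\!T}\cdot\wtan+\int_{\Omega_{2}}\pepstwo\,\partial_{z}\wnorm$, and the identical manipulation on the conservation law \eqref{scaled problem 2} gives the two channel terms of \eqref{problem fixed geometry 2}. The load $\int_{\Omega_{2}^{\epsilon}}\f^{\,2,\epsilon}\cdot\wtwo$ simply picks up the Jacobian factor $\epsilon$. All remaining pieces, namely the $\Omega_{1}$ volume integrals, the entry-resistance integral $\alpha\int_{\Gamma}(\vepsone\cdot\n)(\wone\cdot\n)$, and the Beavers--Joseph boundary integral $\epsilon^{2}\int_{\Gamma}\beta\,\sqrt{\Q}\,\vtaneps\cdot\wtan$, are copied verbatim because $\Omega_{1}$ and $\Gamma$ are untouched by the change of variables. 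Collecting terms yields \eqref{problem fixed geometry 1}--\eqref{problem fixed geometry 2} for all $[\w,\varphi]\in\X\times\Y$; running the same computation with the push-forward $z\mapsto x_{\scriptscriptstyle N}=\epsilon z$ gives the converse implication.

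I do not expect a genuine obstacle. The whole content is bookkeeping of three independent sources of powers of $\epsilon$: the constitutive scaling in \eqref{Newtonian fluid}, the $\tfrac{1}{\epsilon}\partial_{z}$ appearing in \eqref{structure gradient}--\eqref{structure divergence}, and the Jacobian $dx_{\scriptscriptstyle N}=\epsilon\,dz$. The one item worth a careful sentence is the space bijection described in the first step, since without it the equivalence of the two problems would not follow merely from equality of the transformed integrands.
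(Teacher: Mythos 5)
Your proposal is correct and follows essentially the same route as the paper: direct substitution of the scaled gradient and divergence identities \eqref{structure gradient}--\eqref{structure divergence} together with the Jacobian factor $dx_{\scriptscriptstyle N}=\epsilon\,dz$, term by term. The one thing you spell out that the paper leaves implicit is that the rescaling induces a bijection $\X^{\epsilon}\times\Y^{\epsilon}\leftrightarrow\X\times\Y$ respecting the boundary and interface constraints, which is the right thing to emphasize since it is what upgrades the term-by-term match of integrands to a genuine equivalence of variational problems.
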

\begin{proof}
The proof follows from direct substitution together with the
identities \eqref{structure gradient} and \eqref{structure
divergence}.
\qed
\end{proof}
\begin{remark}\label{Rem Notation}
In order to avoid overloaded notation, from now on, we denote the
volume integrals by $\int_{\Omega_{1}} F = \int_{\Omega_{1}} F \, dx$
and $\int_{\Omega_{2}} F = \int_{\Omega_{2}} F \, d\xthilde\,dz$. The
explicit notation $\int_{\Omega_{2}} F \, d\xthilde\,dz$ will be used
only for those cases where specific calculations are needed. Both
notations will be clear from the context.
\end{remark}
%
%
%
%
\begin{proposition}\label{Th Strong Form}
The Problem \eqref{problem fixed geometry} is a weak
formulation of the strong form
\begin{subequations}\label{strong problem fixed geometry}
\begin{equation}\label{Darcy's law Strong problem}
\Q \,\vepsone +\grad \pepsone= \0\,,
\end{equation}
\begin{equation}\label{mass conservation Omega 1}
\grad\cdot\vepsone =
h^{1,\epsilon}\quad\text{in}\quad\Omega_{1}.
\end{equation}
\begin{equation}  \label{tangential momentum conservation modified}
\epsilon \,\grad_{\!T}\pepstwo-\epsilon^{2}\,\grad_{\!T}\cdot
\E\grad_{\!T}\,\vtaneps
-\,\partial_z\,\E
\,\partial_z\,\vtaneps
=\epsilon \,\f^{2,\,\epsilon}_{T}\,,
\end{equation}
\begin{equation}  \label{normal momentum conservation law}
\partial_z\, \pepstwo-\epsilon^{2}\,\grad_{\! T}\,\cdot\,\E
\grad_{\!T}\vnormeps-
\,\partial_z\,\E\,\partial_z \vnormeps = \epsilon\,
\f^{2,\,\epsilon}_{N}\,,
\end{equation}
\begin{equation}  \label{mass conservation Omega 2}
\epsilon\,\grad_{\! T}\cdot\vtaneps + \partial_z \vnormeps
=\,0\quad\text{in}\quad\Omega_2.
\end{equation}
\begin{equation}\label{strong normal interface condition}
\epsilon\,\E\,\partial_z\,\vnormeps -\pepstwo+\pepsone=\alpha\,
\vepsone\cdot\n\,,
\end{equation}
\begin{equation}  \label{scaled Beavers - Joseph condition}
\epsilon\,\E\,\frac{\partial\,\vepstwo_\tau}{\partial\,\n}
= \epsilon\,\E\,\partial_z\,\vtaneps
=\epsilon^{2}\beta\,\sqrt{\Q}\,\vtaneps\,,
\end{equation}
\begin{equation}  \label{scaled admissability constraint}
\vepsone\cdot\n=\vepstwo\cdot\n\quad\text{on}\quad\Gamma ,
\end{equation}
\begin{equation}  \label{boundary condition Omega 1}
\pepsone=0\quad\text{on}\quad\partial\Omega_1-\Gamma ,
\end{equation}
\begin{equation}  \label{no slip boundary condition Omega 2}
\vepstwo=\0\quad\text{on}\quad\partial\Gamma\times(0,1) ,
\end{equation}
\begin{equation}  \label{boundary conditio Omega 2}
\vepstwo\cdot\n = \vnormeps =
0\,,
\end{equation}
\begin{equation}  \label{strong tangential boundary condition}
\E\,\frac{\partial\,\vepstwo_\tau}{\partial\,\n} = \E\,\partial_z\,\vtaneps=\0\quad\text{on}\quad\Gamma+1 .
\end{equation}
\end{subequations}
\end{proposition}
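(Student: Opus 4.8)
The plan is to prove the two directions of the asserted equivalence: that every solution of \eqref{problem fixed geometry} satisfies \eqref{strong problem fixed geometry} (with the second-order operators acting on $\vepstwo$ read distributionally, since a priori only $\vepstwo\in\H1bold(\Omega_2)$), and conversely that every sufficiently smooth solution of \eqref{strong problem fixed geometry} satisfies \eqref{problem fixed geometry}. The method is the standard one: feed suitably localized test functions into the weak form, integrate by parts, and peel off first the interior equations and then the natural boundary and interface conditions.

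For the forward direction the equations would be recovered in the following order. First, taking $\w=\0$ in \eqref{problem fixed geometry 2} with $\varphi=[\varphi^{1},0]$ and then $\varphi=[0,\varphi^{2}]$ arbitrary yields the mass balances \eqref{mass conservation Omega 1} and \eqref{mass conservation Omega 2}. Next, taking $\varphi=0$ and $\w=[\wone,\0]$ with $\wone\in[C_c^{\infty}(\Omega_1)]^{N}$ in \eqref{problem fixed geometry 1} and integrating $-\int_{\Omega_1}\pepsone\,\div\wone$ by parts gives Darcy's law \eqref{Darcy's law Strong problem} as a distributional identity; in particular $\grad\pepsone=-\Q\vepsone\in\mathbf{L}^{2}(\Omega_1)$, so $\pepsone\in H^{1}(\Omega_1)$ and has a trace on $\partial\Omega_1$. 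Then, taking $\w=[\wone,\0]$ with $\wone\in\Hdiv(\Omega_1)$ and $\wone\cdot\n=0$ on $\Gamma$ (so the admissibility constraint and the $\alpha$-integral play no role), integrating by parts and invoking Darcy's law leaves $\langle\wone\cdot\n,\pepsone\rangle_{\partial\Omega_1}=0$; since by Lemma \ref{Th Surjectiveness from Hdiv to H^1/2} these normal traces exhaust $H^{-1/2}(\partial\Omega_1-\Gamma)$, this is the drained condition \eqref{boundary condition Omega 1}.

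For the flow in $\Omega_2$, take $\varphi=0$ and $\w=[\0,\wtwo]$ with $\wtwo$ supported in $\Omega_2$, first with vanishing normal part and then with vanishing tangential part; integrating by parts in $\xthilde$ and in $z$ produces the tangential and normal momentum equations \eqref{tangential momentum conservation modified} and \eqref{normal momentum conservation law}. The remaining interface and boundary conditions come out as natural conditions. With tangential $\wtwo$ supported near $\Gamma+1$ and vanishing near $\Gamma$, the $z$-integration by parts combined with \eqref{tangential momentum conservation modified} leaves only the boundary term on $\Gamma+1$, giving \eqref{strong tangential boundary condition}; with tangential $\wtwo$ reaching $\Gamma$ (there $\wtwo\cdot\n=0$, so one may keep $\wone=\0$) the same computation, now also picking up the Beavers--Joseph integral of \eqref{problem fixed geometry 1}, gives \eqref{scaled Beavers - Joseph condition}; and with $\w=[\wone,\wtwo]$ having vanishing tangential part and prescribed common normal trace on $\Gamma$ (built via Lemma \ref{Th Surjectiveness from Hdiv to H^1/2}), collecting on $\Gamma$ the boundary terms from the integrations by parts in both $\Omega_1$ and $\Omega_2$, together with the interior equations and the $\alpha$-integral, yields the normal interface condition \eqref{strong normal interface condition}. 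The conditions \eqref{scaled admissability constraint}, \eqref{no slip boundary condition Omega 2} and \eqref{boundary conditio Omega 2} hold by membership in $\X$.

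The converse is routine: multiply \eqref{Darcy's law Strong problem} and \eqref{mass conservation Omega 1} by $\wone$ and $\varphi^{1}$, multiply \eqref{tangential momentum conservation modified}, \eqref{normal momentum conservation law} and \eqref{mass conservation Omega 2} by the tangential and normal parts of $\wtwo$ and by $\varphi^{2}$, integrate over the respective subdomains, integrate by parts, and use the interface conditions \eqref{strong normal interface condition}--\eqref{scaled admissability constraint} and the boundary conditions \eqref{boundary condition Omega 1}--\eqref{strong tangential boundary condition} to eliminate the boundary terms, arriving at \eqref{problem fixed geometry}. I expect the main obstacle to be the bookkeeping at the interface $\Gamma$: because the admissibility constraint $\wone\cdot\n=\wtwo\cdot\n$ couples the two components of the test space, one cannot vary $\wone$ and $\wtwo$ independently near $\Gamma$, so the boundary terms arising from $\Omega_1$ (involving $\pepsone$ and the $\alpha$-term) must be combined with those from $\Omega_2$ (involving $\epsilon\,\E\,\partial_z\vepstwo$ and $\pepstwo$), split into tangential and normal parts, before the interface conditions can be read off; correctly tracking the orientation of the outward normals on $\Gamma$ and on $\Gamma+1$ under the scaling $x_{\scriptscriptstyle N}=\epsilon z$ is the step where sign errors are easiest to make.
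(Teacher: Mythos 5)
Your proposal is correct and takes essentially the same route as the paper, which only offers a brief sketch (interior equations by compactly supported test functions, then boundary and interface conditions by integration by parts with suitably chosen traces, with the admissibility constraint coming from membership in $\X$); you simply spell out those same steps, including the observation that Darcy's law lifts $\pepsone$ to $H^1(\Omega_1)$ so that its boundary trace is meaningful, and the correct use of Lemma~\ref{Th Surjectiveness from Hdiv to H^1/2} to exhaust the normal traces on $\partial\Omega_1-\Gamma$ and on $\Gamma$. The only cosmetic slip is the phrase ``taking $\w=\0$ in \eqref{problem fixed geometry 2}'': that equation contains no $\w$, so the choice is vacuous there, but this does not affect the argument.
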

\begin{sketch}
The strong Problem \eqref{strong problem fixed geometry} is obtained
using the standard procedure for recovering strong forms. First the
strong equations \eqref{Darcy's law Strong problem}, \eqref{mass
conservation Omega 1}, \eqref{tangential momentum conservation
modified}, \eqref{normal momentum conservation law} and \eqref{mass
conservation Omega 2} are recovered by testing the weak variational
Problem \eqref{problem fixed geometry} with compactly supported
functions. Next, the standard integration by parts with suitable test
functions recovers the boundary conditions \eqref{boundary condition
Omega 1}, \eqref{no slip boundary condition Omega 2}, \eqref{boundary
conditio Omega 2}, \eqref{strong tangential boundary condition} and
the interface conditions \eqref{strong normal interface condition},
\eqref{scaled Beavers - Joseph condition}, respectively. Finally, the
admissibility constraint \eqref{scaled admissability constraint} comes
from the modeling space $\X$ defined in \eqref{Def Coupled Velocities
Space}.
\qed
\end{sketch}
%
%
%
%
%
%
\section{Convergence Statements}  \label{sec-convergence}
%
%
%
%
We begin this section recalling a classical space. 
\begin{definition}\label{Def One Derivative Space}
Let $\Omega_{2} \defining \Gamma \times (0,1)$, define the Hilbert space
\begin{subequations}\label{Def Higher Order Normal Trace Space}
\begin{equation}\label{H partial z}
\Hpartial\defining
\big\{ u\in L^{2}(\Omega_{2}):
\partial_z\,u\in\,L^{2}(\Omega_{2})\big\} ,
\end{equation}
endowed with the inner product
\begin{equation}\label{inner product partial z}
\big\langle\,u, v\,\big\rangle_{ 
\Hpartial}
\defining \int_{\,\Omega_2}(u\,v + \partial_z  u \, \partial_z v\,)\,dx .
\end{equation}
\end{subequations}
\end{definition}
\begin{lemma}\label{Th Trace on One Derivative Space}
Let $\Hpartial$ be the space introduced in Definition \ref{Def One
Derivative Space}, then the trace map $u \mapsto u \big\vert_{\Gamma}$
from $\Hpartial$ to $L^{2}(\Gamma)$ is well-defined. Moreover, the
following Poincar\'e-type inequalities hold in this space
\begin{subequations}\label{Ineq Estimates on One Derivative Space}
\begin{equation}\label{Ineq Trace on One Derivative Space}
\big\Vert u\big\Vert_{0,\Gamma}\;
\leq 
\sqrt{2} \, \Big(\Vert u \Vert_{0, \Omega_{2}}
+ \big\Vert  \partial_{\,z}\,u\big\Vert_{0,\Omega_{2}} \Big),
\end{equation}
\begin{equation}\label{Ineq Conrol by Trace on One Derivative Space}
\big\Vert u\big\Vert_{0,\Omega_{2}}\;
\leq \sqrt{2} \, \Big(\big\Vert 
\partial_{\,z}\,u\big\Vert_{0,\Omega_{2}}
+\Vert u \Vert_{0,\Gamma} \Big),
\end{equation}
\end{subequations}
for all $u \in \Hpartial$.
\end{lemma}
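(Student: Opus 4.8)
The plan is to reduce everything to the one-dimensional situation on the fibre $\{\widetilde{x}\}\times(0,1)$ and then integrate over $\widetilde{x}\in\Gamma$. First I would establish the estimates for a smooth function $u\in C^{1}(\overline{\Omega_{2}})$ and then pass to general $u\in\Hpartial$ by density, using that $C^{1}(\overline{\Omega_{2}})$ is dense in $\Hpartial$ (this density, together with the resulting continuity of the map $u\mapsto u|_{\Gamma}$ from the estimate \eqref{Ineq Trace on One Derivative Space}, is exactly what makes the trace well-defined on all of $\Hpartial$, so the trace statement is a corollary of the first inequality). For fixed $\widetilde{x}$, write for $z\in(0,1)$ the identity $u(\widetilde{x},0)=u(\widetilde{x},z)-\int_{0}^{z}\partial_{z}u(\widetilde{x},s)\,ds$, so that $|u(\widetilde{x},0)|\leq |u(\widetilde{x},z)|+\int_{0}^{1}|\partial_{z}u(\widetilde{x},s)|\,ds$; integrating this in $z$ over $(0,1)$ (the left side is constant in $z$) gives $|u(\widetilde{x},0)|\leq\int_{0}^{1}|u(\widetilde{x},z)|\,dz+\int_{0}^{1}|\partial_{z}u(\widetilde{x},s)|\,ds$.

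Next I would square this pointwise-in-$\widetilde{x}$ bound, use $(a+b)^{2}\leq 2a^{2}+2b^{2}$ and Cauchy--Schwarz on each $(0,1)$-integral (which costs nothing since the interval has length one), obtaining $|u(\widetilde{x},0)|^{2}\leq 2\int_{0}^{1}|u(\widetilde{x},z)|^{2}\,dz+2\int_{0}^{1}|\partial_{z}u(\widetilde{x},s)|^{2}\,ds$. Integrating over $\widetilde{x}\in\Gamma$ and taking square roots yields $\|u\|_{0,\Gamma}\leq\sqrt{2}\,\big(\|u\|_{0,\Omega_{2}}^{2}+\|\partial_{z}u\|_{0,\Omega_{2}}^{2}\big)^{1/2}$, which is even slightly stronger than \eqref{Ineq Trace on One Derivative Space} since $(A^{2}+B^{2})^{1/2}\leq A+B$; so \eqref{Ineq Trace on One Derivative Space} follows. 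For \eqref{Ineq Conrol by Trace on One Derivative Space} the argument is symmetric: from $u(\widetilde{x},z)=u(\widetilde{x},0)+\int_{0}^{z}\partial_{z}u(\widetilde{x},s)\,ds$ one gets $|u(\widetilde{x},z)|\leq |u(\widetilde{x},0)|+\int_{0}^{1}|\partial_{z}u(\widetilde{x},s)|\,ds$, and squaring, integrating first in $z\in(0,1)$ and then in $\widetilde{x}\in\Gamma$, gives $\|u\|_{0,\Omega_{2}}^{2}\leq 2\|u\|_{0,\Gamma}^{2}+2\|\partial_{z}u\|_{0,\Omega_{2}}^{2}$, hence \eqref{Ineq Conrol by Trace on One Derivative Space}.

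There is no genuine obstacle here; the only point requiring a little care is the justification that the fibrewise arguments are legitimate, i.e.\ that for $u\in\Hpartial$ the slice $z\mapsto u(\widetilde{x},z)$ is absolutely continuous with derivative $\partial_{z}u(\widetilde{x},\cdot)$ for a.e.\ $\widetilde{x}$, and that the trace at $z=0$ agrees with $u|_{\Gamma}$; this is standard for functions with one weak derivative in a product domain, and can be bypassed altogether by simply proving both displayed inequalities for $u\in C^{1}(\overline{\Omega_{2}})$ and invoking density of $C^{1}(\overline{\Omega_{2}})$ in $\Hpartial$ (with respect to the norm \eqref{inner product partial z}) to extend them, the extension of the trace operator being forced by \eqref{Ineq Trace on One Derivative Space}. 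I would present the smooth-function computation and then add one sentence invoking density.
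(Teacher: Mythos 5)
Your proof is correct and follows essentially the same route as the paper, which simply invokes the fundamental theorem of calculus on a dense class of smooth functions and then passes to the limit; you have spelled out the fibrewise computation and the density step that the paper leaves implicit. The only cosmetic difference is that the paper cites density of $C^{\infty}(\Omega_{2})$ while you use $C^{1}(\overline{\Omega_{2}})$, both of which are standard and serve the same purpose.
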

\begin{proof}
The proof is a direct application of the fundamental theorem of
calculus on the smooth functions $C^{\infty}(\Omega_{2})$ which is a
dense subspace in $\Hpartial$.
\qed
\end{proof}
In order to derive convergence statements, it will be shown, accepting the next hypothesis, that the sequence of solutions is globally bounded.
\begin{hypothesis}\label{Hyp Bounds on the Forcing Terms}
In the following, it will be assumed that the sequences
$\{\f^{2,\epsilon}: \epsilon > 0\}\subseteq
\mathbf{L}^{2}(\Omega_{2})$ and $\{h^{1,\epsilon}: \epsilon >
0\}\subseteq L^{2}(\Omega_{1})$ are bounded, i.e., there exists $C>0$
such that
\begin{align}\label{Ineq Bounds on the Forcing Terms}
& \big\Vert \f^{2, \epsilon} \big\Vert_{0, \Omega_{2} }
\leq  C,&
& \big\Vert h^{1, \epsilon} \big\Vert_{0, \Omega_{1} } 
\leq C,&
& \text{for all } \, \epsilon > 0 . 
\end{align}
\end{hypothesis}
\begin{theorem}\label{Th A-priori Estimates of Velocity}
Let $[\veps, \peps]\in \X\times \Y$ be the solution to the Problem
\eqref{problem fixed geometry}. There exists a $K>0$ such that
\begin{equation}\label{general a priori estimate}
\begin{split}
\big\Vert\vepsone\big\Vert_{0,\Omega_{1}}^{2}
& +\big\Vert 
\,\grad_{\!T}\big(\,\epsilon\,\vtaneps\,\big)\big\Vert_{0,\Omega_{2}}^{2}
 +\big\Vert \partial_{z}\vtaneps\big\Vert_{0,\Omega_{2}}^{2}\\
& +\big\Vert \epsilon\,\grad_{\!T}\vnormeps
\big\Vert_{0,\Omega_{2}}^{2}
+\big\Vert \partial_{z}\vnormeps\big\Vert_{0,\Omega_{2}}^{2}
+\big\Vert\vnormeps \big\Vert_{0,\Gamma}^{2}
+\big\Vert\epsilon\,\vtaneps \big\Vert_{0,\Gamma}^{2}
 \leq K ,
 \qquad  \qquad \text{for all }\, \epsilon > 0.
 \end{split}
\end{equation}
\end{theorem}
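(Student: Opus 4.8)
The plan is to derive the a-priori estimate \eqref{general a priori estimate} by the standard energy method: test the variational problem \eqref{problem fixed geometry} with the solution itself, extract the quadratic form on the left, and bound the right-hand side using Cauchy--Schwarz together with the Poincar\'e-type inequalities of Lemma \ref{Th Trace on One Derivative Space} and Lemma \ref{Th Surjectiveness from Hdiv to H^1/2}.

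\medskip

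\textbf{Step 1: Produce a divergence-controlled test function.} The difficulty is that the natural test function is $\veps$ itself, but then \eqref{problem fixed geometry 2} with $\varphi = \peps$ only recovers $\int_{\Omega_1} h^{1,\epsilon}\pepsone$, which involves the pressure and is not yet controlled. So first I would construct an auxiliary field $\boldsymbol{\eta}^\epsilon \in \X$ (using Lemma \ref{Th Surjectiveness from Hdiv to H^1/2} on $\Omega_1$, plus a lifting in $\Omega_2$ respecting the constraint on $\Gamma$ and the no-slip/no-flux conditions) with $\grad\cdot\boldsymbol{\eta}^{1,\epsilon} = h^{1,\epsilon}$ and $\epsilon\,\grad_{\!T}\cdot\boldsymbol{\eta}^{2,\epsilon}_\tau + \partial_z \boldsymbol{\eta}^{2,\epsilon}_N = 0$, with norm bounded by $C\Vert h^{1,\epsilon}\Vert_{0,\Omega_1} \le C$ uniformly in $\epsilon$ by Hypothesis \ref{Hyp Bounds on the Forcing Terms}. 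Then $\veps - \boldsymbol{\eta}^\epsilon \in \ker(B^\epsilon)$, and I can test \eqref{problem fixed geometry 1} with $\w = \veps - \boldsymbol{\eta}^\epsilon$, which kills all pressure terms since $\w \in \ker(B)$.

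\medskip

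\textbf{Step 2: Read off the quadratic form and close the estimate.} Testing \eqref{problem fixed geometry 1} with $\veps$ gives exactly the left-hand side of \eqref{general a priori estimate} up to the coercivity constants: the Darcy term yields $C_\Q\Vert\vepsone\Vert_{0,\Omega_1}^2$ by Hypothesis \ref{Hyp Bounds on the Coefficients}, the viscous terms in $\Omega_2$ yield $\mu$ times the $\grad_{\!T}(\epsilon\vtaneps)$, $\partial_z\vtaneps$, $\epsilon\grad_{\!T}\vnormeps$, $\partial_z\vnormeps$ norms squared, the $\alpha$-term on $\Gamma$ is nonnegative, and the $\beta$-term gives $\Vert\epsilon\vtaneps\Vert_{0,\Gamma}^2$ (up to $\beta\sqrt{C_\Q}$). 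The missing $\Vert\vnormeps\Vert_{0,\Gamma}^2$ term is recovered from $\Vert\partial_z\vnormeps\Vert_{0,\Omega_2}^2 + \Vert\vnormeps\Vert_{0,\Gamma}^2 \gtrsim \Vert\vnormeps\Vert_{0,\Omega_2}^2$ — actually one uses the trace inequality \eqref{Ineq Trace on One Derivative Space} together with the boundary condition $\vnormeps = 0$ on $\Gamma+1$ to get $\Vert\vnormeps\Vert_{0,\Gamma}$ controlled by $\Vert\partial_z\vnormeps\Vert_{0,\Omega_2}$, so that term is a bonus rather than an obstacle. For the cross terms $\int A^\epsilon\veps(\boldsymbol{\eta}^\epsilon)$ arising from moving $\boldsymbol{\eta}^\epsilon$ to the right, I use Young's inequality to absorb $\tfrac{1}{2}$ of each quadratic term into the left-hand side against $C\Vert\boldsymbol{\eta}^\epsilon\Vert^2 \le C$. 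The forcing term $\epsilon\int_{\Omega_2}\f^{2,\epsilon}\cdot\w^2$ is handled by Cauchy--Schwarz: $\epsilon\Vert\f^{2,\epsilon}\Vert_{0,\Omega_2}\Vert\w^2\Vert_{0,\Omega_2}$, and $\Vert\w^2\Vert_{0,\Omega_2}$ is bounded via \eqref{Ineq Conrol by Trace on One Derivative Space} by $\Vert\partial_z\w^2\Vert_{0,\Omega_2} + \Vert\w^2\Vert_{0,\Gamma}$, both of which appear on the left; again Young absorbs them.

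\medskip

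\textbf{Main obstacle.} The one genuinely delicate point is Step 1: the uniform-in-$\epsilon$ bound on the lifting $\boldsymbol{\eta}^\epsilon$. Lemma \ref{Th Surjectiveness from Hdiv to H^1/2} applied on the fixed domain $\Omega_1$ gives a field with the right divergence and normal trace $g = \boldsymbol{\eta}^{1,\epsilon}\cdot\n$ on $\Gamma$ bounded in $H^{-1/2}(\Gamma)$; one then needs a matching extension into $\Omega_2$ whose $\X$-norm does not blow up as $\epsilon\to 0$. Because the $\epsilon$-weights sit on $\grad_{\!T}$ and not on $\partial_z$, choosing $\boldsymbol{\eta}^{2,\epsilon}$ to depend only on $z$ through an affine profile matching $g$ at $z=0$ and $0$ at $z=1$ keeps $\grad_{\!T}\boldsymbol{\eta}^{2,\epsilon}$ of order $g$ (harmless after the $\epsilon^2$ weight) while $\partial_z\boldsymbol{\eta}^{2,\epsilon}$ stays $O(1)$; the divergence-free correction in $\Omega_2$ is then arranged in the normal component. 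Verifying this construction respects all the boundary conditions defining $\X$ and delivers the $\epsilon$-uniform bound is the crux; everything after is routine Cauchy--Schwarz and Young.
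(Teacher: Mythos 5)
Your overall strategy (an energy estimate obtained by testing the variational problem with a function built from the solution) is the same as the paper's, but you diverge genuinely on the one point that actually matters: how the pressure enters. The paper simply sets $\w=\veps$ in \eqref{problem fixed geometry 1} and $\varphi=\peps$ in \eqref{problem fixed geometry 2} and adds; the mixed $B$-terms cancel on the diagonal and the right-hand side is $\epsilon\int_{\Omega_2}\f^{2,\epsilon}\cdot\vepstwo+\int_{\Omega_1}h^{1,\epsilon}\pepsone$. The leftover pressure term is then controlled not by any divergence lifting but by exploiting the strong form of Darcy's law \eqref{Darcy's law Strong problem} together with the drained boundary condition \eqref{boundary condition Omega 1}: Poincar\'e gives $\Vert\pepsone\Vert_{0,\Omega_1}\le C\Vert\grad\pepsone\Vert_{0,\Omega_1}=C\Vert\Q\vepsone\Vert_{0,\Omega_1}$, so the term is linear in $\Vert\vepsone\Vert_{0,\Omega_1}$ and absorbs into the quadratic form. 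This uses the specific Darcy structure of the problem and is elementary; your route, by contrast, is the generic mixed-formulation device of subtracting a divergence lifting $\boldsymbol\eta^\epsilon$ to reduce to $\ker(B^\epsilon)$. That route also works, but it buys generality at the cost of a nontrivial uniform-in-$\epsilon$ construction that the paper avoids entirely.

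Your lifting construction, as sketched, is also not quite right and would need repair. With $\boldsymbol\eta^{2,\epsilon}_N(\xthilde,z)=g(\xthilde)(1-z)$ you get $\partial_z\boldsymbol\eta^{2,\epsilon}_N=-g\neq 0$, so the constraint $\epsilon\,\grad_{\!T}\cdot\boldsymbol\eta^{2,\epsilon}_\tau+\partial_z\boldsymbol\eta^{2,\epsilon}_N=0$ forces $\grad_{\!T}\cdot\boldsymbol\eta^{2,\epsilon}_\tau=g/\epsilon$. A Bogovskii-type solve on $\Gamma$ with the required no-slip trace on $\partial\Gamma$ then needs the compatibility condition $\int_\Gamma g\,d\xthilde=0$, and $g=\boldsymbol\eta^{1,\epsilon}\cdot\n\vert_\Gamma$ is not free: $\int_\Gamma g=\int_{\Omega_1}h^{1,\epsilon}-\int_{\partial\Omega_1-\Gamma}\boldsymbol\eta^{1,\epsilon}\cdot\n$, which need not vanish. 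The cleanest fix (and the one that makes the $\Omega_2$ part trivial) is to build $\boldsymbol\eta^{1,\epsilon}$ with $\grad\cdot\boldsymbol\eta^{1,\epsilon}=h^{1,\epsilon}$ \emph{and} $\boldsymbol\eta^{1,\epsilon}\cdot\n=0$ on $\Gamma$, pushing the net flux out through the drained part $\partial\Omega_1-\Gamma$; e.g.\ take $\boldsymbol\eta^{1,\epsilon}=-\grad\phi$ with $-\Delta\phi=h^{1,\epsilon}$, $\phi=0$ on $\partial\Omega_1-\Gamma$, $\partial_\n\phi=0$ on $\Gamma$. Then $\boldsymbol\eta^{2,\epsilon}=\0$ satisfies every condition and the uniform bound is immediate. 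With that fix your Step~2 goes through by Cauchy--Schwarz and Young exactly as you describe, though note that the $\Vert\vnormeps\Vert_{0,\Gamma}^2$ term on the left already comes directly from the $\alpha$-integral on $\Gamma$ via the constraint $\vnormeps\vert_\Gamma=\vepsone\cdot\n\vert_\Gamma$ together with \eqref{Ineq Trace on One Derivative Space}; you should phrase it that way rather than as a ``bonus.''
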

\begin{proof}
Set $\w = \v^{\epsilon}$ in \eqref{problem fixed geometry 1} and
$\varphi = p^{\epsilon}$ in \eqref{problem fixed geometry 2}; add
them together to get
\begin{equation}\label{Eq Evaluation on the Diagonal}
\begin{split}
\int_{\Omega_1}  \Q\, \vepsone \cdot \vepsone 
& +\int_{\Omega_{2}}\E
\grad_{\!T}\big(\,\epsilon\,\vtaneps\,\big):
\grad_{\!T}\big(\,\epsilon\,\vtaneps\,\big) 
+\int_{\Omega_{2}}\E\,\partial_{z}\vtaneps\cdot
\partial_{z}\vtaneps 
\\
& +\epsilon^{2}\int_{\Omega_{2}}\E\,\grad_{\!T}\vnormeps\cdot
\grad_{\!T}\vnormeps 
+\int_{\Omega_{2}}\E\, \partial_{z}\vnormeps\,\partial_{z}\vnormeps 
\\
& +\alpha \int_{\,\Gamma} \big(\,\vepsone\cdot
\n\,\big)\,\big(\,\vepsone\cdot \n\,\big)\, d S
 + \int_{\Gamma}  \epsilon^{2}\,\beta \sqrt{\Q} \,\vtaneps \cdot
\vtaneps
 \,d S 
= \epsilon\, \int_{\Omega_2} {\mathbf f^{2,\epsilon}} \cdot
\vepstwo 
+\int_{\Omega_1} h^{1,\epsilon} 
\pepsone .
%
\end{split}
\end{equation}
The mixed terms were canceled out on the diagonal.  Applying the
Cauchy-Schwartz inequality to the right hand side and recalling the
Hypothesis \ref{Hyp Bounds on the Coefficients}, we get
\begin{multline}\label{a priori estimate}
\big\Vert\vepsone\big\Vert_{0,\Omega_{1}}^{2}
+\big\Vert 
\,\grad_{\!T}\big(\,\epsilon\,\vtaneps\,\big)\big\Vert_{0,\Omega_{2}}^{2}
+\big\Vert \partial_{z}\vtaneps\big\Vert_{0,\Omega_{2}}^{2}
+\big\Vert \epsilon\,\grad_{\!T}\vnormeps
\big\Vert_{0,\Omega_{2}}^{2}
+\big\Vert \partial_{z}\vnormeps\big\Vert_{0,\Omega_{2}}^{2}\\
+\big\Vert\vnormeps \big\Vert_{0,\Gamma}^{2}
+\big\Vert\epsilon\,\vtaneps \big\Vert_{0,\Gamma}^{2} 
\leq \frac{1}{k} \Big(\big\Vert {\mathbf
f^{2,\epsilon}}\big\Vert_{0,\Omega_{2}} \big\Vert
\,\epsilon\, \vepstwo \big\Vert_{0,\Omega_{2}}
+\int_{\Omega_1} h^{1,\,\epsilon} \, \pepsone \,\Big) .
\end{multline}
The summand involving an integral needs a special treatment in order
to attain the a-priori estimate.
\begin{equation}\label{estimate on the first integral}
\int_{\Omega_{1}} h^{1,\,\epsilon} \, \pepsone 
\leq  \big\Vert
\pepsone \big\Vert_{0,\Omega_{1}}\big\Vert
h^{1,\epsilon} \big\Vert_{0,\Omega_{1}} 
\leq C \, \big\Vert
\grad\,\pepsone \big\Vert_{0,\Omega_{1}}\big\Vert
h^{1,\epsilon} \big\Vert_{0,\Omega_{1}} 
=C \,\big\Vert \Q\,\vepsone \big\Vert_{0,\Omega_{1}}\big\Vert
h^{\,1,\,\epsilon} \big\Vert_{0,\Omega_{1}}
\leq \widetilde{C} \,\big\Vert \vepsone \big\Vert_{0,\Omega_{1}} .
\end{equation}
The second inequality holds due to Poincar\'e's inequality given that $\pepsone = 0$ on $\partial \Omega_{1} - \Gamma$, as stated in Equation \eqref{boundary condition Omega 1}. The equality holds due to \eqref{Darcy's law Strong problem}. The third inequality holds because the tensor $\Q$ and the family of sources $\{h^{1,\epsilon}:\epsilon>0\}\subset L^{2}(\Omega_{1})$ are bounded as stated in Hypothesis \ref{Hyp Bounds on the Coefficients} and \eqref{Ineq Bounds on the Forcing Terms}, Hypothesis \ref{Hyp Bounds on the Forcing Terms} respectively.
%
%
%
%
%
%
%
%
%
%
%
%
%
%
Next, we control the $L^2(\Omega_{2})$-norm of $\vepstwo$. Recalling that $\vepstwo\in \big[\Hpartial\big]^{N}$ then, a direct application of Estimate \eqref{Ineq Conrol by Trace on One Derivative Space} implies
\begin{equation}\label{poincare}
\big\Vert\vepstwo\big\Vert_{0,\Omega_{2}}\;\leq
\sqrt{2}\, \Big(\big\Vert 
\partial_{\,z}\,\vepstwo\big\Vert_{0,\Omega_{2}}
+ \Vert \vepstwo \Vert_{0,\Gamma} \Big).
\end{equation}
Combining \eqref{estimate on the first
integral}, \eqref{poincare} and the bound \eqref{Ineq Bounds on the Forcing Terms} from Hypothesis \ref{Hyp Bounds on the Forcing Terms} in \eqref{a priori estimate} we have
\begin{equation*}
\begin{split}
\big\Vert \vepsone \big\Vert_{0,\Omega_{1}}^{2}
+\big\Vert \,\grad_{\!T}\big(\,\epsilon\,\vtaneps\,\big)\big\Vert_{0,\Omega_{2}}^{2}
& +\big\Vert \partial_{z}\vtaneps\big\Vert_{0,\Omega_{2}}^{2} 
+\big\Vert \epsilon\,\grad_{\!T}\vnormeps
\big\Vert_{0,\Omega_{2}}^{2}
+\big\Vert \partial_{z}\vnormeps\big\Vert_{0,\Omega_{2}}^{2}
+\big\Vert\vnormeps \big\Vert_{0,\Gamma}^{2}
+\big\Vert\epsilon\,\vtaneps \big\Vert_{0,\Gamma}^{2}\\
& \leq C \Big[\big\Vert {\mathbf
f^{\,2,\,\epsilon}}\big\Vert_{0,\Omega_{2}} 
\sqrt{2} \, \big(\big\Vert 
\partial_{\,z}\,\big(\epsilon\,\vepstwo\big)\big\Vert_{0,\Omega_{2}}
+\big\Vert \big(\epsilon\,\vepstwo\big)\big\Vert_{0,\Gamma}\big)
+\widetilde{C}\big\Vert
\,\vepsone\big\Vert_{0,\Omega_{1}}\Big]\\
& \leq \widehat{C} 
\Big( \big\Vert 
\partial_{\,z}\,\vtaneps\big\Vert_{0,\Omega_{2}}
+ \big\Vert 
\partial_{\,z}\,\vepstwo\big\Vert_{0,\Omega_{2}}
+\big\Vert \epsilon\,\vtaneps\big\Vert_{0,\Gamma}\big. 
\big.
+\big\Vert \vnormeps\big\Vert_{0,\Gamma}+
\big\Vert
\,\vepsone\big\Vert_{0,\Omega_{1}}\Big) .
\end{split}
\end{equation*}
Using the equivalence of norms $\Vert \cdot \Vert_{\,1}\,,
\Vert \cdot \Vert_{\,2}$ for 5-D vectors yields
\begin{multline*} 
\big\Vert\vepsone\big\Vert_{0,\Omega_{1}}^{2}
+\big\Vert \,\grad_{\!T}\big(\,\epsilon\,\vtaneps\,\big)\big\Vert_{0,\Omega_{2}}^{2}
+\big\Vert \partial_{z}\vtaneps\big\Vert_{0,\Omega_{2}}^{2} 
+\big\Vert \epsilon\,\grad_{\!T}\vnormeps
\big\Vert_{0,\Omega_{2}}^{2}
+\big\Vert \partial_{z}\vnormeps\big\Vert_{0,\Omega_{2}}^{2}
+\big\Vert\vnormeps \big\Vert_{0,\Gamma}^{2}
+\big\Vert\epsilon\,\vtaneps \big\Vert_{0,\Gamma}^{2}\\
\leq C\,' \Big\{\big\Vert 
\partial_{\,z}\,\vtaneps\big\Vert^{2}_{\,0,\,\Omega_{2}}
+ \big\Vert 
\partial_{\,z}\,\vepstwo\big\Vert^{2}_{\,0,\,\Omega_{2}}
+\big\Vert \epsilon\,\vtaneps\big\Vert^{2}_{\,0,\,\Gamma}
%
+\big\Vert \vnormeps\big\Vert^{2}_{\,0,\,\Gamma}+
\big\Vert
\,\vepsone\big\Vert^{2}_{\,0,\,\Omega_{1}}\Big\}^{1/2}\\
\leq C\,\Big\{\big\Vert\vepsone\big\Vert_{0,\Omega_{1}}^{2}
+\big\Vert 
\,\grad_{\!T}\big(\,\epsilon\,\vtaneps\,\big)\big\Vert_{0,\Omega_{2}}^{2}
+\big\Vert \partial_{z}\vtaneps\big\Vert_{0,\Omega_{2}}^{2}\\
+\big\Vert \epsilon\,\grad_{\!T}\vnormeps\big\Vert_{0,\Omega_{2}}^{2}
+\big\Vert \partial_{z}\vnormeps\big\Vert_{0,\Omega_{2}}^{2}
+\big\Vert\vnormeps \big\Vert_{0,\Gamma}^{2}
+\big\Vert\epsilon\,\vtaneps \big\Vert_{0,\Gamma}^{2}
\Big\}^{1/2} .
\end{multline*}
The expression above implies the existence of a constant $K> 0$ satisfying the global Estimate \eqref{general a priori estimate}.
\qed
\end{proof}
In the next subsections we use weak convergence arguments to derive the functional setting of the limiting problem.  
%
%
%
%
\subsection{Weak Convergence of Velocity and Pressure}\label{Sec Weak Convergence os Subsequence}
%
%
%
%
We begin this part with a direct consequence of Theorem \ref{Th A-priori Estimates of Velocity}.
\begin{corollary}\label{Th Direct Weak Convergence of Velocities}
Let $[\veps, \peps]\in \X\times \Y$ be the solution to the Problem
\eqref{problem fixed geometry}. There exists a subsequence, still
denoted $\{\veps:\epsilon>0\}$ for which the following hold.
\begin{enumerate}[(i)]
\item There exist $\vone\in \Hdiv(\Omega_{1}) $ and $\vtan\in
\big[H^{1}(\Omega_{2})\big]^{N-1}$ such that
%
\begin{subequations}\label{Stmt Lower Order Velocities Weak Convergence}
\begin{equation}\label{velocity omega 1}
\vepsone\rightarrow \vone\quad\text{weakly in}\quad
\Hdiv(\Omega_1) .
\end{equation}
\begin{equation}\label{funcL2weak}
\epsilon\,\vtaneps\rightarrow\vtan
\quad\text{weakly in}\quad \big[H^{1}(\Omega_{2})\big]^{N-1} ,
\quad\text{strongly in}\quad \big[L^{2}(\Omega_{2})\big]^{N-1} ,
\end{equation}
%
%
\end{subequations}

\item There exist $\xi\in \Hpartial$ and $\eta\in \big[L^{2}(\Omega_{2})\big]^{N-1}$ such that 
\begin{subequations}\label{Stmt Higher Order Velocity Weak Convergence}
\begin{equation}\label{Eq epsilon partial tangential L2 weak}
\partial_{z}\vtaneps \rightarrow \eta \quad\text{weakly in}\quad 
\big[L^{2}(\Omega_{2})\big]^{N-1}\,, 
\quad\partial_{z} \big(\epsilon\,\vtaneps\big)
\rightarrow 0 \quad \text{strongly in}\quad 
\big[L^{2}(\Omega_{2})\big]^{N-1},
\end{equation}
\begin{equation}\label{convergence of normal velocity}
\vnormeps \rightarrow \xi \quad\text{weakly in }\, L^{2}(\Omega_{2})\,, 
\quad \big(\epsilon\,\vnormeps\big)
\rightarrow 0 \quad \text{strongly in} \; \Hpartial ,
\end{equation}
moreover, $\xi$ satisfies the interface and boundary conditions
\begin{align}\label{boundary conditions on normal velocity}
& \xi\vert_{\Gamma} = \vone\cdot \n\vert_{\Gamma} \, , &
& \xi\,(\widetilde{x},1) = 0 .
\end{align}
\end{subequations}

\item The limit function $\vtan$ satisfies that (see Figure \ref{Fig Limit Solution Schematics})
\begin{equation}\label{solutionconvergence}
\vtan=\vtan\,(\widetilde{x}) \, .
\end{equation}
\end{enumerate}
\end{corollary}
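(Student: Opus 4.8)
The plan is to deduce the $z$-independence in \eqref{solutionconvergence} from the a priori bound of Theorem~\ref{Th A-priori Estimates of Velocity} by playing a weak limit against a strong limit, once the subsequence of parts (i)--(ii) has been fixed. First I would note that \eqref{general a priori estimate} makes $\{\epsilon\,\vtaneps\}$ bounded in $[H^1(\Omega_2)]^{N-1}$: the term $\|\grad_{\!T}(\epsilon\,\vtaneps)\|_{0,\Omega_2}$ is controlled directly, $\|\partial_z(\epsilon\,\vtaneps)\|_{0,\Omega_2}=\epsilon\,\|\partial_z\vtaneps\|_{0,\Omega_2}\le\epsilon\sqrt K$, and the $L^2(\Omega_2)$-norm is then bounded by the Poincar\'e-type inequality \eqref{Ineq Conrol by Trace on One Derivative Space} applied componentwise, using the boundary term $\|\epsilon\,\vtaneps\|_{0,\Gamma}$ that also appears in \eqref{general a priori estimate}. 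Reflexivity and the Rellich--Kondrachov theorem then give a subsequence with $\epsilon\,\vtaneps\rightharpoonup\vtan$ weakly in $[H^1(\Omega_2)]^{N-1}$ and strongly in $[L^2(\Omega_2)]^{N-1}$, which is \eqref{funcL2weak}; the remaining convergences in (i)--(ii) are obtained the same way, the strong statements $\partial_z(\epsilon\,\vtaneps)\to\0$ and $\epsilon\,\vnormeps\to0$ being immediate since each factor is bounded and carries an explicit $\epsilon$, and the trace identities \eqref{boundary conditions on normal velocity} following from the weak continuity of the normal trace on $\Hdiv(\Omega_1)$ and of the trace on $\Hpartial$ together with \eqref{scaled admissability constraint} and \eqref{boundary conditio Omega 2}.

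With the subsequence fixed, (iii) is short. Weak convergence in $[H^1(\Omega_2)]^{N-1}$ in \eqref{funcL2weak} yields $\partial_z(\epsilon\,\vtaneps)\rightharpoonup\partial_z\vtan$ weakly in $[L^2(\Omega_2)]^{N-1}$; but by \eqref{Eq epsilon partial tangential L2 weak} the same sequence $\partial_z(\epsilon\,\vtaneps)=\epsilon\,\partial_z\vtaneps$ converges to $\0$ strongly in $[L^2(\Omega_2)]^{N-1}$. Uniqueness of weak limits forces $\partial_z\vtan=\0$ in $[L^2(\Omega_2)]^{N-1}$. Since $\Omega_2=\Gamma\times(0,1)$, an $L^2(\Omega_2)$ function with vanishing weak derivative in $z$ agrees a.e.\ with a function of $\widetilde{x}$ alone --- the one-dimensional fact that $w\in H^1(0,1)$ with $w'=0$ is constant, applied on a.e.\ vertical fibre via Fubini --- so $\vtan=\vtan(\widetilde{x})$, which is \eqref{solutionconvergence}.

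The hard part is not (iii) itself, which is essentially a one-line weak-versus-strong limit argument, but the preliminary bookkeeping that legitimizes \eqref{funcL2weak}: one must check that $\{\epsilon\,\vtaneps\}$ is genuinely bounded in $[H^1(\Omega_2)]^{N-1}$, and in particular control its $L^2$-norm, which is exactly where the trace estimate \eqref{Ineq Conrol by Trace on One Derivative Space} and the uniform bound on $\|\epsilon\,\vtaneps\|_{0,\Gamma}$ from Theorem~\ref{Th A-priori Estimates of Velocity} enter. Once that is in hand, the clash between the weak limit $\partial_z\vtan$ and the strong limit $\0$ closes the argument.
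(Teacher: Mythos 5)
Your proposal is correct and follows essentially the same route as the paper's proof: extract a weak $H^{1}(\Omega_{2})^{N-1}$-limit of $\epsilon\,\vtaneps$ from the a priori bound (the $L^{2}$-control via \eqref{Ineq Conrol by Trace on One Derivative Space} and the boundary term, then Rellich--Kondrachov for the strong $L^{2}$ part), obtain the remaining weak limits and trace identities from weak continuity of the relevant trace operators, and deduce \eqref{solutionconvergence} by identifying $\partial_{z}\vtan$ as the weak limit of $\partial_{z}(\epsilon\,\vtaneps)$ which, being $\epsilon\,\partial_{z}\vtaneps$, tends strongly to $\0$. You have merely made explicit the bookkeeping (the $L^{2}(\Omega_{2})$-boundedness of $\epsilon\,\vtaneps$ and the fibrewise Fubini argument) that the paper leaves implicit when it declares (iii) a ``direct consequence'' of \eqref{Eq epsilon partial tangential L2 weak}.
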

\begin{proof}
\begin{enumerate}[(i)]
\item Due to the global a-priori Estimate \eqref{general a priori
estimate} there must exist a weakly convergent subsequence and
$\vtan\in \big[H^{1}(\Omega_{2})\big]^{N-1}$ such that
\eqref{funcL2weak} holds; together with $\vone\in \Hdiv(\Omega_{1}) $
such that \eqref{velocity omega 1} holds only in the weak
$\mathbf{L}^{2}(\Omega_{1})$-sense. Because of the Hypothesis \ref{Hyp
Bounds on the Forcing Terms} and \eqref{mass conservation Omega 1},
the sequence $\{\div\vepsone:\epsilon>0\} \subset L^{2}(\Omega_{1})$
is bounded. Then, there must exist yet another subsequence, still
denoted the same, such that \eqref{velocity omega 1} holds in the weak
$\Hdiv(\Omega_{1})$-sense and the first part is complete.
\item For the higher order terms $\partial_{z}\vnormeps$,
$\partial_{z}\vtaneps$, in view of the estimate \eqref{general a
priori estimate}, there must exist $\eta \in
\big[L^{2}(\Omega_{2})\big]^{N-1}$ for which \eqref{Eq epsilon partial
tangential L2 weak} holds. Next, the estimate \eqref{general a priori
estimate} combined with \eqref{Ineq Conrol by Trace on One Derivative
Space} imply that $\big\{\,\vnormeps: \epsilon>0\,\big\}$ is a bounded
sequence in $\Hpartial$, so \eqref{convergence of normal velocity}
holds. Moreover, since the trace applications $\vnormeps \mapsto
\vnormeps\big\vert_{ \Gamma }$ and $\vnormeps \mapsto
\vnormeps\big\vert_{ \Gamma+1 }$ are continuous in $\Hpartial$ and
$\vnormeps (\widetilde{x}, 1) = 0$, the properties \eqref{boundary
conditions on normal velocity} follow. This concludes the second part.
\item The property \eqref{solutionconvergence}, is a direct
consequence of \eqref{Eq epsilon partial tangential L2 weak}.  Hence,
the proof is complete.
\qed
\end{enumerate}
\end{proof}%
\begin{lemma}\label{Th Convergence of Pressure One}
Let $\big[\veps, \peps \big]\in \X\times \Y$ be the solution of
\eqref{problem fixed geometry}. There exists a subsequence, still
denoted $\big\{\peps:\epsilon>0\big\}$ verifying the following.
\begin{enumerate}[(i)]
\item There exists $\pone\in H^{1}(\Omega_{1})$ such that
\begin{equation}\label{convergence of the pressure in Omega_1}
\pepsone \rightarrow \pone \quad \text{weakly in }
H^{1}(\Omega_{1}), \; \text{strongly in }\,
L^{2}(\Omega_{1}) .
\end{equation}
\item There exists $\ptwo\in L^{2}(\Omega_{2})$ such that
\begin{equation}\label{convergence of the pressure in Omega_2}
\pepstwo \rightarrow \ptwo\quad \text{weakly in }\,  L^{2}(\Omega_{2}) .
\end{equation}
\item The pressure $p = \big[ \pone, \ptwo \big]$ belongs to $L^{2}(\Omega)$.
\end{enumerate}
\end{lemma}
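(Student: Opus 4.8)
The plan is to treat the three assertions in order; the first and last are routine, and the uniform $L^{2}(\Omega_{2})$ bound behind (ii) carries all the difficulty.

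For (i), Darcy's law \eqref{Darcy's law Strong problem} gives $\grad\pepsone=-\Q\,\vepsone$ a.e.\ in $\Omega_{1}$, so the ellipticity of $\Q$ (Hypothesis~\ref{Hyp Bounds on the Coefficients}) together with the velocity bound \eqref{general a priori estimate} of Theorem~\ref{Th A-priori Estimates of Velocity} shows that $\{\grad\pepsone\}$ is bounded in $\mathbf{L}^{2}(\Omega_{1})$. Since $\pepsone$ vanishes on $\partial\Omega_{1}-\Gamma$ by \eqref{boundary condition Omega 1}, Poincar\'e's inequality upgrades this to a uniform bound for $\{\pepsone\}$ in $H^{1}(\Omega_{1})$; a weakly convergent subsequence in $H^{1}(\Omega_{1})$ then exists, and the compact embedding $H^{1}(\Omega_{1})\hookrightarrow L^{2}(\Omega_{1})$ (using the smoothness of $\partial\Omega_{1}-\Gamma$) yields, along a further subsequence, the strong $L^{2}(\Omega_{1})$ convergence in \eqref{convergence of the pressure in Omega_1}. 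Once (ii) is in hand, (iii) is immediate: $\pone\in H^{1}(\Omega_{1})\subset L^{2}(\Omega_{1})$, $\ptwo\in L^{2}(\Omega_{2})$, and since $\Omega_{1},\Omega_{2}$ are disjoint with $\Gamma$ null, $L^{2}(\Omega)=L^{2}(\Omega_{1})\oplus L^{2}(\Omega_{2})$, so $p=[\pone,\ptwo]\in L^{2}(\Omega)$.

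For (ii) the goal is the uniform bound $\|\pepstwo\|_{0,\Omega_{2}}\le C$. One cannot simply invoke the mixed-problem estimate \eqref{mix-est} of Theorem~\ref{Th well posedeness mixed formulation classic}: on the reference domain the bilinear form in \eqref{problem fixed geometry 1} carries the anisotropic weights $\epsilon$ and $\epsilon^{2}$, so its coercivity and inf-sup constants degenerate as $\epsilon\to0$ and the resulting estimate is not uniform. Instead I would argue by duality directly from \eqref{problem fixed geometry}. Splitting $\pepstwo=\langle\pepstwo\rangle+\widehat{p}^{\,2,\epsilon}$ into its mean over $\Omega_{2}$ and the mean-zero remainder, I would use the solvability of the divergence equation on the \emph{fixed} domain (the Stokes inf-sup already invoked in Lemma~\ref{Th B closed range epsilon}) to choose a test velocity $\w^{\epsilon}\in\X$, vanishing on $\partial\Omega_{2}-\Gamma$, whose scaled divergence $\epsilon\,\grad_{\!T}\cdot\wtan+\partial_{z}\wnorm$ equals $\widehat{p}^{\,2,\epsilon}$; inserting $\w^{\epsilon}$ into \eqref{problem fixed geometry 1} reproduces $\|\widehat{p}^{\,2,\epsilon}\|_{0,\Omega_{2}}^{2}$, while every remaining term is controlled by \eqref{general a priori estimate}, Hypothesis~\ref{Hyp Bounds on the Forcing Terms}, and the trace inequalities of Lemma~\ref{Th Trace on One Derivative Space}. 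The mean $\langle\pepstwo\rangle$ I would then pin down through the interface identity \eqref{strong normal interface condition}, which ties the boundary behaviour of $\pepstwo$ on $\Gamma$ to $\pepsone|_{\Gamma}$ (bounded by part (i)) and to normal-velocity quantities (bounded by \eqref{general a priori estimate}). This produces a uniform $L^{2}(\Omega_{2})$ bound, and passing to a weakly convergent subsequence gives \eqref{convergence of the pressure in Omega_2}.

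The main obstacle is the $\epsilon$-bookkeeping in this last step. Because of the scaling, the test velocity needed to "see" $\pepstwo$ has a tangential component of size $\mathcal{O}(1/\epsilon)$, so one must check that the terms it generates in \eqref{problem fixed geometry 1} -- in particular the one pairing $\partial_{z}\vtaneps$ against $\partial_{z}\wtan$ -- still close up. This is precisely where the decomposition of $\pepstwo$ into its vertical average and remainder is essential, and where one must exploit that each high-order velocity quantity appearing against the test function in \eqref{problem fixed geometry 1} comes paired with a compensating power of $\epsilon$ that is recorded in \eqref{general a priori estimate}. The rest of the argument is routine.
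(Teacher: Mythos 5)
Your parts (i) and (iii) are correct and coincide with the paper's proof. The difficulty is, as you say, concentrated in (ii), and there your proposal contains a genuine gap that you yourself flag but do not close.

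Your plan for (ii) asks for a test velocity $\w^\epsilon\in\X$ whose \emph{scaled} divergence $\epsilon\,\grad_{\!T}\cdot\wtan + \partial_z\wnorm$ equals $\widehat{p}^{\,2,\epsilon}$. The fixed-domain Stokes inf-sup gives a $\w$ whose \emph{unscaled} divergence equals a prescribed $L^2$ function, with $\|\w\|_{\mathbf{H}^1}$ controlled; to produce the scaled divergence you then need $\wtan$ of size $\mathcal O(1/\epsilon)$, and the terms it generates in \eqref{problem fixed geometry 1} — notably $\int_{\Omega_2}\E\,\partial_z\vtaneps\cdot\partial_z\wtan$, which carries no compensating power of $\epsilon$ — are not controlled by the a priori bound \eqref{general a priori estimate}, in which $\partial_z\vtaneps$ is bounded only in $L^2$, not small. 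You note this is ``precisely where the decomposition of $\pepstwo$ into its vertical average and remainder is essential,'' but the decomposition does not remove the $1/\epsilon$ tangential component; the obstacle remains open. Likewise, pinning down the mean $\langle\pepstwo\rangle$ via the strong interface identity \eqref{strong normal interface condition} presupposes a trace for $\pepstwo$ on $\Gamma$ with uniform control, which is not established at this point.

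The paper's argument sidesteps the problem entirely by choosing a test function with \emph{zero} tangential component in $\Omega_2$. Given $\phi\in C_0^\infty(\Omega_2)$, set $\varsigma(\widetilde{x},z)=\int_z^1\phi(\widetilde{x},t)\,dt$, take $\w^2=(\0_T,\varsigma)$, and lift the normal trace $\varsigma(\cdot,0)\in L^2(\Gamma)$ into $\Omega_1$ via Lemma~\ref{Th Surjectiveness from Hdiv to H^1/2} to obtain $\wone\in\Hdiv(\Omega_1)$ with $\wone\cdot\n=\varsigma(\cdot,0)$ on $\Gamma$. Since $\partial_z\varsigma=-\phi$, the only divergence term in \eqref{problem fixed geometry 1} that survives is $-\int_{\Omega_2}\pepstwo\,\partial_z\wnorm=\int_{\Omega_2}\pepstwo\,\phi$; every tangential term vanishes identically because $\wtan=\0$, and the remaining terms are bounded (or carry extra powers of $\epsilon$) by \eqref{general a priori estimate} and Hypothesis~\ref{Hyp Bounds on the Forcing Terms}. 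Taking $\limsup_{\epsilon\downarrow 0}$ gives $\big|\int_{\Omega_2}\pepstwo\phi\big|\le \widetilde C\|\phi\|_{0,\Omega_2}$ for all $\phi\in C_0^\infty(\Omega_2)$, hence a uniform $L^2(\Omega_2)$ bound and the weak convergence. No mean/mean-zero splitting, no $1/\epsilon$-sized test field, and no trace argument for $\pepstwo$ are needed.
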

\begin{proof}
\begin{enumerate}[(i)]
\item Due to \eqref{Darcy's law Strong problem} and \eqref{a priori
estimate} it follows that
\begin{equation*}
\big\Vert \grad\,\pepsone\big\Vert_{0,\Omega_{1}}=
\big\Vert \sqrt{\Q}\,\vepsone\big\Vert_{0,\Omega_{1}}\leq C ,
\end{equation*}
with $C>0$ an adequate positive constant. From \eqref{boundary
condition Omega 1}, the Poincar\'e inequality implies there exists
a constant $\widetilde{C}>0$ satisfying
\begin{align}\label{Ineq H^1 boundedness of pressure one}
& \big\Vert \pepsone\big\Vert_{1,\Omega_{1}}
\leq \widetilde{C} \, \big\Vert \grad\,\pepsone\big\Vert_{0,\Omega_{1}}, &
& \text{for all }\, \epsilon > 0  .
\end{align}
Therefore, the sequence $\{\pepsone: \epsilon > 0\}$ is bounded in
$H^{1}(\Omega_{1})$ and the Statement \eqref{convergence of the
pressure in Omega_1} follows directly.

\item 
In order to show that the sequence $\{\pepstwo: \epsilon > 0 \}$ is
bounded in $L^{2}(\Omega_{2})$, take any $\phi\in
C_{0}^{\infty}(\Omega_{2})$ and define the auxiliary function
\begin{equation}\label{negative antiderivative}
\varsigma(\widetilde{x},z)\defining
\int_{z}^{1}\phi(\widetilde{x},t)\,dt .
\end{equation}
By construction it is clear that $\Vert \varsigma
\Vert_{\,1,\,\Omega_2}\leq C \Vert \phi \Vert_{0,\Omega_2}$. Since
$\varsigma \, \ind_{\Gamma} \in L^{2}(\Gamma)\subseteq
H^{-1/2}(\partial \Omega_{1})$, due to Lemma \ref{Th Surjectiveness
from Hdiv to H^1/2}, there must exist a function $\wone\in
\Hdiv\,(\Omega_{1})$ such that
$\wone\cdot\n=\w^{2}\cdot\n=\varsigma(\widetilde{x},0) =
\int_{0}^{1}\phi\,(\widetilde{x},t)\,dt$ on $\Gamma$, $\wone\cdot\n =
0$ on $\partial\,\Omega_{1} - \Gamma$ and $\Vert \wone
\Vert_{\,\Hdiv(\Omega_1)}\leq \Vert \varsigma \Vert_{0,\Gamma}\leq C
\Vert \phi \Vert_{0,\Omega_2}$. Hence, the function
$\w^{2}=\big(\0_{T},\varsigma(\widetilde{x},z)\big)$ is such that $\w
\defining [\w^{1},\w^{2}]\in\X$; testing \eqref{problem fixed geometry
1} with $\w$ yields
\begin{multline}\label{relation partial z pressure and normal velocity}
\int_{\Omega_1} \Q\,\vepsone\cdot\wone 
- \int_{\Omega_1} \pepsone\,
\grad\cdot\wone 
%
+\alpha\int_{\,\Gamma}\big(\vepsone\cdot\n\big)\big(\wone\cdot\n\,\big)dS
+\int_{\Omega_2}\,\pepstwo\,\phi 
\\
+\epsilon^{2}\int_{\Omega_2}\E\,\grad_{\,T}\,\vnormeps
\cdot\,\grad_{\,T}\,\varsigma 
-\int_{\Omega_2}\E\,\partial_z\,\vnormeps\,\phi 
=\epsilon\,\int_{\Omega_2}\f^{\,2,\,\epsilon}_{N}\,\varsigma . 
\end{multline}
Applying the Cauchy-Schwarz inequality to the integrals and reordering we get
\begin{equation*}
\begin{split}
\Big\vert\int_{\Omega_2}\,
\pepstwo\,\phi 
\Big\vert
\leq &
C_1\big\Vert \vepsone\big\Vert_{0,\Omega_{1}}\big\Vert \wone\big\Vert_{0,\Omega_{1}}
+ \big\Vert \pepsone\big\Vert_{0,\Omega_{1}}\big\Vert \grad\cdot\wone\big\Vert_{0,\Omega_{1}}
+ C_2\big\Vert \vepsone\cdot\n\big\Vert_{0,\Gamma} \big\Vert \varsigma\big\Vert_{0,\Gamma} 
\\
& 
+\epsilon\,C_3 \, \big\Vert \grad_{\,T}\,\big(\epsilon\,\vnormeps\big)\big\Vert_{0,\Omega_2}
\big\Vert \grad_{T}\,\varsigma(\widetilde{x},z)\big\Vert_{0,\Omega_2}
+C_4 \, \big\Vert \partial_z\,\vnormeps\big\Vert_{0,\Omega_2}
\big\Vert \phi\big\Vert_{0,\Omega_2}
+ \big\Vert \epsilon\,\f^{\,2,\,\epsilon}_{N}\big\Vert_{0,\Omega_2}
\big\Vert \varsigma\big\Vert_{0,\Omega_2} .
\end{split}
\end{equation*}
Notice that due to the construction, all the norms depending on
$\wone$ and $\varsigma$, with the exception of $\grad_{\!T}\varsigma$,
are controlled by the norm $\Vert \phi \Vert_{0,\Omega_2}$. Therefore,
the above expression can be reduced to
\begin{multline*}
\Big\vert\int_{\Omega_2}\,
\pepstwo\,\phi 
\Big\vert\leq
C\Big(\big\Vert \vepsone\big\Vert_{0,\Omega_{1}}
+\big\Vert \pepsone\big\Vert_{0,\Omega_{1}}
+\big\Vert \vepsone\cdot\n\big\Vert_{0,\Gamma}
+\big\Vert \partial_z\,\vnormeps\big\Vert_{0,\Omega_2}
+ \big\Vert \epsilon\,\f^{2,\epsilon}_{N}\big\Vert_{0,\Omega_2}\Big)
 \Vert \phi \Vert_{0,\Omega_2}
\\
+\epsilon\big\Vert \grad_{\! T}\,\big(\epsilon\,\vnormeps\big)\big\Vert_{0,\Omega_2}
\big\Vert \grad_{\! T}\,\varsigma(\widetilde{x},z)\big\Vert_{0,\Omega_2}
%
\leq\,\widetilde{C}\,\Big(\Vert \phi \Vert_{0,\Omega_2}
+\epsilon
\big\Vert \grad_{\! T}\,\varsigma(\widetilde{x},z)\big\Vert_{0,\Omega_2}\Big) .
\end{multline*}
The last inequality holds since all the summands in the parenthesis
are bounded due to the estimates \eqref{general a priori estimate},
\eqref{Ineq H^1 boundedness of pressure one} and the Hypothesis
\ref{Hyp Bounds on the Forcing Terms}. Taking upper limit when
$\epsilon\rightarrow 0$, in the previous expression we get
\begin{equation}\label{bound on epsilon pressure in
L2}
\limsup_{\,\epsilon\,\downarrow\,0}\Big\vert\int_{\Omega_2}\,
\pepstwo\,\phi 
\Big\vert\leq \widetilde{C} \, \big\Vert \phi\big\Vert_{0,\Omega_2} .
\end{equation}
Since the above holds for any $\phi\in C_0^{\infty}(\Omega_{2})$, we
conclude that the sequence $\big\{\,\pepstwo:\epsilon>0\,\big\}\subset
L^{2}(\Omega_{2})$ is bounded and consequently \eqref{convergence of
the pressure in Omega_2} follows.

\item From the previous part it is clear that the sequence
$\big\{[\pepsone, \pepstwo]: \epsilon > 0\big\}$ is bounded in
$L^{2}(\Omega)$, so the proof is complete.
\qed
\end{enumerate}
\end{proof}
Finally, we identify the dependence of $\ptwo$ and $\xi$.
\begin{theorem}\label{Th Dependence of xi and Pressure 2}
Let $\xi$, $\ptwo$ be the higher order limiting term in Corollary
\ref{Th Direct Weak Convergence of Velocities} (ii) and the limit
pressure in $\Omega_{2}$ in Lemma \ref{Th Convergence of Pressure One}
(ii), respectively. Then (see Figure \ref{Fig Limit Solution Schematics}) we have
\begin{subequations}\label{Eq Dependence of xi and Pressure 2}
\begin{equation}  \label{partial xi dependence}
\partial_z\,\xi = \partial_z\,\xi\,(\widetilde{x}) ,
\end{equation}
\begin{equation}  \label{pressure two dependence}
\ptwo = \ptwo (\widetilde{x}) .
\end{equation}
\end{subequations}
\end{theorem}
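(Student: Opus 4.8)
The plan is to prove \eqref{partial xi dependence} first, from the scaled mass conservation law, and then to deduce \eqref{pressure two dependence} from the scaled normal momentum balance, exploiting the $z$-independence of $\vtan$ already recorded in \eqref{solutionconvergence}. Both steps are carried out along the subsequence fixed in Corollary \ref{Th Direct Weak Convergence of Velocities} and Lemma \ref{Th Convergence of Pressure One}.

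For \eqref{partial xi dependence} I would start from \eqref{mass conservation Omega 2}, valid a.e.\ in $\Omega_{2}$, rewritten as $\partial_{z}\vnormeps=-\grad_{\!T}\cdot\big(\epsilon\,\vtaneps\big)$. Letting $\epsilon\downarrow0$, the left-hand side converges weakly in $L^{2}(\Omega_{2})$ to $\partial_{z}\xi$, because $\{\vnormeps\}$ is bounded in $\Hpartial$ and $\vnormeps\rightharpoonup\xi\in\Hpartial$; the right-hand side converges weakly in $L^{2}(\Omega_{2})$ to $-\grad_{\!T}\cdot\vtan$, because $\epsilon\,\vtaneps\rightharpoonup\vtan$ in $\big[H^{1}(\Omega_{2})\big]^{N-1}$ by \eqref{funcL2weak}. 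Uniqueness of the weak limit gives $\partial_{z}\xi=-\grad_{\!T}\cdot\vtan$ a.e.\ in $\Omega_{2}$; since $\vtan=\vtan(\widetilde{x})$ by \eqref{solutionconvergence}, the right-hand side does not depend on $z$, which is precisely \eqref{partial xi dependence}. (Equivalently, one may test \eqref{problem fixed geometry 2} with $\varphi^{1}=0$ and $\varphi^{2}\in C_{0}^{\infty}(\Omega_{2})$ and pass to the limit.)

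For \eqref{pressure two dependence} the idea is to test \eqref{problem fixed geometry 1} with $\w=[\,\0,(\,\0,\phi)\,]$, $\phi\in C_{0}^{\infty}(\Omega_{2})$, which lies in $\X$ since $\phi$ is compactly supported in $\Omega_{2}$. As $\w^{1}=\0$ and the tangential component of $\w^{2}$ vanishes, all $\Omega_{1}$-integrals, the $\alpha$- and $\beta$-interface integrals and the tangential bulk integrals drop out, leaving
\begin{equation*}
-\int_{\Omega_{2}}\pepstwo\,\partial_{z}\phi
+\epsilon^{2}\int_{\Omega_{2}}\E\,\grad_{\!T}\vnormeps\cdot\grad_{\!T}\phi
+\int_{\Omega_{2}}\E\,\partial_{z}\vnormeps\,\partial_{z}\phi
=\epsilon\int_{\Omega_{2}}\f^{\,2,\,\epsilon}_{N}\,\phi .
\end{equation*}
Now pass to the limit: $\pepstwo\rightharpoonup\ptwo$ weakly in $L^{2}(\Omega_{2})$ by Lemma \ref{Th Convergence of Pressure One} and $\partial_{z}\vnormeps\rightharpoonup\partial_{z}\xi$ weakly in $L^{2}(\Omega_{2})$ by the previous step, while the second integral is $O(\epsilon)$ since $\|\epsilon\,\grad_{\!T}\vnormeps\|_{0,\Omega_{2}}$ is bounded by \eqref{general a priori estimate}, and the right-hand side is $O(\epsilon)$ by Hypothesis \ref{Hyp Bounds on the Forcing Terms}. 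This yields $\int_{\Omega_{2}}\big(\E\,\partial_{z}\xi-\ptwo\big)\partial_{z}\phi=0$ for every $\phi\in C_{0}^{\infty}(\Omega_{2})$, so the distributional $z$-derivative of $\E\,\partial_{z}\xi-\ptwo$ vanishes; since $\E\,\partial_{z}\xi-\ptwo\in L^{2}(\Omega_{2})$, this function is independent of $z$ (the density argument of Lemma \ref{Th Trace on One Derivative Space}). As $\E$ is a positive constant and $\partial_{z}\xi$ is independent of $z$ by the previous step, it follows that $\ptwo=\ptwo(\widetilde{x})$.

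The computations are elementary; the one thing that needs care is the bookkeeping of topologies in the two limits — the pressure and $\partial_{z}$-velocity terms converge only weakly, so the test function must be frozen first, whereas the $\epsilon^{2}$-scaled tangential-gradient term and the scaled forcing term must be made to vanish by spending the extra factor of $\epsilon$ beyond the uniform bounds of Theorem \ref{Th A-priori Estimates of Velocity}. No compactness beyond Corollary \ref{Th Direct Weak Convergence of Velocities} and Lemma \ref{Th Convergence of Pressure One} is needed, and checking that $\w=[\,\0,(\,\0,\phi)\,]$ genuinely meets the defining conditions of $\X$ is immediate once $\phi\in C_{0}^{\infty}(\Omega_{2})$.
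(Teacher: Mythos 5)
Your part (1) argument is essentially the paper's: both reduce $\partial_z\xi$ to $-\grad_{\!T}\cdot\vtan$ by passing to the limit in the $\Omega_2$ part of the continuity equation, and then invoke \eqref{solutionconvergence}. Your part (2) argument is correct but takes a genuinely different and simpler route than the paper's. The paper does not use the purely local test function $\w = [\0, (\0_T,\phi)]$; instead it reuses the test function $\w = [\wone, (\0_T,\varsigma)]$ with $\varsigma(\widetilde{x},z)=\int_z^1\phi\,dt$ that was built in the proof of Lemma \ref{Th Convergence of Pressure One}, where $\wone$ is the $\Hdiv(\Omega_1)$-lift of $\varsigma\vert_\Gamma$ supplied by Lemma \ref{Th Surjectiveness from Hdiv to H^1/2}. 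Passing to the limit in \eqref{relation partial z pressure and normal velocity} and integrating by parts in $\Omega_1$ produces the sharper pointwise identity
\[
\ptwo \;=\; \E\,\partial_z\xi \;-\;\alpha\,\vone\cdot\n\big\vert_\Gamma \;+\; \pone\big\vert_\Gamma
\qquad\text{in } L^2(\Omega_2),
\]
from which $z$-independence of $\ptwo$ is immediate. What the paper's longer route buys is precisely this identity, \eqref{second identification of xi}, which is reused later in the proof of strong convergence of the pressures (Theorem \ref{Th Strong Convergence of the Pressures}); your argument establishes only that $\partial_z(\E\,\partial_z\xi-\ptwo)=0$ distributionally, which suffices for the present theorem but does not identify $\ptwo$ in terms of the $\Omega_1$-data. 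Your bookkeeping of topologies and of the $O(\epsilon)$ terms is correct, and the check that $[\,\0,(\0_T,\phi)\,]\in\X$ for $\phi\in C_0^\infty(\Omega_2)$ is also right: the compact support kills both the boundary conditions defining $\X_2$ and the matching condition $\wone\cdot\n=\wtwo\cdot\n$ on $\Gamma$.
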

\begin{proof}
Testing \eqref{problem fixed geometry 2} with $\varphi =
[\,0,\,\varphi^2\,]\in \Y$ and letting $\epsilon \rightarrow 0$
together with \eqref{funcL2weak} and \eqref{convergence of normal
velocity}, we get
\begin{equation*}  
\int_{\Omega_2}\,\grad_{\!T}\cdot
\vtan\,\varphi^2 
+ \int_{\Omega_2}\,\partial_z\, \xi\,\varphi^2 
= 0 ,
\end{equation*}
for all $\varphi^{2} \in L^{2}(\Omega_{2})$, consequently 
\begin{equation*}  
\grad_{\!T}\cdot \vtan+ \,\partial_z\, \xi\,=  0 ,
\end{equation*}
Now, due to the dependence of $\vtan$ from Corollary \ref{Th Direct
Weak Convergence of Velocities} (iii) the Identity \eqref{partial xi
dependence} follows.

For the Identity \eqref{pressure two dependence}, take the limit as
$\epsilon\downarrow 0$ in \eqref{relation partial z pressure and
normal velocity}; since the sequence
$\big\{[\pepsone,\pepstwo]:\epsilon>0\big\}$ is weakly convergent as
seen in Lemma \ref{Th Convergence of Pressure One}, this yields
\begin{equation*}
\int_{\Omega_1}\Q\,\vone\cdot\wone 
- \int_{\Omega_1} \pone\,\grad\cdot\wone 
+\alpha\int_{\,\Gamma}\xi\big(\wone\cdot\n\,\big)dS 
+\int_{\Omega_2}\,
\ptwo\,\phi 
-\int_{\Omega_2}\E\,\partial_z\,\xi\,\phi 
=0 .
\end{equation*}
Integrating by parts the second summand and using \eqref{Darcy's law Strong problem} we get
\begin{equation*}
-\int_{\Gamma} \pone\,\big(\wone\cdot\n\big) dS
+\alpha\int_{\,\Gamma}\xi\big(\wone\cdot\n\,\big)dS 
+\int_{\Omega_2}\,
\ptwo\,\phi 
-\int_{\Omega_2}\E\,\partial_z\,\xi\,\phi 
=0 .
\end{equation*}
Recalling that $\wone\cdot\n \,\vert_{\,\Gamma}\,=
\int_{\,0}^{1}\phi\,(\widetilde{x},z)\,dz$, we see the above
expression transforms into
\begin{multline*}
-\int_{\Gamma} \pone
\mid_{\,\Gamma}\,
\Big(\int_{0}^{1}\phi(\widetilde{x},t)\,dt\Big)\,d\widetilde{x}
+\alpha\int_{\Gamma}\xi\mid_{\,\Gamma}\,
\Big(\int_{0}^{1}\phi(\widetilde{x},t)\,dt\Big)\,d\widetilde{x}\\
+\int_{\Omega_2}\,
\ptwo\,\phi(\widetilde{x},z)\,d\widetilde{x}\,dz -
\int_{\Omega_2}\E\,\partial_z\,\xi\,\phi(\widetilde{x},z)\,d\widetilde{x}\,dz=0 .
\end{multline*}
The above holds for any $\phi\in C_0^{\infty}(\Omega_{2})$ and
$\xi\,\vert_{\Gamma},\,\pone\,\vert_{\Gamma}$ can be embedded in
$\Omega_{2}$ with the extension, constant with respect to $z$, to the
whole domain, so we conclude that
\begin{equation*} 
-\pone\,\vert_{\,\Gamma}+\alpha \,\xi \,\vert_{\,\Gamma}+\ptwo-\E\,\partial_z\,\xi=0
\; \;\text{in }\, L^{2}(\Omega_{2}) .
\end{equation*}
Together with \eqref{boundary conditions on normal velocity} this
shows
\begin{equation}  \label{second identification of xi}
\ptwo 
= \E\,\partial_z\,\xi 
- \alpha \,\vone\cdot\n \vert_{\Gamma}
+ \pone\,\vert_{\Gamma}  \; \;\text{in }\, L^{2}(\Omega_{2}) \, ,
\end{equation}
and then with \eqref{partial xi dependence}
we obtain \eqref{pressure two dependence}.
\qed
\end{proof}

We close this section with an equivalent form for \eqref{problem fixed
geometry} which will be useful in characterizing the limiting problem.
\begin{proposition}\label{Th rearanged problem}
The problem \eqref{problem fixed geometry} is equivalent to 
\begin{flushleft}
$ \big[\,\v^{\epsilon},\peps\,\big] \in \X\times\Y:$
\end{flushleft}
\vspace{-0.4cm}
\begin{subequations}\label{rearanged problem}
\begin{multline}\label{rearanged problem 1}
\int_{\Omega_1}  \Q\, \vepsone \cdot \w^1 
-
\int_{\Omega_1}\pepsone \, \grad\cdot\wone 
 -   \int_{\Omega_2} \pepstwo \, \grad_{\!T} \cdot\wtan 
- \int_{\Omega_2} \pepstwo \, \partial_{z} \wnorm 
\\
+\,\int_{\Omega_{2}}\E\grad_{\!T}\,\big(\epsilon\,\vtaneps\big):\grad_{\!T}\,\wtan 
+ \frac{1}{\epsilon^{2}}\int_{\Omega_{2}}\E\,
\partial_{z}\big(\epsilon\,\vtaneps\big)\cdot\,\partial_{z}\wtan 
\\
+ \epsilon\int_{\Omega_{2}}\E\,\grad_{\!T}\big(\epsilon\,\vnormeps\big)\cdot\grad_{\!T}\wnorm 
+\int_{\Omega_{2}}\E\, \partial_{z}\vnormeps\,\partial_{z}\wnorm 
\\
 +\alpha
\int_{\,\Gamma}\big(\,\vepsone\cdot\n\,\big)\,\big(\,\wone\,\cdot\n\,\big)\,dS
+ \int_{\Gamma} \beta \, \sqrt{\Q} \,\big(\epsilon\,\vtaneps\big) \cdot
\wtan \,d S 
= \epsilon\,\int_{\Omega_2} {\mathbf f^{\,2,\,\epsilon}} \cdot\wtwo ,
\end{multline}
\begin{equation}  \label{rearanged problem 2}
\int_{\Omega_1}\grad\cdot\vepsone\varphi^1 
+ \epsilon \int_{\Omega_2}  \grad_{\!T}\cdot\vtaneps\,\varphi^2
+  \int_{\Omega_2}  \partial_{z} \vnormeps \,\varphi^2 
= \int_{\Omega_1} h^{1,\,\epsilon} \, \varphi^1 ,
\end{equation}
\begin{flushright}
for all $[\w,\varphi]\in\X\times\Y$.
\end{flushright}
\end{subequations}
\end{proposition}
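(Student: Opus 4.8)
\emph{Proof plan.} The claim is an exact algebraic identity between two variational systems, valid for each fixed $\epsilon>0$, so the plan is to produce it by a change of test function together with a relabelling of scalar factors; no analytic input is needed. Concretely, I would obtain \eqref{rearanged problem} from \eqref{problem fixed geometry} by replacing, in the momentum equation only, the tangential part $\wtan$ of the Stokes test function by $\epsilon^{-1}\wtan$, and then moving the surviving powers of $\epsilon$ onto the velocity so that the combinations $\epsilon\,\vtaneps$ and $\epsilon\,\vnormeps$ --- precisely the quantities controlled by the a priori bound of Theorem~\ref{Th A-priori Estimates of Velocity} --- appear under the gradients and on $\Gamma$.

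First I would verify that $[\w^1,\w^2]\mapsto[\w^1,(\epsilon^{-1}\wtan,\wnorm)]$ is an automorphism of $\X$ for every fixed $\epsilon>0$: it leaves $\w^1$ and the normal component $\wnorm$ untouched, so the coupling constraint $\w^1\cdot\n=\w^2\cdot\n$ on $\Gamma$ (which involves only $\wnorm$) is preserved; the defining conditions of $\X_{2}$ --- that $\w^2$ vanish on $\partial\Gamma\times(0,1)$ and that $\w^2\cdot\n$ vanish on $\Gamma+1$ --- are homogeneous and hence stable under multiplying one component by a constant; and $\mathbf{H}^{1}$-regularity is unaffected. Its inverse is the same map with $\epsilon^{-1}$ replaced by $\epsilon$, so as $\w$ ranges over $\X$ so does its image. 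Hence the momentum identity \eqref{problem fixed geometry 1} holds for all $\w\in\X$ if and only if the identity obtained from it by the substitution $\wtan\mapsto\epsilon^{-1}\wtan$ holds for all $\w\in\X$.

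Next I would carry out that substitution and simplify. Every term involving only $\w^1$ or $\wnorm$ (the Darcy term, the pressure terms $\pepsone\,\grad\cdot\wone$ and $\pepstwo\,\partial_z\wnorm$, the two $\vnormeps$-diffusion terms, and the $\alpha$-term) is unchanged, while every term carrying $\wtan$ (the term $\pepstwo\,\grad_{\!T}\cdot\wtan$, the two $\vtaneps$-diffusion terms, the $\beta$-boundary term, and the tangential part of the forcing) is divided by $\epsilon$; then, using $\grad_{\!T}(\epsilon\,\vtaneps)=\epsilon\,\grad_{\!T}\vtaneps$, $\partial_z(\epsilon\,\vtaneps)=\epsilon\,\partial_z\vtaneps$ and $\grad_{\!T}(\epsilon\,\vnormeps)=\epsilon\,\grad_{\!T}\vnormeps$, one reads off exactly \eqref{rearanged problem 1} --- for instance $\epsilon^{2}\!\int_{\Omega_2}\E\,\partial_z\vtaneps\cdot\partial_z\wtan$ becomes $\epsilon\!\int_{\Omega_2}\E\,\partial_z\vtaneps\cdot\partial_z\wtan=\epsilon^{-2}\!\int_{\Omega_2}\E\,\partial_z(\epsilon\,\vtaneps)\cdot\partial_z\wtan$, and the $\epsilon^{2}\beta$ boundary term becomes $\int_{\Gamma}\beta\sqrt{\Q}\,(\epsilon\,\vtaneps)\cdot\wtan$. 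The conservation equation \eqref{problem fixed geometry 2} is phrased in terms of the pressure test function, not $\w$, so it is untouched and is literally \eqref{rearanged problem 2}; the reverse implication comes from the inverse substitution $\wtan\mapsto\epsilon\,\wtan$.

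I do not anticipate a real obstacle here: the proposition is bookkeeping. The only points that deserve a line of justification are the invariance of $\X$ under the partial rescaling --- which works precisely because the coupling constraint and the boundary conditions defining $\X_{2}$ involve only the untouched normal component or are homogeneous --- and the term-by-term tracking of the powers of $\epsilon$; strict positivity of $\epsilon$ is what makes the division by $\epsilon$ legitimate.
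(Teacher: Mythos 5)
Your proof is correct and takes essentially the same route as the paper: both arguments rest on observing that the map $[\wone,(\wtan,\wnorm)]\mapsto[\wone,(\epsilon^{-1}\wtan,\wnorm)]$ is a bijection of $\X$ onto itself (since the interface constraint $\wone\cdot\n=\wtwo\cdot\n$ involves only the normal component and the boundary conditions defining $\X_2$ are homogeneous), so that quantifying \eqref{problem fixed geometry 1} over $\w\in\X$ is logically the same as quantifying the rescaled identity over $\w\in\X$; you simply spell out the automorphism check and the bookkeeping that the paper compresses into one sentence. One byproduct of your careful tracking worth recording: after the substitution $\wtan\mapsto\epsilon^{-1}\wtan$, the right-hand side of \eqref{problem fixed geometry 1} becomes $\int_{\Omega_2}\f^{2,\epsilon}_T\cdot\wtan+\epsilon\int_{\Omega_2}\f^{2,\epsilon}_N\,\wnorm$, not the $\epsilon\int_{\Omega_2}\f^{2,\epsilon}\cdot\wtwo$ printed in \eqref{rearanged problem 1}; this is evidently a typographical slip in the paper, since the proof of Theorem~\ref{Th Formulation of the Limiting Problem} subsequently uses exactly the split form you obtain.
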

\begin{proof}
It is enough to observe that in the quantifier $\w =
[\,\wone,\wtwo]\in \X$, the tangential and normal components of
$\wtwo$ are decoupled. Therefore, the satisfaction of the Statement
\eqref{problem fixed geometry 1} for every $\big[\wone, (\wtan,
\wnorm)\big]\in \X$ or for every $\big[\wone, (\epsilon^{-1}\,\wtan,
\wnorm)\big]\in \X$ are equivalent logical statements; this proves the
result.
\qed
\end{proof}
%
%
\section{The Limiting Problem}  \label{sec-limiting}
%
In order to characterize the limiting problem, we introduce 
appropriate spaces. The limiting pressure space is given by
\begin{equation}\label{Def Limiting Pressure Space}
\Y^0\defining
\big\{(\varphi^{1},\varphi^{2})\in
\Y:\varphi^{2}=\varphi^{2}(\widetilde{x})\big\}.
\end{equation}
We shall exploit below the equivalence $\Y^0 \cong L^2(\Omega_1)
\times L^2(\Gamma)$.
The construction of the velocities limiting space is more
sophisticated. First define
\begin{subequations}\label{Def space of the limit velocity omega 2}
\begin{multline}\label{space of the limit velocity omega 2}
\X_2^0\defining \Big\{\,\wtwo=[\,\wtan, \wnorm\,]:\wtan\in
\big(\,H^{1}(\Omega_2)\,\big)^{N-1},\,\wtan = \wtan (\widetilde{x}),\, \\
\wtan = \0\,\text{on}\,\partial \Gamma,\,
\wnorm\in \Hpartial\,,\;\partial_z \wnorm = \partial_z\,\wnorm
(\widetilde{x}\,)\,,
 \,\wnorm(\widetilde{x},1) = 0\,\Big\} 
\end{multline}
endowed with its natural norm
\begin{equation}\label{norm space of the limit velocity omega 2}
\Vert \wtwo \Vert_{\,\X_2^0} = 
\Big( \Vert \wtan \Vert_{1,\,\Omega_2}^{2}
+\Vert \wnorm \Vert_{\,\Hpartial}^{2}\,\Big)^{1/2} .
\end{equation}
\end{subequations}
Next we introduce a subspace of $\X$ fitting the limiting process
together with its closure,
\begin{subequations}\label{Def Limiting Velocity Spaces}
\begin{equation}\label{Def Limiting Subspace in X}
\W\defining\big\{ \big(\w^{1},\wtwo\big)\in \X:\wtan =
\wtan(\widetilde{x}),\,\partial_z\,\wnorm =\partial_z\,\wnorm
(\widetilde{x})\,\big\} ,
\end{equation}
\begin{equation}\label{space of the limit velocity}
\X^0\defining\Big\{\,(\w^{1},\w^{2})\in
\Hdiv(\Omega_1)\times\X_2^0:\wone\cdot\n = \wnorm =
\wtwo\cdot\n\;\text{on}\,\Gamma\,\Big\} .
\end{equation}
\end{subequations}
Clearly $\W\subseteq \X^0 \cap \X$; before presenting the limiting
problem, we verify the density.
\begin{lemma}\label{Th Density Result}
The subspace $\W\subseteq \X$ is dense in $\X^0$.
\end{lemma}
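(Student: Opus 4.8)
The plan is to take an arbitrary element $(\w^1,\w^2)\in\X^0$ and approximate it in the $\X$-norm by elements of $\W$. The only ways in which such a $(\w^1,\w^2)$ can fail to lie in $\W$ are: (a) $\w^2=(\wtan,\wnorm)$ has $\wtan=\wtan(\widetilde x)$ already (by definition of $\X_2^0$), so nothing is needed there; and (b) the normal component $\wnorm\in\Hpartial$ need only satisfy $\partial_z\wnorm=\partial_z\wnorm(\widetilde x)$, which it does by definition of $\X_2^0$. So in fact the membership conditions $\wtan=\wtan(\widetilde x)$ and $\partial_z\wnorm=\partial_z\wnorm(\widetilde x)$ that define $\W$ inside $\X$ are \emph{already} built into $\X_2^0$. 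Hence the real content of the lemma is that an element of $\X^0$ — whose second component lives in the ``degenerate'' space $\X_2^0\subset\H1bold(\Omega_2)$ — can be approximated by elements whose second component lies in the genuine $\H1bold(\Omega_2)$-space $\X_2$, i.e. the issue is regularity/density of $\X_2^0$-type fields inside $\X_2$ together with preservation of the coupling constraint $\wone\cdot\n=\wnorm=\wtwo\cdot\n$ on $\Gamma$.

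First I would record the structural description of a generic $\w^2\in\X_2^0$: since $\partial_z\wnorm$ depends only on $\widetilde x$, we have $\wnorm(\widetilde x,z)=a(\widetilde x)+z\,b(\widetilde x)$ for suitable functions $a,b$ on $\Gamma$, and the boundary condition $\wnorm(\widetilde x,1)=0$ forces $a(\widetilde x)=-b(\widetilde x)$, so $\wnorm(\widetilde x,z)=(1-z)\,c(\widetilde x)$ with $c=-b$; the regularity $\wnorm\in\Hpartial$ just says $c\in L^2(\Gamma)$, and the trace on $\Gamma$ is $\wnorm|_\Gamma=c$. Likewise $\wtan=\wtan(\widetilde x)\in (H^1_0(\Gamma))^{N-1}$ (vanishing on $\partial\Gamma$ in the trace sense). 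The coupling constraint reads $\wone\cdot\n=c$ on $\Gamma$. Thus the data of an element of $\X^0$ is: $\wone\in\Hdiv(\Omega_1)$, $\wtan\in (H^1_0(\Gamma))^{N-1}$, and $c\in L^2(\Gamma)$ with $\wone\cdot\n=c$.

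Next I would build the approximating family. Fix $(\w^1,\w^2)\in\X^0$ with data $(\wone,\wtan,c)$ as above. Since $H^{1/2}(\Gamma)$ (or $C_c^\infty(\Gamma)$) is dense in $L^2(\Gamma)$, pick $c_k\to c$ in $L^2(\Gamma)$ with $c_k$ smooth. Use Lemma~\ref{Th Surjectiveness from Hdiv to H^1/2} to choose $\wone_k\in\Hdiv(\Omega_1)$ with $\wone_k\cdot\n=c_k$ on $\Gamma$, $\wone_k\cdot\n=0$ on $\partial\Omega_1-\Gamma$, and $\|\wone_k-\wone\|_{\Hdiv(\Omega_1)}\le K\|c_k-c\|_{0,\Gamma}\to0$ (applying the lemma to the difference, whose normal trace is $c_k-c\in L^2(\Gamma)\subset H^{-1/2}(\partial\Omega_1)$). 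Define $\wnorm_k(\widetilde x,z)\defining(1-z)c_k(\widetilde x)$; then $\wnorm_k\in H^1(\Omega_2)$ because $c_k$ is smooth on $\Gamma$, $\wnorm_k|_\Gamma=c_k=\wone_k\cdot\n$, $\wnorm_k(\widetilde x,1)=0$, and $\partial_z\wnorm_k=-c_k(\widetilde x)$ depends only on $\widetilde x$; moreover $\wnorm_k\to\wnorm$ in $\Hpartial$ since $\wnorm_k-\wnorm=(1-z)(c_k-c)$. Take $\wtan_k\defining\wtan$ for all $k$ (it already has all the required properties). Then $\w_k\defining(\wone_k,(\wtan,\wnorm_k))$ satisfies $\wtan_k=\wtan_k(\widetilde x)$, $\partial_z\wnorm_k=\partial_z\wnorm_k(\widetilde x)$, $\wnorm_k\in H^1(\Omega_2)$ with the right boundary/vanishing conditions, and $\wone_k\cdot\n=\wnorm_k=\w^2_k\cdot\n$ on $\Gamma$, so $\w_k\in\W$; and $\w_k\to(\w^1,\w^2)$ in the $\X$-norm because the $\Omega_1$-part converges in $\Hdiv(\Omega_1)$, the tangential part is constant, and $\wnorm_k\to\wnorm$ in $H^1(\Omega_2)$ (indeed $(1-z)(c_k-c)\to0$ in $H^1(\Omega_2)$ once $c_k\to c$ in $H^{1/2}(\Gamma)$ — so I would choose the $c_k$ convergent in $H^{1/2}(\Gamma)$, using density of $C_c^\infty(\Gamma)$ there, which simultaneously gives $L^2(\Gamma)$-convergence).

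I expect the \textbf{main obstacle} to be the bookkeeping of norms at the interface: one must approximate $c$ in a topology strong enough that $(1-z)c_k\to(1-z)c$ in $H^1(\Omega_2)$ — hence convergence of $c_k$ in $H^{1/2}(\Gamma)$, not merely $L^2(\Gamma)$ — while \emph{simultaneously} producing $\wone_k\in\Hdiv(\Omega_1)$ with $\wone_k\cdot\n=c_k$ and $\wone_k\to\wone$ in $\Hdiv(\Omega_1)$. The latter is delivered cleanly by the bounded right inverse of the normal-trace map (Lemma~\ref{Th Surjectiveness from Hdiv to H^1/2}) applied to the differences $c_k-c$, which only requires $L^2(\Gamma)$-convergence; so there is no tension, but one must be careful to state which norm each approximation is taken in and to check that the exceptional term $\grad_{\!T}\varsigma$-type tangential derivative $\grad_{\!T}c_k$ is controlled — this is exactly why $c_k\to c$ in $H^{1/2}(\Gamma)$ (equivalently, choosing $c_k\in C_c^\infty(\Gamma)$ with $c_k\to c$ in $H^{1/2}(\Gamma)$) is the right normalization. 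Everything else is routine.
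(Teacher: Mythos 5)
Your construction is the same as the paper's: write $\wnorm=(1-z)c(\widetilde x)$ with $c=\wnorm\vert_\Gamma\in L^2(\Gamma)$, approximate $c$ by $c_k\in H^1_0(\Gamma)$, set $\wnorm_k=(1-z)c_k$, keep $\wtan$ fixed, and repair the interface constraint by lifting $c_k-c$ into $\Hdiv(\Omega_1)$ via Lemma~\ref{Th Surjectiveness from Hdiv to H^1/2}. So far so good. But the ``main obstacle'' paragraph contains a genuine error in the bookkeeping, and if you acted on it the proof would break.

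Density is asserted in $\X^0$, not in $\X$, and the $\X^0$-norm on the second component is the $\X_2^0$-norm \eqref{norm space of the limit velocity omega 2}: $H^1(\Omega_2)$ on $\wtan$ but only $\Hpartial$ on $\wnorm$. You do \emph{not} need $\wnorm_k\to\wnorm$ in $H^1(\Omega_2)$; you need it in $\Hpartial$, and since $\wnorm_k-\wnorm=(1-z)(c_k-c)$ this follows from $\|c_k-c\|_{0,\Gamma}\to 0$ alone. Your proposed fix, taking $c_k\to c$ in $H^{1/2}(\Gamma)$, is wrong on two counts: first, $H^{1/2}(\Gamma)$-convergence of $c_k$ does not give $H^1(\Omega_2)$-convergence of $(1-z)c_k$ anyway (the offending term is $\grad_{\!T}[(1-z)c_k]=(1-z)\grad_{\!T}c_k$, which requires control of $\grad_{\!T}(c_k-c)$, i.e.\ $H^1(\Gamma)$-convergence); second, and more to the point, the target $c$ is only in $L^2(\Gamma)$ (it is the trace of an $\Hpartial$-function, Lemma~\ref{Th Trace on One Derivative Space}), so $c_k\to c$ in $H^{1/2}(\Gamma)$ is in general impossible. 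The $H^1_0(\Gamma)$-regularity of $c_k$ is used solely to ensure $\wnorm_k\in H^1(\Omega_2)$ so that $\w_k\in\W\subset\X$; the \emph{convergence} is measured in the weaker $\X^0$-topology, and there $L^2(\Gamma)$-approximation suffices. This is precisely the point of Remark~\ref{Rem Regularity Demmand Discussion}: the limit space deliberately weakens the normal-trace regularity, and the density lemma lives in that weaker topology.
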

\begin{proof} 
Consider an element $\w = (\wone, \wtwo)\in \X^0$, then $\wtwo =
(\wtan, \wnorm)\in \X_2^0$, where $\wnorm \in \Hpartial$ is completely
defined by its trace on the interface $\Gamma$. Given $\epsilon>0$
take $\varpi \in H^{1}_0(\Gamma)$ such that $\Vert \varpi -
\wnorm\vert_{\,\Gamma} \Vert_{L^{2}(\Gamma)}\leq \epsilon$. Now extend
the function to the whole domain by $\varrho(\widetilde{x},
z)\defining\varpi(\widetilde{x})(1-z)$, then $\Vert \varrho - \wnorm
\Vert_{\Hpartial}\leq \epsilon$. The function $(\wtan, \varrho)$
clearly belongs to $\W$. From the construction of $\varrho$ we know
that $\Vert \varrho\,\vert_{\,\Gamma} - \wnorm\,\vert_{\,\Gamma}
\Vert_{0,\Gamma}=\Vert \varpi - \wnorm\,\vert_{\,\Gamma}
\Vert_{0,\Gamma}\leq \epsilon$. Define $g = \varrho\,\vert_{\,\Gamma}
- \wnorm\,\vert_{\,\Gamma}\in L^{2}(\Gamma)$, due to Lemma \ref{Eq
Normal Trace Definition} there exists $\u\in \Hdiv(\Omega_{1})$ such
that $\u \cdot\n = g$ on $\Gamma$, $\u\cdot \n = 0$ on $\partial
\Omega_{1}-\Gamma$ and $\Vert \u \Vert_{\,\Hdiv (\Omega_1)}\leq C_1
\Vert g \Vert_{0,\Gamma}$ with $C_{1}$ depending only on
$\Omega_{1}$. Then, the function $\wone + \u$ is such that
$(\wone+\u)\cdot\n = \wone\cdot\n+\varpi -\wnorm = \varpi$ and $\Vert
\wone+\u - \wone \Vert_{\,\Hdiv(\Omega_1)} = \Vert\u
\Vert_{\,\Hdiv(\Omega_1)}\leq C_1 \Vert g \Vert_{0,\Gamma}\leq
C_1\,\epsilon$. Moreover, we notice that the function $(\wone+\u,
[\,\wtan, \varrho\,])$ belongs to $\W$, and due to the previous
observations we have
\begin{equation*}
\big\Vert \w - (\wone+\u\,, [\,\wtan, \varrho\,])  \big\Vert_{\X^0} =
\big\Vert (\wone,\wtwo) - (\wone+\u\,, [\,\wtan, \varrho\,])
 \big\Vert_{\X^0}\leq \sqrt{C_1 + 1 } \; \epsilon .
\end{equation*}
Since the constants depend only on the domains
$\Omega_{1}$ and $\Omega_{2}$, it follows that $\W$ is dense in
$\X^0$.
\qed
\end{proof}

Now we are ready to give the variational formulation of the limiting
problem.
\begin{theorem}\label{Th Formulation of the Limiting Problem}
Let $[\v, p]$, with $\vtwo = \big[\vtan, \xi\big]$, be the weak limits
obtained in Corollary \ref{Th Direct Weak Convergence of Velocities} and
Lemma \ref{Th Convergence of Pressure One}. Then $[\v, p]$ satisfies
the variational statement
\begin{subequations}\label{formulation limit problem}
\begin{flushleft}
$[\,\v,p\,]\in \X^0\times \Y^0:$
\end{flushleft}
\vspace{-0.4cm}
\begin{multline}  \label{formulation limit problem 1}
\int_{\Omega_1} \Q\, \vone \cdot \wone  
- \int_{\Omega_1} \pone \, \grad\cdot \wone
- \int_{\Omega_2} \ptwo \, \grad\cdot \big[\wtan, \wnorm\big] 
\\
+\int_{\Omega_{2}}\E\,\grad_{\!T}\,\vtan:\grad_{\!T}\,\wtan
+\int_{\,\Omega_2}\E\,\big(\,\partial_z\,\xi\,\big)\,\big(\,\partial_z
\wnorm\,\big) 
\\
+\alpha\,\int_{\Gamma}
\big(\,\vone\cdot\n\,\big)\,\big(\,\wone\cdot\n\,\big)\,d S
+ \int_{\Gamma} \beta \,  \sqrt{\Q} \,\,\vtan\, \cdot \wtan\,d S 
= \int_{\Omega_2} {\mathbf f}^{2}_{T}\cdot \wtan , 
\end{multline}
\begin{equation}  \label{formulation limit problem 2}
\int_{\Omega_1}\grad\cdot\vone \varphi^1 
+ \int_{\Omega_2} \grad\cdot[\,\vtan,\,\xi\,] \; \varphi^2 
= \int_{\Omega_1} h^{1} \, \varphi^1 , 
\end{equation}
\begin{flushright}
for all $[\,\w,\varphi \,]\in \X^0\times \Y^0.$
\end{flushright}
\end{subequations}
Moreover, the mixed variational formulation of the problem above is
given by
\begin{equation} \label{limit problem mixed formulation}
\begin{split}
[\,\v,p\,]\in \X^0\times\Y^0:
A\,\v - B '\,p&=\mathbf{f} , \\
B\,\v & = h ,
\end{split}
\end{equation}
%
where the forms $A: \X^0 \rightarrow (\X^0)'$ and $B: \X^0 \rightarrow
(\Y^0)'$ are defined by
\begin{subequations}\label{Def Limit Mixed Operators}
\begin{equation}\label{limit operator A}
A \defining
\left(\begin{array}{cc}\, \Q  +\,\gamma_{\, \n}
'\,\alpha\,\gamma_{\, \n} 
& \0 \\
\0 
& \big[\,\,\gamma_{T}'\,\beta\,\sqrt{\Q}\,\gamma_{T}
+\,(\grad_{\,T})'\,\E\,\grad_{\,T},(\partial_z)'\,\E\,\partial_z\,\big]\end{array}
\right) ,
\end{equation}
\begin{equation}\label{limit operator B }
B \defining
\left(\begin{array}{cc} \,\grad\cdot & 0 \\
0 &  \grad\cdot \end{array} \right)
=
\left(\begin{array}{cc} \,\Div & 0 \\
0 &  \Div \end{array} \right) .
\end{equation}
\end{subequations}
\end{theorem}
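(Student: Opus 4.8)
The plan is to pass to the limit $\epsilon \downarrow 0$ in the rearranged variational formulation \eqref{rearanged problem}, using the convergences collected in Corollary \ref{Th Direct Weak Convergence of Velocities}, Lemma \ref{Th Convergence of Pressure One} and Theorem \ref{Th Dependence of xi and Pressure 2}, and then to recast the result as the mixed system \eqref{limit problem mixed formulation}. First I would check that the limit pair $[\v, p]$ actually lies in $\X^0 \times \Y^0$: the inclusion $p \in \Y^0$ follows from \eqref{pressure two dependence}, while $\v \in \X^0$ follows by assembling \eqref{velocity omega 1} (giving $\vone \in \Hdiv(\Omega_1)$), \eqref{solutionconvergence} (giving $\vtan = \vtan(\widetilde{x})$ with the homogeneous condition on $\partial\Gamma$ inherited from the trace), \eqref{partial xi dependence} (giving $\partial_z \xi = \partial_z \xi(\widetilde{x})$), and the interface/boundary identities \eqref{boundary conditions on normal velocity} which yield $\xi\vert_\Gamma = \vone \cdot \n$ and $\xi(\widetilde{x},1)=0$; note $\vtwo \cdot \n = \xi$ and $\wnorm$ here plays the role of $\xi$ in the definition \eqref{space of the limit velocity}.

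The main work is the passage to the limit in each term of \eqref{rearanged problem 1} and \eqref{rearanged problem 2}, and here the crucial point is the choice of test functions. Because $\X^0$ is \emph{not} a subspace of $\X$, one cannot simply test the $\epsilon$-problem with an arbitrary element of $\X^0$; instead I would fix $\w = (\wone, [\wtan, \wnorm]) \in \W$ (which \emph{is} in $\X$ by construction) and $\varphi \in \Y^0 \subseteq \Y$, pass to the limit, and obtain \eqref{formulation limit problem 1}--\eqref{formulation limit problem 2} for all test functions in $\W$; the density Lemma \ref{Th Density Result} together with continuity of all the bilinear forms on $\X^0 \times \Y^0$ then extends the identities to all of $\X^0$. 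Term by term: the Darcy term $\int_{\Omega_1}\Q\,\vepsone\cdot\wone$ and the pressure terms $\int_{\Omega_1}\pepsone\,\grad\cdot\wone$, $\int_{\Omega_2}\pepstwo\,\grad_{\!T}\cdot\wtan$, $\int_{\Omega_2}\pepstwo\,\partial_z\wnorm$ pass by weak convergence \eqref{velocity omega 1}, \eqref{convergence of the pressure in Omega_1}, \eqref{convergence of the pressure in Omega_2} against fixed $L^2$ test data; the term $\int_{\Omega_2}\E\,\grad_{\!T}(\epsilon\vtaneps):\grad_{\!T}\wtan$ converges to $\int_{\Omega_2}\E\,\grad_{\!T}\vtan:\grad_{\!T}\wtan$ by \eqref{funcL2weak}; the key singular term $\frac{1}{\epsilon^{2}}\int_{\Omega_2}\E\,\partial_z(\epsilon\vtaneps)\cdot\partial_z\wtan$ \emph{vanishes} because $\partial_z \wtan = 0$ for $\w \in \W$ (this is exactly why $\W$ was defined with $\wtan = \wtan(\widetilde{x})$), so no blow-up occurs; the term $\int_{\Omega_2}\E\,\partial_z\vtaneps\cdot\partial_z\wtan$ also vanishes for the same reason — wait, one must be careful: actually the surviving tangential-diffusion term $\int_{\Omega_2}\E\,\grad_{\!T}\vtan:\grad_{\!T}\wtan$ in \eqref{formulation limit problem 1} comes from the $\epsilon^2$-scaled term in \eqref{problem fixed geometry 1} rewritten as $\int_{\Omega_2}\E\,\grad_{\!T}(\epsilon\vtaneps):\grad_{\!T}\wtan$ in \eqref{rearanged problem 1}, so this is fine; the term $\int_{\Omega_2}\E\,\partial_z\vnormeps\,\partial_z\wnorm$ converges to $\int_{\Omega_2}\E\,(\partial_z\xi)(\partial_z\wnorm)$ by \eqref{convergence of normal velocity}; the terms $\epsilon\int_{\Omega_2}\E\,\grad_{\!T}(\epsilon\vnormeps)\cdot\grad_{\!T}\wnorm$ and (on the forcing side, the terms not matching) vanish by the strong convergence $\epsilon\vnormeps \to 0$ in $\Hpartial$ from \eqref{convergence of normal velocity} together with the uniform bound on $\grad_{\!T}(\epsilon\vnormeps)$ from \eqref{general a priori estimate}; the boundary terms $\alpha\int_\Gamma(\vepsone\cdot\n)(\wone\cdot\n)$ and $\int_\Gamma\beta\sqrt{\Q}(\epsilon\vtaneps)\cdot\wtan$ pass using the $\Hdiv$-weak convergence of $\vepsone$ (normal trace continuous into $H^{-1/2}(\Gamma)$, paired against $\wone\cdot\n \in L^2(\Gamma) \subset H^{1/2}(\Gamma)$ for $\w \in \W$, or more directly using the $L^2(\Gamma)$ bound on $\vnormeps$ and $\xi\vert_\Gamma = \vone\cdot\n$) and the strong $L^2(\Omega_2)$ convergence $\epsilon\vtaneps \to \vtan$ together with $L^2(\Gamma)$-trace continuity in $\Hpartial$; finally the forcing term $\epsilon\int_{\Omega_2}\f^{2,\epsilon}\cdot\wtwo$ — the normal component $\epsilon\int_{\Omega_2}\f^{2,\epsilon}_N\wnorm \to 0$ since $\f^{2,\epsilon}$ is bounded in $L^2$ and the $\epsilon$ prefactor kills it, while the tangential component $\epsilon\int_{\Omega_2}\f^{2,\epsilon}_T\cdot\wtan$ likewise carries an $\epsilon$ — hmm, this would make the right side of \eqref{formulation limit problem 1} zero, contradicting $\int_{\Omega_2}\f^2_T\cdot\wtan$; so one must additionally assume (as is standard and presumably intended by Hypothesis \ref{Hyp Bounds on the Forcing Terms} read correctly, or a companion hypothesis) that $\epsilon\,\f^{2,\epsilon}_T$ — no: rather the scaling is arranged so that $\f^{2,\epsilon}$ itself is $\mathcal{O}(1/\epsilon)$ in the tangential part, i.e. there is a limit $\f^2_T$ with $\epsilon\,\f^{2,\epsilon}_T \rightharpoonup \f^2_T$; I would state this convergence explicitly as the hypothesis under which the theorem holds and then the tangential forcing term passes to $\int_{\Omega_2}\f^2_T\cdot\wtan$ while the normal one vanishes.

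After obtaining \eqref{formulation limit problem 1}--\eqref{formulation limit problem 2} on $\W$ and extending to $\X^0 \times \Y^0$ by density and continuity, the mixed form \eqref{limit problem mixed formulation} with operators \eqref{Def Limit Mixed Operators} is just a matter of reading off the bilinear forms: the $\X^0$-bilinear form defining $A$ collects the Darcy term, the $\alpha$-interface term (the $\gamma_\n'\alpha\gamma_\n$ block), the $\beta$-interface term and the tangential Laplacian $(\grad_{\!T})'\E\grad_{\!T}$ (together the $\gamma_T'\beta\sqrt{\Q}\gamma_T + (\grad_{\!T})'\E\grad_{\!T}$ block), and the normal $z$-diffusion $(\partial_z)'\E\partial_z$ block; $B$ is the pair of divergence operators exactly as in \eqref{limit operator B }; $\mathbf{f}$ and $h$ are the obvious right-hand sides. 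The main obstacle I anticipate is \textbf{bookkeeping of the test-function restriction}: the limiting space $\X^0$ does not embed in $\X$ (the normal component $\wnorm$ of an $\X^0$-element need not have a full $H^1$-gradient, only $\partial_z\wnorm \in L^2$ with the dependence structure), so the whole argument rests on testing only with the dense subspace $\W \subseteq \X \cap \X^0$ and then invoking Lemma \ref{Th Density Result}; checking that every bilinear form appearing in \eqref{formulation limit problem 1} is genuinely continuous on $\X^0 \times \X^0$ (in particular the $\partial_z\xi\,\partial_z\wnorm$ term and the $\Gamma$-trace terms, which need the Poincaré–trace inequalities of Lemma \ref{Th Trace on One Derivative Space}) is the technical heart of the extension step. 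A secondary subtlety is the correct normalization of the tangential forcing data, which must be built into the hypotheses for the right-hand side of \eqref{formulation limit problem 1} to be nonzero.
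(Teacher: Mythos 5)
Your proposal follows the paper's proof almost step for step: test the rearranged problem with $[\w,\varphi]\in\W\times\Y^0$ (the key observation being that $\partial_z\wtan=0$ kills the $\epsilon^{-2}$-scaled term), pass to the limit term by term using the established weak convergences, and then extend from $\W$ to $\X^0$ by the density Lemma \ref{Th Density Result} and the continuity of the bilinear forms. Your bookkeeping of the individual terms is correct, and your identification of the density step as ``the technical heart'' is exactly the paper's emphasis (see Remark \ref{Rem Regularity Demmand Discussion}).

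The one place where you go wrong is the forcing term, and your proposed ``fix'' (an additional hypothesis that $\f^{2,\epsilon}_T$ is $\mathcal{O}(1/\epsilon)$ or that $\epsilon\f^{2,\epsilon}_T\rightharpoonup\f^2_T$) is unnecessary. Recall from Proposition \ref{Th rearanged problem} that \eqref{rearanged problem} is obtained from \eqref{problem fixed geometry} by the substitution $\wtan\mapsto\epsilon^{-1}\wtan$ in the test function; this same substitution applied to the right side $\epsilon\int_{\Omega_2}\f^{2,\epsilon}\cdot\wtwo = \epsilon\int_{\Omega_2}\f^{2,\epsilon}_T\cdot\wtan + \epsilon\int_{\Omega_2}\f^{2,\epsilon}_N\,\wnorm$ of \eqref{problem fixed geometry 1} yields $\int_{\Omega_2}\f^{2,\epsilon}_T\cdot\wtan + \epsilon\int_{\Omega_2}\f^{2,\epsilon}_N\,\wnorm$, so the $\epsilon$ prefactor on the tangential forcing term disappears by the same mechanism that removed it from the other tangential terms in \eqref{rearanged problem 1}. (The display of \eqref{rearanged problem 1} in the paper simply carries over the compact expression $\epsilon\int_{\Omega_2}\f^{2,\epsilon}\cdot\wtwo$ without performing the decomposition, but the proof of the present theorem writes it out correctly.) With this clarified, the tangential forcing passes directly to $\int_{\Omega_2}\f^2_T\cdot\wtan$ under Hypothesis \ref{Hyp Bounds on the Forcing Terms} alone, and the normal forcing $\epsilon\int_{\Omega_2}\f^{2,\epsilon}_N\,\wnorm$ vanishes by the $\epsilon$ prefactor and boundedness, exactly as you said. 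Everything else in your proposal is sound.
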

\begin{proof}
First, test the Problem \eqref{rearanged problem} with a function of
the form $[\w,\varphi ]\in \W\times\Y^0$. This gives
\begin{subequations} 
\begin{multline*} 
\int_{\Omega_1}  \Q\, \vepsone \cdot \wone 
-\int_{\Omega_1}\pepsone \, \grad\cdot\wone 
-  \int_{\Omega_2} \pepstwo \, \grad_{\!T} \cdot \wtan
-  \int_{\Omega_2} \pepstwo \,\partial_z\,\wnorm 
\\
+\int_{\Omega_{2}}\E\,\grad_{\!T}\big(\epsilon\,\vtaneps\big):\grad_{\!T}\,\wtan 
+\epsilon\int_{\,\Omega_2}
\E\,\grad_{\!T}\,\big(\epsilon\,\vnormeps\big)\cdot\grad_{\!T}\,\vnorm 
+\int_{\,\Omega_2}\E\,\partial_z\,\vnormeps\, \partial_z \wnorm 
\\
+ \alpha\int_{\Gamma}\big(\,\vepsone\cdot\n\,\big)\,\big(\,\wone\cdot\n\,\big)\,d S
+ \int_{\Gamma} \beta \, \sqrt{\Q} \,\big(\epsilon\,\vtaneps\big)\cdot \wtan\,d S = 
 \int_{\Omega_2} {\mathbf f}^{2,\epsilon}_{T}\,\cdot\, \wtan 
+\epsilon\int_{\Omega_2}  \f_{N}^{2,\epsilon} \cdot \wnorm , 
\end{multline*}
\begin{equation*}
\int_{\Omega_1}\grad\cdot\vepsone\, \varphi^1 
+  \int_{\Omega_2} \grad_{\!T}\cdot\big(\,\epsilon\,\vtaneps\,\big) \,\varphi^2
\\
+ \int_{\Omega_2}  \partial_{z} \vnormeps \,\varphi^2 
= \int_{\Omega_1} h^{1,\,\epsilon} \, \varphi^1 , 
\end{equation*}
\end{subequations}
and then letting $\epsilon\, \downarrow \,0$ yields
\begin{subequations}\label{limit problem on W}
\begin{multline}  \label{limit problem on W 1}
\int_{\Omega_1}  \Q\, \vone \cdot \wone 
- \int_{\Omega_1}\pone \,\grad\cdot\wone 
-  \int_{\Omega_2} \ptwo \, \grad_{\!T} \cdot \wtan 
-  \int_{\Omega_2} \ptwo \,\partial_z\,\wnorm 
\\
+\int_{\Omega_{2}}\E\,\grad_{\!T}\,\vtan:\grad_{\!T}\,\wtan 
+\int_{\,\Omega_2}\E\,\partial_z\,\xi\, \partial_z\wnorm 
\\
+ \alpha\,\int_{\Gamma}
\big(\,\vone\cdot\n\,\big)\,\big(\,\wone\cdot\n\,\big)\,d S
+ \int_{\Gamma} \beta \, \sqrt{\Q} \,\vtan \cdot \wtan
 \,d S 
= \int_{\Omega_2} {\mathbf f}^{2}_{\,T}
\,\cdot\, \wtan , 
\end{multline}
\begin{equation}  \label{limit problem on W 2}
\int_{\Omega_1}\grad\cdot\vone \varphi^1 
+ \int_{\Omega_2} \grad_{\!T}\cdot\big(\,\vtan\,\big)\,\varphi^2 
\\
+ \int_{\Omega_2}  \partial_{z} \xi \,\varphi^2 
= \int_{\Omega_1} h^{1} \, \varphi^1 . 
\end{equation}
\end{subequations}
Since the variational statements above hold for all $[\w,\varphi
]\in \W\times\Y^0$ and the bilinear forms are continuous with respect
to the space $\X^0\times \Y^0$, we can extend them by density to all
test functions $[\w,\varphi ]\in \X^0\times\Y^0$; these yield 
\eqref{formulation limit problem}. Finally, the mixed variational
characterization \eqref{limit problem mixed formulation} follows
immediately from the definition of the bilinear forms $A$ and $B$
given in \eqref{limit operator A} and \eqref{limit operator B },
respectively.
\qed
\end{proof}

The existence of a solution of Problem \eqref{limit problem mixed
formulation} follows from that of the limits above.  For an
independent proof of the well-posedness of Problem \eqref{limit
problem mixed formulation} we prepare the following intermediate
results.
\begin{lemma} \label{Th coercivity of A in the limit}
The operator $A$ is $\X^0$-coercive over $\X^0\cap \ker (B)$.
\end{lemma}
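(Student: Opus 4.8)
The plan is to mimic the trick used for the $\epsilon$-problem in Lemma \ref{Th coercivity of A epsilon}, namely to augment the bilinear form $A\v(\v)$ with a divergence term that vanishes on $\ker(B)$, and then to recognize the augmented form as an equivalent norm on $\X^0$. First I would write out, for $\v = [\vone, \vtwo] \in \X^0$ with $\vtwo = [\vtan, \xi]$, the explicit diagonal value
\begin{equation*}
A\v(\v) = \int_{\Omega_1} \Q\,\vone\cdot\vone
+ \alpha \int_{\Gamma} (\vone\cdot\n)^2\, dS
+ \int_{\Gamma} \beta\,\sqrt{\Q}\,\vtan\cdot\vtan\, dS
+ \int_{\Omega_2} \E\,\grad_{\!T}\vtan : \grad_{\!T}\vtan
+ \int_{\Omega_2} \E\,(\partial_z\xi)^2 .
\end{equation*}
Using Hypothesis \ref{Hyp Bounds on the Coefficients} ($\Q$ elliptic with constant $C_\Q$, $\mu = \E > 0$, $\alpha,\beta \ge 0$), the three nonnegative boundary/interior terms controlled from below give $A\v(\v) \ge C\big(\|\vone\|_{0,\Omega_1}^2 + \|\grad_{\!T}\vtan\|_{0,\Omega_2}^2 + \|\partial_z\xi\|_{0,\Omega_2}^2\big)$.

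Next I would add $\int_{\Omega_1}(\grad\cdot\vone)^2$, which is legitimate because for $\v \in \ker(B)$ we have $\grad\cdot\vone = 0$ a.e.\ in $\Omega_1$ (take $\varphi = [\grad\cdot\vone, 0] \in \Y^0$ in $B\v(\varphi)=0$, or simply note $B\v = 0$ forces both components to vanish). So on $\X^0 \cap \ker(B)$,
\begin{equation*}
A\v(\v) = A\v(\v) + \int_{\Omega_1}(\grad\cdot\vone)^2
\ge C\Big(\|\vone\|_{0,\Omega_1}^2 + \|\grad\cdot\vone\|_{0,\Omega_1}^2
+ \|\grad_{\!T}\vtan\|_{0,\Omega_2}^2 + \|\partial_z\xi\|_{0,\Omega_2}^2\Big) .
\end{equation*}
It then remains to bound the full $\X^0$-norm of $\v$ — which involves $\|\vone\|_{\Hdiv(\Omega_1)}$, $\|\vtan\|_{1,\Omega_2}$ (i.e.\ also $\|\vtan\|_{0,\Omega_2}$) and $\|\xi\|_{\Hpartial}$ (i.e.\ also $\|\xi\|_{0,\Omega_2}$) — by the right-hand side above. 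The $\Hdiv(\Omega_1)$ part is already done. For the remaining $L^2$ norms I would invoke the Poincaré-type inequalities of Lemma \ref{Th Trace on One Derivative Space}: since $\vtan = \vtan(\widetilde x)$ vanishes on $\partial\Gamma$, a Poincaré inequality on $\Gamma$ gives $\|\vtan\|_{0,\Omega_2} \le C\|\grad_{\!T}\vtan\|_{0,\Omega_2}$; and for $\xi$, estimate \eqref{Ineq Conrol by Trace on One Derivative Space} gives $\|\xi\|_{0,\Omega_2} \le \sqrt2(\|\partial_z\xi\|_{0,\Omega_2} + \|\xi\|_{0,\Gamma})$, while the defining constraint of $\X^0$ gives $\xi\vert_\Gamma = \vone\cdot\n\vert_\Gamma$, whose $L^2(\Gamma)$-norm is controlled by $\|\vone\|_{\Hdiv(\Omega_1)}$ via the normal trace theorem (Lemma \ref{Th Surjectiveness from Hdiv to H^1/2}) — alternatively by \eqref{Ineq Trace on One Derivative Space} applied to $\xi$ itself. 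Combining these gives $\|\v\|_{\X^0}^2 \le c\, A\v(\v)$ for all $\v \in \X^0 \cap \ker(B)$, which is the claim.

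The only delicate point — the "main obstacle" — is controlling the $L^2$ norms of $\vtan$ and $\xi$, since $A\v(\v)$ only sees their \emph{derivatives} $\grad_{\!T}\vtan$ and $\partial_z\xi$. Everything hinges on the extra structure built into $\X^0$: the boundary condition $\vtan = \0$ on $\partial\Gamma$ and the coupling $\xi\vert_\Gamma = \vone\cdot\n\vert_\Gamma$ on $\Gamma$, together with $\xi(\widetilde x,1)=0$. These are exactly what make the Poincaré-type inequalities applicable and tie the $L^2$-mass of $\xi$ back to $\vone$, which is coercively controlled through the elliptic tensor $\Q$. Once that linkage is made explicit the rest is a routine chain of inequalities.
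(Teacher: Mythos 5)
Your overall approach is the same as the paper's (which is stated in one line: augment $A\v(\v)$ by $\int_{\Omega_1}(\grad\cdot\vone)^2$, which vanishes on $\ker(B)$, and observe that the augmented form is $\X^0$-coercive). The expansion of $A\v(\v)$, the use of ellipticity of $\Q$, the augmentation trick, and the $\Gamma$-Poincar\'e bound for $\|\vtan\|_{0,\Omega_2}$ in terms of $\|\grad_{\!T}\vtan\|_{0,\Omega_2}$ (using $\vtan=\0$ on $\partial\Gamma$) are all correct.

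However, the step where you close the bound on $\|\xi\|_{0,\Omega_2}$ has a genuine error. You invoke \eqref{Ineq Conrol by Trace on One Derivative Space} to reduce to controlling $\|\xi\|_{0,\Gamma}$, and then claim this $L^2(\Gamma)$-norm is controlled by $\|\vone\|_{\Hdiv(\Omega_1)}$ ``via the normal trace theorem (Lemma \ref{Th Surjectiveness from Hdiv to H^1/2}).'' That lemma gives the \emph{reverse} estimate: for a prescribed $g\in L^2(\partial G)$ it constructs $\u$ with $\|\u\|_{\Hdiv}\le K\|g\|_{0,\partial G}$. It does \emph{not} assert $\|\u\cdot\n\|_{L^2(\Gamma)}\le K\|\u\|_{\Hdiv}$ -- indeed the normal trace of a generic $\Hdiv$ function lives only in $H^{-1/2}$, so no such $L^2$ bound holds. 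Your parenthetical alternative, applying \eqref{Ineq Trace on One Derivative Space} to $\xi$, is also in the wrong direction: that inequality bounds $\|\xi\|_{0,\Gamma}$ \emph{by} $\|\xi\|_{0,\Omega_2}+\|\partial_z\xi\|_{0,\Omega_2}$, which is circular here.

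The correct and simpler route is the one you gesture at only in your closing summary: the definition of $\X_2^0$ already imposes $\xi(\widetilde x,1)=0$ on $\Gamma+1$. Writing $\xi(\widetilde x,z) = -\int_z^1\partial_s\xi(\widetilde x,s)\,ds$ and applying Cauchy--Schwarz gives $\|\xi\|_{0,\Omega_2}\le\|\partial_z\xi\|_{0,\Omega_2}$ directly (the analogue of \eqref{Ineq Conrol by Trace on One Derivative Space} anchored at the \emph{top} face where the trace vanishes). With this substitution the rest of your argument goes through and establishes the coercivity.
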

\begin{proof}
The form
$
\displaystyle A\v\,(\v)+\int_{\Omega_1}\big(\grad\cdot\v\big)^{2}$ is $\X^0$-coercive, and $\grad \cdot \v\,\vert_{\,\Omega_1}=0$ whenever $\v \in \ker(B)$.
\qed
\end{proof}
\begin{lemma}\label{Th B closed range in the limit}
The operator $B$ has closed range.
\end{lemma}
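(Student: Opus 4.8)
The claim is that the operator $B:\X^0 \to (\Y^0)'$, defined by $B\v(\varphi) = \int_{\Omega_1}\grad\cdot\vone\,\varphi^1 + \int_{\Omega_2}\grad\cdot[\vtan,\xi]\,\varphi^2$, has closed range. The cleanest route is to establish the inf-sup condition for $B$ over $\X^0 \times \Y^0$, which is equivalent to closed range together with the observation that $B$ is surjective onto $(\Y^0)'$. Following the pattern of Lemma \ref{Th B closed range epsilon}, I would exhibit a sufficiently rich subspace of $\X^0$ on which the supremum can be estimated from below. The key point is that $\Y^0 \cong L^2(\Omega_1)\times L^2(\Gamma)$ splits into a piece paid for by Darcy velocities in $\Omega_1$ and a lower-dimensional piece on $\Gamma$.

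First I would fix $\varphi = (\varphi^1,\varphi^2) \in \Y^0$, so $\varphi^2 = \varphi^2(\widetilde{x}) \in L^2(\Gamma)$. The aim is to produce $\v = (\vone, [\vtan,\xi]) \in \X^0$ with $\grad\cdot\vone = \varphi^1$ in $\Omega_1$ and controlling $\varphi^2$ through the $\Omega_2$ term. Note that for $\v \in \X^0$ the space $\X_2^0$ forces $\partial_z \xi = \partial_z\xi(\widetilde{x})$ and $\xi(\widetilde{x},1)=0$, so $\int_{\Omega_2}\partial_z\xi\,\varphi^2\,d\widetilde{x}\,dz = -\int_\Gamma \xi\vert_\Gamma\,\varphi^2\,d\widetilde{x}$ after integrating in $z$ (using $\varphi^2$ independent of $z$), and $\xi\vert_\Gamma = \wone\cdot\n\vert_\Gamma = \vone\cdot\n$ by the constraint in \eqref{space of the limit velocity}. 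Also $\int_{\Omega_2}\grad_{\!T}\cdot\vtan\,\varphi^2$ can be made to vanish by choosing $\vtan = \0$. Hence the problem reduces to: given $(\varphi^1,\varphi^2)$, find $\vone \in \Hdiv(\Omega_1)$ with $\grad\cdot\vone = \varphi^1$ and $\vone\cdot\n\vert_\Gamma$ prescribed (equal to some $g$), with $\vone\cdot\n = 0$ on $\partial\Omega_1 - \Gamma$, and bounded norm. This is a standard solvability statement for the divergence with mixed normal-trace data: one first picks any $\u_0 \in \Hdiv(\Omega_1)$ realizing the boundary datum via Lemma \ref{Th Surjectiveness from Hdiv to H^1/2}, then solves $\grad\cdot\u_1 = \varphi^1 - \grad\cdot\u_0$ with $\u_1 \in \mathbf{H}^1_0$-type space (Bogovskii / the classical surjectivity of divergence on $H^1_0$, as in Theorem 3.7 of \cite{GirRav79}), and sets $\vone = \u_0 + \u_1$, $\xi$ the constant-in-$z$-derivative extension of $g = \vone\cdot\n\vert_\Gamma$, $\vtan = \0$.

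The remaining arithmetic is to check that with this $\v$ one gets $B\v(\varphi) \geq c(\|\varphi^1\|_{0,\Omega_1}^2 + \|\varphi^2\|_{0,\Gamma}^2)$ while $\|\v\|_{\X^0} \leq C(\|\varphi^1\|_{0,\Omega_1} + \|\varphi^2\|_{0,\Gamma})$; a possible subtlety is that $g = \vone\cdot\n\vert_\Gamma$ is not yet under our control, so I would instead run the construction in the order: first choose $\vone$ solving the pure divergence equation $\grad\cdot\vone = \varphi^1$ with $\vone\cdot\n = 0$ on all of $\partial\Omega_1$ (possible when $\int_{\Omega_1}\varphi^1 = 0$; the complementary one-dimensional space of constants is handled separately or absorbed), then add a divergence-free corrector in $\Hdiv(\Omega_1)$ whose normal trace on $\Gamma$ is a chosen multiple of $\varphi^2$ and vanishes on $\partial\Omega_1-\Gamma$, again via Lemma \ref{Th Surjectiveness from Hdiv to H^1/2}. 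Then $\xi$ is forced to match this normal trace on $\Gamma$, and the $\Gamma$-integral produces the $\|\varphi^2\|^2_{0,\Gamma}$ term.

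The main obstacle is bookkeeping the coupling constraint $\wone\cdot\n = \wnorm = \wtwo\cdot\n$ on $\Gamma$ together with the mean-value compatibility for the divergence on $\Omega_1$; once one is careful that every corrector respects $\vone\cdot\n = 0$ on $\partial\Omega_1 - \Gamma$ and that the normal trace on $\Gamma$ is exactly the datum fed into $\xi$, the inf-sup lower bound is immediate and gives closed range. Alternatively — and this is the shortest honest argument — one may simply note $\W \subseteq \X^0$ is dense (Lemma \ref{Th Density Result}) and that $B$ restricted to $\Hdiv(\Omega_1)\times\{\0\}$-type test fields already surjects onto the first component, combined with the surjectivity of the normal trace onto $L^2(\Gamma)$ for the second component, so $\Rg(B) = (\Y^0)'$ is all of the (closed) dual; hence the range is trivially closed.
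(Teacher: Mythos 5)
Your proposal is essentially correct and follows the same overall strategy as the paper: fix $\varphi=(\varphi^1,\varphi^2)\in\Y^0$, exploit the $\X^0$-constraint $\xi\vert_\Gamma = \vone\cdot\n\vert_\Gamma$ together with $\xi(\cdot,1)=0$ to turn the $\Omega_2$-term into an $L^2(\Gamma)$-pairing, take $\vtan=\mathbf 0$, and reduce the task to constructing $\vone\in\Hdiv(\Omega_1)$ with prescribed divergence $\varphi^1$ and a normal trace on $\Gamma$ proportional to $\varphi^2$, supported away from $\partial\Omega_1-\Gamma$. Where you differ from the paper is in how that $\vone$ is built: you propose a two-step Bogovskii-plus-normal-trace-lift construction, whereas the paper solves a single auxiliary Poisson problem $-\Delta\phi=\varphi^1$ in $\Omega_1$, $\partial\phi/\partial\n=\varphi^2$ on $\Gamma$, $\phi=0$ on $\partial\Omega_1-\Gamma$, and sets $\uone=-\grad\phi$ (equation \eqref{Pblm Auxiliary Problem}). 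This mixed Dirichlet--Neumann choice is well-posed without any compatibility condition, which cleanly sidesteps the mean-value bookkeeping you flag when you try to impose $\vone\cdot\n=0$ on all of $\partial\Omega_1$; the space $\Y^0 = L^2(\Omega)$ restricted to $z$-independent $\varphi^2$ carries no mean-zero normalization at this stage of the paper, so there is nothing to absorb --- the freedom on $\partial\Omega_1-\Gamma$ does the work. Your closing ``shortest honest argument'' (surjectivity of $B$, hence trivially closed range) is the right conclusion, but it is not self-contained as written: the two cited surjectivity facts must still be combined into a single admissible element of $\X^0$ satisfying the interface coupling, which is exactly what the explicit construction in either of the two preceding paragraphs accomplishes. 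With the single-Poisson-problem construction substituted for your two-step one, the chain of inequalities you sketch gives precisely the inf-sup bound \eqref{Ineq Chain of Inequalities for Closed Range}.
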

\begin{proof} 
Fix $\varphi = [\,\varphi^1,\,\varphi^{2}\,]\in \Y^0$. With
$\varphi^{2} = \varphi^{2} (\widetilde{x}\,)\in L^2(\Gamma)$, solve
the auxiliary problem
\begin{align}\label{Pblm Auxiliary Problem}
& -\div  \grad \phi = \varphi^{1}\quad\text{in }\, \Omega_{1}, &
& \grad \phi \cdot \n =  \varphi^{2} \quad \text{on }\, \Gamma, &
& \phi = 0 \quad \text{on }\, \partial \Omega_{1} - \Gamma .
\end{align}
Then $\uone = - \grad \phi$ satisfies $\div \uone = \varphi^{1}$, $\uone\cdot \n = - \varphi^{2}$ and 
\begin{equation}\label{Ineq Estimate on the Auxiliary function one}
\Vert \uone \Vert_{\Hdiv(\Omega_{1})} \leq C_{1}\, \big(\Vert \varphi^{1} \Vert_{0,\Omega_{1}}^{2} + \Vert \varphi^{2} \Vert_{0, \Omega_{2}}^{2} \big)^{1/2} ,
\end{equation}
because $\Vert \varphi^{2}\Vert_{L^{2}(\Gamma)} = \Vert
\varphi^{2}\Vert_{L^{2}(\Omega_{2})} $. Next, define $\unorm(\xthilde,
z) \defining - \varphi^{2}(\xthilde)(1 - z)$. The function
$\u =\big(\u^1, [\,\0_{T}, \unorm\,]\big)$
belongs to the space $\X^0$ (see Figure \ref{Fig Limit Solution
Schematics}), and
\begin{equation}\label{Ineq Estimate on the whole Auxiliary function}
\Vert \u \Vert_{\,\X^0}\leq C \big(
\Vert \u^1 \Vert_{\Hdiv(\Omega_{1})}^{2}
+\Vert \unorm\Vert_{\Hpartial }^{2}\big)^{1/2}\leq
\widetilde{C}
\big(\Vert \varphi^1 \Vert_{0,\Omega_{1}}^{2}+\Vert \varphi^{2} \Vert_{0,\Omega_2}^{2}\big)^{1/2} .
\end{equation}
Here $\widetilde{C}$ depends on the domains $\Omega_1, \Omega_{2}$ as well as 
the equivalence of norms for 2-D vectors, but it is independent
of $\varphi \in \Y^0$. Moreover, notice that $\div [\0_{T}, \unorm] = \varphi^{2}$. Hence, we have the inequalities
\begin{multline}\label{Ineq Chain of Inequalities for Closed Range}
\sup _{\w\in\X^0} \frac{\int_{\Omega}\varphi \,\grad\cdot\w\,dx}{\Vert \w \Vert_{\,\X^0}}
\geq 
\frac{\int_{\Omega_1}\varphi^1\, \grad \cdot \u^1\,dx
+\int_{\Omega_2}\varphi^{2} \,\div [\0_{T}, \unorm] \,d\widetilde{x}\,dz}{\Vert \u \Vert_{\,\X^0}}
\\
\geq \frac{1}{\widetilde{C}}\,
\frac{\Vert \varphi^1 \Vert_{0,\Omega_{1}}^{2}+\Vert \varphi^{2} \Vert_{0,\Omega_2}^{2}}
{(\Vert \varphi^1 \Vert_{0,\Omega_{1}}^{2}+\Vert \varphi^{2} \Vert_{0,\Omega_2}^{2})^{1/2}} 
=
\frac{1}{\widetilde{C}}\,\big(\Vert \varphi^1 \Vert_{0,\Omega_{1}}^{2}
+\Vert \varphi^{2} \Vert_{0,\Omega_2}^{2}\big)^{1/2}
\\
= \frac{1}{\widetilde{C}} \Vert \varphi  \Vert_{0,\Omega}\,,\quad
\forall\,\varphi \in \Y^0 .
\end{multline}
\qed
\end{proof}
\begin{theorem}\label{Th well-posedness of the limiting problem}
The Problem \eqref{limit problem
mixed formulation} is well-posed.
\end{theorem}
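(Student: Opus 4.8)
The plan is to read off the well-posedness directly from the abstract saddle-point result, Theorem~\ref{Th well posedeness mixed formulation classic}, applied with the Hilbert spaces $\X = \X^0$, $\Y = \Y^0$, the continuous operators $A,B$ of \eqref{Def Limit Mixed Operators}, and with the third operator taken to be $C = 0$; indeed the second equation of \eqref{limit problem mixed formulation} carries no pressure term, so \eqref{limit problem mixed formulation} is exactly of the form \eqref{Pblm operators abstrac system} with $C=0$. All that then remains is to check the three hypotheses (i)--(iii), and each of them has in fact already been proved.

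For hypothesis (i), the non-negativity of $A$ is immediate from \eqref{limit operator A}: a direct computation gives $A\v(\v) = \int_{\Omega_1}\Q\,\vone\cdot\vone + \alpha\int_{\Gamma}(\vone\cdot\n)^2\,dS + \int_{\Gamma}\beta\,\sqrt{\Q}\,\vtan\cdot\vtan\,dS + \int_{\Omega_2}\E\,\grad_{\!T}\vtan:\grad_{\!T}\vtan + \int_{\Omega_2}\E\,(\partial_z\xi)^2$, and every summand is $\ge 0$ by Hypothesis~\ref{Hyp Bounds on the Coefficients}. The $\X^0$-coercivity of $A$ on $\ker(B)$ is precisely Lemma~\ref{Th coercivity of A in the limit}: on $\ker(B)$ one has $\grad\cdot\vone = 0$, so the $\X^0$-coercive form $A\v(\v) + \int_{\Omega_1}(\grad\cdot\vone)^2$ reduces to $A\v(\v)$, the $\Hdiv(\Omega_1)$-norm of $\vone$ being controlled by $\Q$-ellipticity together with the vanishing divergence, and the full $\X_2^0$-norm of $\vtwo = [\vtan,\xi]$ being recovered from $\grad_{\!T}\vtan$ (via the Poincaré inequality on $\Gamma$, using $\vtan=\0$ on $\partial\Gamma$) and from $\partial_z\xi$ (via the Poincaré-type inequality \eqref{Ineq Conrol by Trace on One Derivative Space} of Lemma~\ref{Th Trace on One Derivative Space} and the boundary condition $\xi(\widetilde x,1)=0$).

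Hypothesis (ii), the inf-sup condition for $B$, is supplied by Lemma~\ref{Th B closed range in the limit}: the chain \eqref{Ineq Chain of Inequalities for Closed Range} produces, for each $\varphi\in\Y^0$, an explicit competitor $\u\in\X^0$ showing $\sup_{\w\in\X^0} B\w(\varphi)/\Vert\w\Vert_{\X^0}\ \ge\ (1/\widetilde C)\,\Vert\varphi\Vert_{0,\Omega}$ with $\widetilde C$ independent of $\varphi$; dividing by $\Vert\varphi\Vert_{\Y^0}$ and taking the infimum over $\varphi\in\Y^0$ gives \eqref{Ineq general inf-sup condition} with inf-sup constant at least $1/\widetilde C>0$. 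Hypothesis (iii) is trivial since $C=0$ is symmetric and non-negative. Invoking Theorem~\ref{Th well posedeness mixed formulation classic} then yields a unique $[\v,p]\in\X^0\times\Y^0$ solving \eqref{limit problem mixed formulation}, along with the stability estimate \eqref{mix-est}. I do not anticipate a real obstacle: this theorem is an assembly of Lemmas~\ref{Th coercivity of A in the limit} and~\ref{Th B closed range in the limit}. The only points deserving a line of care are confirming that the limiting system is genuinely of saddle-point type with $C=0$, and observing that the coercivity constant of Lemma~\ref{Th coercivity of A in the limit} stays strictly positive even when $\alpha$ vanishes — which is why the trace $\Vert\xi\Vert_{0,\Gamma}$ is absorbed via $\xi(\widetilde x,1)=0$ and the fundamental theorem of calculus in $\Hpartial$ rather than through the $\alpha$-term.
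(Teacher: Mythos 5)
Your proposal coincides with the paper's proof: both apply the abstract saddle-point Theorem~\ref{Th well posedeness mixed formulation classic} with $C=0$, citing Lemma~\ref{Th coercivity of A in the limit} for the coercivity hypothesis and Lemma~\ref{Th B closed range in the limit} for the inf-sup condition. The extra detail you supply in unpacking those lemmas (the $\Q$-ellipticity plus the divergence term for $\Hdiv(\Omega_1)$, Poincar\'e on $\Gamma$ for $\vtan$, and the $\Hpartial$ Poincar\'e-type inequality with $\xi(\widetilde{x},1)=0$) is correct and consistent with what the paper asserts.
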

\begin{proof}
Due to Lemmas \ref{Th coercivity of A in the limit} and \ref{Th B
closed range in the limit} above, the operators $A$ and $B$ satisfy
the hypotheses of Theorem \ref{Th well posedeness mixed formulation
classic} and the result follows.
\qed
\end{proof}
%
%
%
\begin{remark}\label{Rem Regularity Demmand Discussion}
Note that the proof of Lemma \ref{Th B closed range in the limit} for
the limit problem \eqref{limit problem mixed formulation} is
substantially different from the corresponding Lemma \ref{Th B closed
range epsilon} for the $\epsilon$-problem \eqref{Def Scaled Problem
MIxed Formulation}. This is due to the respective spaces $\X^0$ and
$\X$; the condition $\wone\cdot \n = \wtwo\cdot \n$ on $\Gamma$ is
significantly different in terms of regularity, from one case to the
other. Specifically, in the case of the limit problem $\wone\cdot
\n \big\vert_{\Gamma}\in L^{2}(\Gamma)$, while in the
$\epsilon$-problem $\wone\cdot \n \big\vert_{\Gamma}\in
H^{1/2}(\Gamma)$. The demands of normal trace regularity on $\Gamma$
are weakened in the limit as a consequence of the upscaling process.
\end{remark}
%
%
%
\begin{corollary}\label{Th Weak convergence of the whole sequence}
Let $\big\{\big[\veps, \peps \big]: \epsilon > 0 \big\}\subseteq
\X\times \Y$ be the sequence of solutions to the family of problems
\eqref{problem fixed geometry}, then the whole sequence converges
weakly to $[\v, p]\in \X^0\times \Y^0$, the solution of Problem
\eqref{formulation limit problem}.
\end{corollary}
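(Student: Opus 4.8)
The plan is to upgrade the subsequential convergences obtained in Section~\ref{sec-convergence} to convergence of the full family, via the classical subsequence principle together with the uniqueness of the limit problem. First I would record boundedness: by Theorem~\ref{Th A-priori Estimates of Velocity} and Lemma~\ref{Th Convergence of Pressure One}, the quantities $\vepsone$, $\epsilon\,\vtaneps$, $\partial_z\vtaneps$, $\vnormeps$, $\pepsone$ and $\pepstwo$ are bounded, uniformly in $\epsilon>0$, in the reflexive spaces $\Hdiv(\Omega_1)$, $[H^{1}(\Omega_2)]^{N-1}$, $[L^{2}(\Omega_2)]^{N-1}$, $\Hpartial$, $H^{1}(\Omega_1)$ and $L^{2}(\Omega_2)$, respectively. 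Hence the family $\{[\veps,\peps]:\epsilon>0\}$, seen through these bounded images, is relatively weakly sequentially compact in the product space.

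Next I would identify every weak cluster point. Given any sequence $\epsilon_k\downarrow 0$, reflexivity yields a subsequence along which the six quantities above converge weakly to limits $\vone$, $\vtan$, $\eta$, $\xi$, $\pone$, $\ptwo$. By Corollary~\ref{Th Direct Weak Convergence of Velocities} and Lemma~\ref{Th Convergence of Pressure One} the pair $[\v,p]$ with $\vtwo=[\vtan,\xi]$ lies in $\X^0\times\Y^0$; by Theorem~\ref{Th Dependence of xi and Pressure 2} and Theorem~\ref{Th Formulation of the Limiting Problem} it solves the variational problem \eqref{formulation limit problem}, equivalently the mixed problem \eqref{limit problem mixed formulation}. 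The auxiliary limit $\eta=\wklim\partial_z\vtaneps$ never enters \eqref{formulation limit problem}, because test functions in $\W$ (and in $\X^0$) have $z$-independent tangential part, so it imposes no constraint and need not be tracked. By Theorem~\ref{Th well-posedness of the limiting problem}, the solution of \eqref{limit problem mixed formulation} is unique, hence $\vone$, $\vtan$, $\xi$, $\pone$, $\ptwo$ are the same for every choice of subsequence.

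Finally I would invoke the elementary fact that a bounded sequence in a reflexive Banach space all of whose weak cluster points coincide with a fixed element converges weakly to that element; applying it componentwise to $\vepsone$, $\epsilon\,\vtaneps$, $\vnormeps$, $\pepsone$ and $\pepstwo$ shows that the whole family converges weakly, with no subsequence extraction, to the components of the unique solution $[\v,p]\in\X^0\times\Y^0$ of \eqref{formulation limit problem}. I do not expect a genuine obstacle here; the only delicate point is bookkeeping, namely that ``weak convergence of the whole sequence'' abbreviates the several componentwise weak convergences, each in its own space, and that Theorem~\ref{Th well-posedness of the limiting problem} pins down precisely those components and not the spurious limit $\eta$.
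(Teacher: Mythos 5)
Your argument is correct and is essentially the paper's own: combine uniqueness of the solution to the limit problem (Theorem~\ref{Th well-posedness of the limiting problem}) with the standard subsequence principle, so that every subsequence has a further subsequence weakly converging to the same $[\v,p]$, whence the whole family converges. Your observation that the spurious weak limit $\eta$ of $\partial_z\vtaneps$ is not a component of $[\v,p]$ and plays no role in \eqref{formulation limit problem}, hence need not be pinned down by uniqueness, is a useful clarification that the paper leaves implicit.
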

\begin{proof}
Due to the well-posedness shown in Theorem \ref{Th well-posedness of
the limiting Problem}, the solution $[\v, p]\in \X^0\times \Y^0$ of
problem \eqref{limit problem mixed formulation} is unique. On the
other hand, all the reasoning from Section \ref{Sec Weak Convergence
os Subsequence} on, is applicable to any subsequence of
$\big\{\big[\veps, \peps \big]: \epsilon > 0 \big\}$; which yields a
further subsequence weakly convergent to $[\v, p]$. Hence, the result
follows.
\qed
\end{proof}
\begin{figure}[h] 
	\centering
	\begin{subfigure}[Limit Solutions in the Domain of Reference. ]
		{\resizebox{7cm}{8cm}
			{\includegraphics{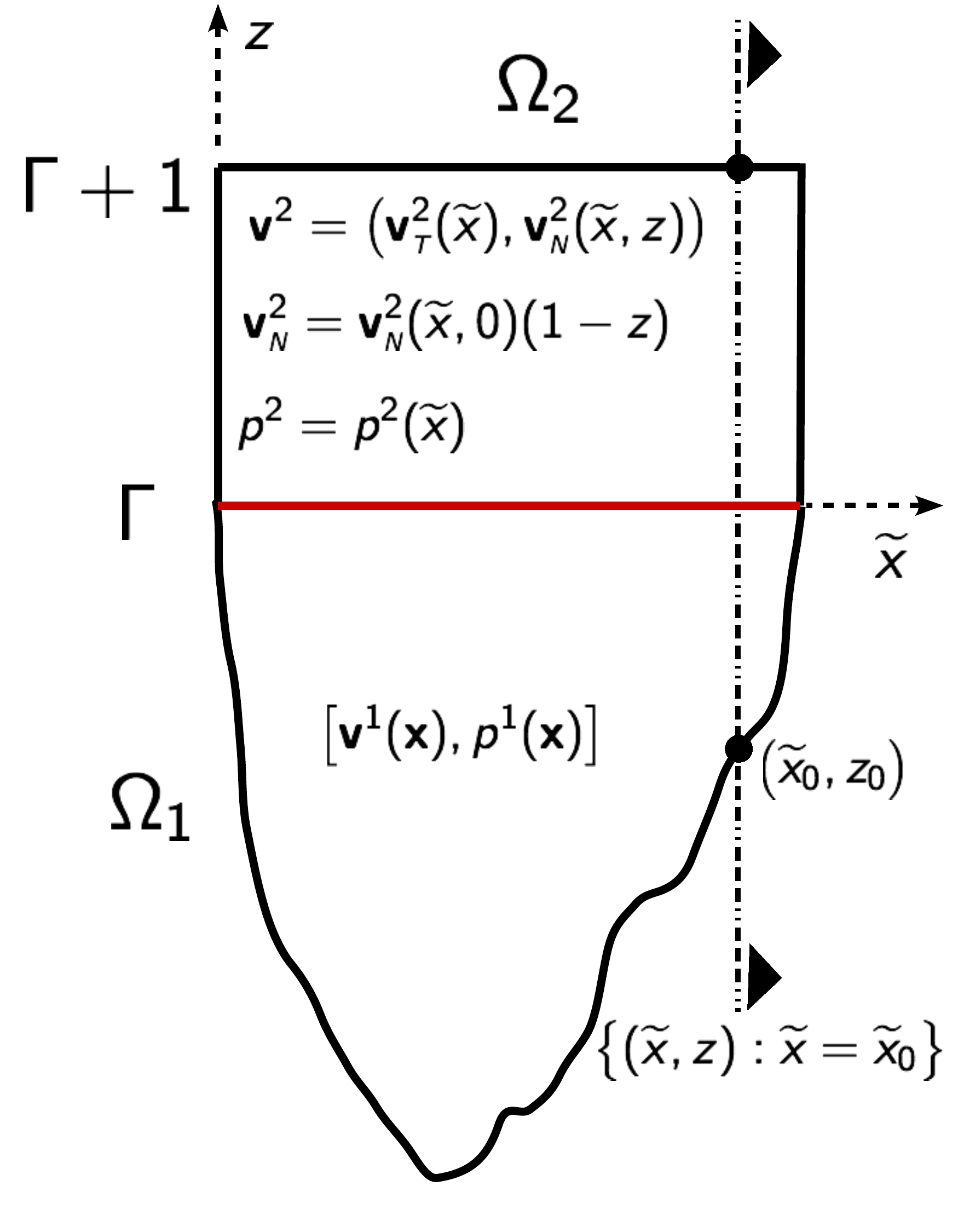} } }
	\end{subfigure} 
	\qquad
	~ 
	\begin{subfigure}[Velocity and Pressure Schematic Traces for the Solution on the hyperplane $\big\{ (\widetilde{x}, z):\widetilde{x} = \widetilde{x}_{0} \big\}$.]
		{\resizebox{7cm}{8cm}
			{\includegraphics{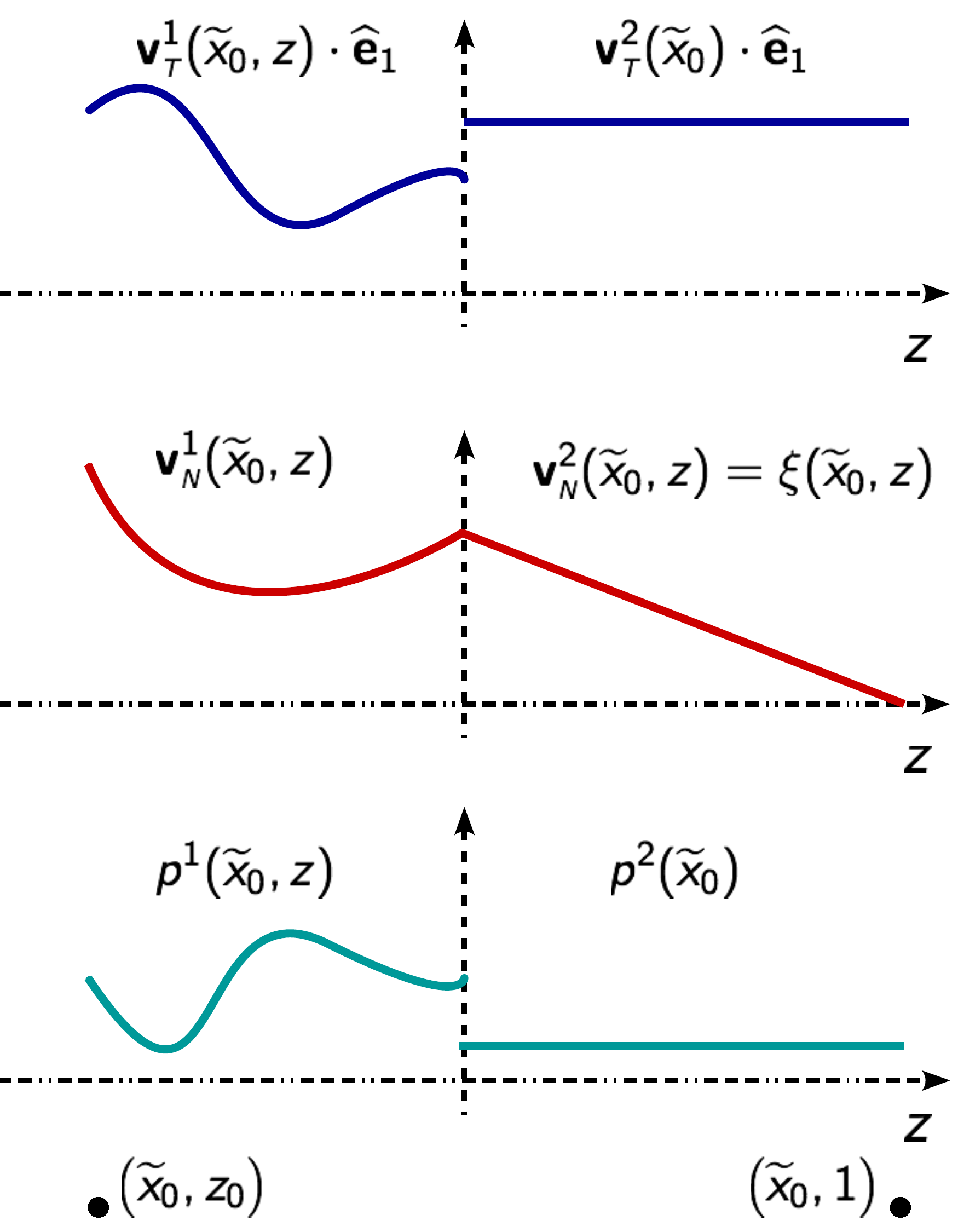} } }                
	\end{subfigure} 
	%
	%
	%
	\caption{Figure (a) shows the dependence of the limit solution $\big[\u, p\big]$ according to the respective region. Figure (b) depicts some plausible schematics of the velocity and pressure traces on the hyperplane $\big\{ (\widetilde{x}, z):\widetilde{x} = \widetilde{x}_{0} \big\}$.}\label{Fig Limit Solution Schematics}
\end{figure}
%
\subsection{Dimension Reduction}
The limit tangential velocity and pressure in $\Omega_2$ are
independent of $x_{\scriptscriptstyle N}$ (see
\eqref{solutionconvergence} and \eqref{Eq Dependence of xi and
Pressure 2}). Consequently, the spaces $\X^0$, $\Y^0$ and the problem
\eqref{formulation limit problem} can be dimensionally reduced to
yield a coupled problem on $\Omega_1 \times \Gamma$. To that end, we
first modify the function spaces. For the pressures we define
the space
\begin{equation}\label{reduced space of the limit pressure}
\Y^{00}\defining \Big\{
\big [\varphi^{1}, \varphi^{2} \big] \in 
L^{2}(\Omega_{1})\times L^{2}(\Gamma): 
\int_{\Omega_{1}}\varphi^{1} 
+ \int_{\Gamma}\varphi^{2}d\xthilde = 0  \Big\} ,
\end{equation}
endowed with its natural norm. For the velocities we define the space
\begin{subequations}\label{Def reduced space of the limit velocity}
\begin{equation}\label{reduced space of the limit velocity}
\X^{00}\defining
\Big\{\,\big(\wone,\wtwo \big)\in \Hdiv(\Omega_{1})\times
\big(H_{0}^{1}(\Gamma)\big)^{2}:
\wone\cdot\n \Big\vert_{\Gamma}\in L^{2}(\Gamma)\,\Big\} ,
\end{equation}
with the norm
\begin{equation}\label{norm reduced space of the limit velocity}
\big\Vert \big[\wone, \wtwo \big]\big\Vert_{\X^{00} }
\defining \Big(\big\Vert \wone\big\Vert_{\Hdiv}^{2} 
+ \big\Vert \wone\cdot \n \big\Vert_{0, \Gamma}^{2}
+ \big\Vert\wtwo \big\Vert_{1, \Gamma} 
\Big)^{1/2} .
\end{equation}
%
\end{subequations}
\begin{remark}\label{Rem Isomorphism with Lower Dimensional Spaces}
Clearly the pressure spaces $\Y^{00}$ and $\Y^0$ are isomorphic (see
Figure \ref{Fig Limit Solution Schematics}). It is also direct to see
that the application $\iota: \X^0\rightarrow \X^{00}$, given by
$\big[\wone, \wtwo \big]\mapsto \big[\wone, \wtan \big]$, is an
isomorphism, because $\wnorm$ is entirely determined by its trace on
$\Gamma$ and $\wnorm(\widetilde{x}, 0) = \wone\cdot \n$ (see Figure
\ref{Fig Limit Solution Schematics}).
\end{remark}
\begin{theorem}\label{Th Lower Dimensional Limiting Problem}
Let $[\v, p]$, with $\vtwo = \big[\vtan, \xi\big]$, be the weak limits found in Corollary \ref{Th Direct Weak Convergence of Velocities} and Lemma \ref{Th Convergence of Pressure One}. Then, making the corresponding identifications, $[\v, p]$ satisfies the following variational statement
\begin{subequations}\label{lower dimensional limit problem}
\begin{flushleft}
$[\,\v,p\,]\in \X^{00}\times \Y^{00}:$
\end{flushleft}
\vspace{-0.4cm}
\begin{multline} \label{lower dimensional limit problem 1}
\int_{\Omega_1}  \Q\, \vone \cdot \wone 
- \int_{\Omega_1}\pone \,\grad\cdot\wone 
+ (\,\E+\alpha\,)\,\int_{\Gamma}\big(\,\vone\cdot\n\,\big) 
\,\big(\,\wone\cdot\n\,\big)\,\,d\widetilde{x} ,
\\
+\int_{\Gamma} \ptwo \,\big(\,\wone\cdot\n\,\big)\,\,d\widetilde{x} 
-  \int_{\Gamma}\ptwo \, \grad_{\!T}\cdot\wtan \,d\widetilde{x}
+ \int_{\Gamma} \beta \, \sqrt{\Q} \,\,\vtan\, \cdot \wtan \, d \widetilde{x}
 +\int_{\Gamma}\E\,\grad_{\!T}\,\vtan:\grad_{\!T}\,\wtan \, d\widetilde{x}
 = \int_{\Gamma} {\mathbf f}^{2}_{T}
\,\cdot\, \wtan \,d\widetilde{x} ,
\end{multline}
\begin{equation}  \label{lower dimensional limit problem 2}
\int_{\Omega_1}\grad\cdot\vone\, \varphi^1 
-\int_{\Gamma} \, \big(\,\vone\cdot\n\,\big)\, \varphi^2  \,d\widetilde{x} 
+ \int_{\Gamma} \grad_{\!T}\cdot\,\vtan\, \varphi^2 \,d\widetilde{x}
= \int_{\Omega_1} h^{1} \, \varphi^1 , 
\end{equation}
\begin{flushright}
for all $[\,\w,\varphi \,]\in \X^{00}\times \Y^{00}$.
\end{flushright}
\end{subequations}
Furthermore, the mixed formulation of the Problem above is given by 
%
\begin{equation}\label{reduced limit problem mixed formulation}
\begin{split}
[\,\v,p\,]\in \X^{00}\times\Y^{00}:
\A\,\v - \B ' p &=\mathbf{f} , \\
%
\B\,\v  & = h ,
\end{split}
\end{equation}
with the forms $\A: \X^{00} \rightarrow (\X^{00})'$ and $\B :
\X^{00}\rightarrow (\Y^{00})'$ defined by
\begin{subequations}\label{Def reduced limit operators}
\begin{equation}\label{reduced limit operator A}
\A \defining
\left(\begin{array}{cc}\, \Q + \gamma_{\, \n}
'\,(\,\E+\alpha\,)\,\gamma_{\, \n}  
& \0 
\\
\0 & \,\,\beta\,\sqrt{\Q}
+\,(\grad_{\,T})'\,\E\,\grad_{\! T}\end{array}
\right) ,
\end{equation}
\begin{equation}\label{reduced limit operator B }
\B \defining
\left(\begin{array}{cc} \,\grad\cdot & 0 \\
-\gamma_{\, \n} &  \grad_{\!T}\cdot \end{array} \right)
=\left(\begin{array}{cc} \,\Div & 0 \\
-\gamma_{\, \n} &  \Div_{\scriptscriptstyle T}  \end{array} \right) .
\end{equation}
\end{subequations}
\end{theorem}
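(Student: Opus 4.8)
The plan is to obtain Theorem~\ref{Th Lower Dimensional Limiting Problem} as a pure rewriting of Theorem~\ref{Th Formulation of the Limiting Problem}, pushed forward through the isomorphisms $\iota:\X^0\to\X^{00}$ and $\Y^0\cong\Y^{00}$ of Remark~\ref{Rem Isomorphism with Lower Dimensional Spaces}. The starting observation is that in $\X^0$ every admissible normal component is \emph{affine} in $z$: for $\wtwo=[\,\wtan,\wnorm\,]\in\X_2^0$ the derivative $\partial_z\wnorm$ is independent of $z$, hence $\wnorm(\widetilde{x},z)=\wnorm(\widetilde{x},0)+z\,\partial_z\wnorm(\widetilde{x})$, and the conditions $\wnorm(\widetilde{x},1)=0$ together with $\wnorm(\widetilde{x},0)=\wone\cdot\n$ on $\Gamma$ (from \eqref{space of the limit velocity}) force $\wnorm(\widetilde{x},z)=(\wone\cdot\n)(\widetilde{x})(1-z)$ and $\partial_z\wnorm=-\,\wone\cdot\n$. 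The same reasoning applied to the limit solution, using \eqref{partial xi dependence} and \eqref{boundary conditions on normal velocity}, gives $\xi(\widetilde{x},z)=(\vone\cdot\n)(\widetilde{x})(1-z)$ and $\partial_z\xi=-\,\vone\cdot\n$, while $\vtan$, $\wtan$, $\ptwo$ and $\varphi^2$ are independent of $z$ by \eqref{solutionconvergence}, \eqref{pressure two dependence} and the definitions of $\X_2^0$, $\Y^0$.

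First I would substitute these expressions into \eqref{formulation limit problem 1}--\eqref{formulation limit problem 2} and integrate each $\Omega_2$-term in $z$ over $(0,1)$. The $\Omega_1$-integrals and the interface integrals $\alpha\int_\Gamma(\vone\cdot\n)(\wone\cdot\n)$ and $\int_\Gamma\beta\sqrt{\Q}\,\vtan\cdot\wtan$ are unaffected. The tangential volume terms $\int_{\Omega_2}\E\,\grad_{\!T}\vtan:\grad_{\!T}\wtan$ and $\int_{\Omega_2}\ptwo\,\grad_{\!T}\cdot\wtan$ simply lose the factor $\int_0^1 dz=1$ and become the corresponding integrals over $\Gamma$; the forcing $\int_{\Omega_2}\f^{2}_{T}\cdot\wtan$ becomes $\int_\Gamma\big(\int_0^1\f^{2}_{T}\,dz\big)\cdot\wtan\,d\widetilde{x}$, which is what the right-hand side of \eqref{lower dimensional limit problem 1} abbreviates. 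The normal terms are where the new structure appears: $-\int_{\Omega_2}\ptwo\,\partial_z\wnorm$ becomes $+\int_\Gamma\ptwo\,(\wone\cdot\n)\,d\widetilde{x}$, which is exactly the off-diagonal contribution that turns $B$ into the operator $\B$ of \eqref{reduced limit operator B } with its $-\gamma_{\n}$ entry, and
\[
\int_{\Omega_2}\E\,(\partial_z\xi)(\partial_z\wnorm)\,d\widetilde{x}\,dz=\int_{\Gamma}\E\,(\vone\cdot\n)(\wone\cdot\n)\,d\widetilde{x},
\]
which merges with $\alpha\int_\Gamma(\vone\cdot\n)(\wone\cdot\n)$ to give the combined coefficient $\E+\alpha$ appearing in \eqref{lower dimensional limit problem 1} and in $\A$. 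For the divergence identity, $\grad\cdot[\,\vtan,\xi\,]=\grad_{\!T}\cdot\vtan+\partial_z\xi=\grad_{\!T}\cdot\vtan-\vone\cdot\n$; integrating in $z$ and using that $\varphi^2$ is $z$-independent yields \eqref{lower dimensional limit problem 2}.

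These identities hold for all $[\,\w,\varphi\,]\in\X^0\times\Y^0$. Transporting them through $\iota$ (legitimate precisely because $\wnorm$, and hence also $\xi$, is recovered from its trace on $\Gamma$ by the affine formula above) and the identification $\Y^0\cong\Y^{00}$ places $[\v,p]$ in $\X^{00}\times\Y^{00}$ and gives \eqref{lower dimensional limit problem} verbatim. The mixed form \eqref{reduced limit problem mixed formulation} then follows by reading off the bilinear forms: the $\wone$-block of \eqref{lower dimensional limit problem 1} is $\langle\,(\Q+\gamma_{\n}'(\E+\alpha)\gamma_{\n})\,\vone,\wone\,\rangle$, the $\wtan$-block is $\langle\,(\beta\sqrt{\Q}+(\grad_{\!T})'\E\,\grad_{\!T})\,\vtan,\wtan\,\rangle$, the three remaining terms are $-\langle\,p,\B\w\,\rangle$ with $\B$ as in \eqref{reduced limit operator B }, and \eqref{lower dimensional limit problem 2} is $\langle\,\B\v,\varphi\,\rangle=\langle\,h,\varphi\,\rangle$.

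The step that needs the most care is the bookkeeping of the normal component: one must stay consistent about the profile $(1-z)$ versus its derivative $-1$, make sure the viscous pairing $\int_{\Omega_2}\E\,\partial_z\xi\,\partial_z\wnorm$ is genuinely converted into a $\Gamma$-integral that absorbs the entry-resistance term, and check that the sign of the new $-\gamma_{\n}$ block of $\B$ is exactly the one forced by $\partial_z\wnorm=-\wone\cdot\n$. Everything else is routine substitution together with the $z$-independence of the limit quantities already established in Corollary~\ref{Th Direct Weak Convergence of Velocities} and Theorem~\ref{Th Dependence of xi and Pressure 2}.
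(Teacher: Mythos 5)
Your proposal is correct and follows essentially the same route as the paper: the key observation is $\partial_z\wnorm=-\wnorm(\widetilde{x},0)=-\wone\cdot\n$ (forced by $z$-independence of $\partial_z\wnorm$ together with $\wnorm(\cdot,1)=0$ and the normal-trace matching in $\X^0$), after which one substitutes into \eqref{formulation limit problem}, integrates the $z$-variable out, absorbs $\int_{\Omega_2}\E\,\partial_z\xi\,\partial_z\wnorm$ into the entry-resistance term to produce the $\E+\alpha$ coefficient, and transports the result through the isomorphism of Remark~\ref{Rem Isomorphism with Lower Dimensional Spaces}. The only difference is that you spell out the affine profile $(1-z)$ explicitly, which the paper leaves implicit; otherwise the argument, including the identification of $\B$ with its $-\gamma_{\n}$ block and the reading-off of $\A$, matches the paper's proof.
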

\begin{proof}
Notice that if $\wtwo\in\X^0$ then $\partial_z\,\wnorm
=\partial_z\,\wnorm\, (\widetilde{x})= -\wnorm(\widetilde{x}, 0) = - \wtwo\cdot\n = -\wone\cdot\n$ (see Figure \ref{Fig Limit Solution Schematics} ).
Next, we introduce this observation in the Statement \eqref{formulation limit problem} above, together with the Identity \eqref{boundary conditions on normal velocity}; this gives
\begin{subequations} \label{simplified limit problem}
\begin{flushleft}
$[\,\v,p\,]\in \X^0\times \Y^0: $
\end{flushleft}
\vspace{-0.4cm}
\begin{multline}\label{simplified limit problem 1}
\int_{\Omega_1}  \Q\, \vone \cdot \wone  
- \int_{\Omega_1}\pone \, \grad\cdot\wone 
+ (\E+\alpha)\,\int_{\Gamma}\big(\,\vone\cdot\n\,\big) \,\big(\,\wone\cdot\n\,\big)\,\,d\widetilde{x}
\\
+\int_{\Gamma} \ptwo\,\big(\,\wone\cdot\n\,\big)\,\,d\widetilde{x} 
-  \int_{\Gamma}\ptwo \, \grad_{\!T}\cdot\wtan \,d\widetilde{x}
%
+\int_{\Gamma}\E\,\grad_{\!T}\,\vtan:\grad_{\!T}\,\wtan\;d\widetilde{x}
+ \int_{\Gamma} \beta \,  \sqrt{\Q} \,\,\vtan\, \cdot \w_T^2\,d S 
= \int_{\Gamma} {\mathbf f}^{2}_{T}\cdot \wtan \,d\widetilde{x} ,
\end{multline}
\begin{equation}\label{simplified limit problem 2}
\int_{\Omega_1}\grad\cdot\vone\, \varphi^1 
-\int_{\Gamma} \, \big(\,\vone\cdot\n\,\big)\, \varphi^{2} \,d\widetilde{x} 
+ \int_{\Gamma} \grad_{\!T}\cdot\,\vtan\, \varphi^{2} \,d\widetilde{x}
= \int_{\Omega_1} h^{1} \, \varphi^1 , 
\end{equation}
\begin{flushright}
for all $[\,\w,\varphi \,]\in \X^0\times \Y^0 $.
\end{flushright}
\end{subequations}
Due to the isomorphism between spaces as highlighted in Remark \ref{Rem Isomorphism with Lower Dimensional Spaces}, the result follows. 
\qed
\end{proof}
%
Now we outline an independent proof that Problem \eqref{reduced limit
problem mixed formulation} is well-posed.
\begin{theorem}\label{Th well-posedness of the reduced limiting problem}
The problem \eqref{reduced limit problem mixed formulation} is well-posed.
\end{theorem}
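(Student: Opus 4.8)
The plan is to verify the three hypotheses of Theorem~\ref{Th well posedeness mixed formulation classic} for the operators $\A$ and $\B$ defined in \eqref{Def reduced limit operators}, exactly as was done for the $\epsilon$-problem and for the limit problem on $\X^0$. Since Theorem~\ref{Th Lower Dimensional Limiting Problem} already exhibits an isomorphism $\iota: \X^0 \to \X^{00}$ and an isomorphism $\Y^0 \cong \Y^{00}$ under which Problem~\eqref{limit problem mixed formulation} transforms into Problem~\eqref{reduced limit problem mixed formulation}, one route is simply to invoke Theorem~\ref{Th well-posedness of the limiting problem}: well-posedness is preserved under isomorphisms of the Hilbert spaces, with the constant in \eqref{mix-est} changed only by the norms of $\iota$ and $\iota^{-1}$. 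I would state this as the quick argument. For completeness, however, I would also indicate the direct verification, which mirrors Lemmas~\ref{Th coercivity of A in the limit} and~\ref{Th B closed range in the limit}.

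For coercivity of $\A$ on $\X^{00}\cap\ker(\B)$: observe that $\A\v(\v) + \int_{\Omega_1}(\grad\cdot\v)^2$ dominates $\Vert\vone\Vert_{0,\Omega_1}^2 + \Vert\grad\cdot\vone\Vert_{0,\Omega_1}^2 + \Vert\vone\cdot\n\Vert_{0,\Gamma}^2 + \Vert\grad_{\!T}\vtan\Vert_{0,\Gamma}^2$ using Hypothesis~\ref{Hyp Bounds on the Coefficients} (ellipticity of $\Q$, so $(\E+\alpha)\ge\mu>0$ on $\Gamma$ and $\sqrt{\Q}\ge\sqrt{C_\Q}>0$), and that the Poincar\'e inequality on $H^1_0(\Gamma)$ controls $\Vert\vtan\Vert_{0,\Gamma}$ by $\Vert\grad_{\!T}\vtan\Vert_{0,\Gamma}$. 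This gives $\X^{00}$-coercivity of the augmented form; on $\ker(\B)$ the first row of \eqref{reduced limit operator B } forces $\grad\cdot\vone=0$ in $\Omega_1$, so the added term vanishes and $\A$ itself is coercive there. Non-negativity and symmetry of $\A$ are immediate from its block-diagonal quadratic form, and $\C=0$ is trivially non-negative and symmetric, so hypothesis (iii) holds.

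The inf-sup condition for $\B$ is the step requiring the most care, and it is exactly the content of Lemma~\ref{Th B closed range in the limit} transported through the isomorphism: given $\varphi=[\varphi^1,\varphi^2]\in\Y^{00}$, solve the auxiliary Neumann problem \eqref{Pblm Auxiliary Problem} for $\phi$ on $\Omega_1$ with datum $\varphi^2\in L^2(\Gamma)$ on $\Gamma$ and $\phi=0$ on $\partial\Omega_1-\Gamma$; set $\uone=-\grad\phi$, so $\div\uone=\varphi^1$ and $\uone\cdot\n=-\varphi^2$ on $\Gamma$ with the bound \eqref{Ineq Estimate on the Auxiliary function one}. Then the choice $\wtan=\0$ (or any $H^1_0(\Gamma)$ field, $\0$ suffices here) together with $\u=(\uone,\0)\in\X^{00}$ makes $\B\u(\varphi) = \int_{\Omega_1}\varphi^1\grad\cdot\uone - \int_\Gamma(\uone\cdot\n)\varphi^2 = \Vert\varphi^1\Vert_{0,\Omega_1}^2+\Vert\varphi^2\Vert_{0,\Gamma}^2$, while $\Vert\u\Vert_{\X^{00}}\le \widetilde C(\Vert\varphi^1\Vert_{0,\Omega_1}^2+\Vert\varphi^2\Vert_{0,\Gamma}^2)^{1/2}$, yielding $\sup_{\w}\B\w(\varphi)/\Vert\w\Vert_{\X^{00}}\ge \widetilde C^{-1}\Vert\varphi\Vert_{\Y^{00}}$ uniformly in $\varphi\in\Y^{00}$. (The mean-zero constraint defining $\Y^{00}$ is what guarantees the Neumann problem \eqref{Pblm Auxiliary Problem} is solvable; it is also why $\B$ maps into $(\Y^{00})'$.) The main obstacle, as flagged in Remark~\ref{Rem Regularity Demmand Discussion}, is precisely that one cannot use the classical Stokes inf-sup here because the normal trace on $\Gamma$ lives only in $L^2(\Gamma)$ rather than $H^{1/2}(\Gamma)$; the elliptic regularity of the auxiliary problem \eqref{Pblm Auxiliary Problem} is what supplies an $\Hdiv(\Omega_1)$-bounded lifting compatible with that weaker trace. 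With all three hypotheses verified, Theorem~\ref{Th well posedeness mixed formulation classic} delivers existence, uniqueness, and the a~priori estimate \eqref{mix-est}, completing the proof. \qed
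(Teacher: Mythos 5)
Your proof is correct and, in its detailed version, takes essentially the same route as the paper: verify ellipticity of $\A$ exactly as in Lemma~\ref{Th coercivity of A in the limit}, and establish the inf-sup condition for $\B$ by repeating Lemma~\ref{Th B closed range in the limit}, choosing $\u=(\uone,\0)$ with $\uone=-\grad\phi$ from the auxiliary problem \eqref{Pblm Auxiliary Problem} and noting (this is the paper's ``one extra observation'') that the $\X^{00}$-norm additionally contains $\Vert\uone\cdot\n\Vert_{0,\Gamma}^2=\Vert\varphi^2\Vert_{0,\Gamma}^2$, which stays controlled. Your preliminary ``quick argument'' --- transport well-posedness through the isomorphism $\iota:\X^0\to\X^{00}$ of Remark~\ref{Rem Isomorphism with Lower Dimensional Spaces} and the corresponding isomorphism on the pressure spaces --- is a valid and arguably cleaner alternative that the paper does not take, though it relies on the reader accepting that the bilinear forms of \eqref{limit problem mixed formulation} and \eqref{reduced limit problem mixed formulation} correspond under $\iota$, which is essentially the content of Theorem~\ref{Th Lower Dimensional Limiting Problem}.

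One side remark in your inf-sup discussion is off and worth flagging: you write that ``the mean-zero constraint defining $\Y^{00}$ is what guarantees the Neumann problem \eqref{Pblm Auxiliary Problem} is solvable.'' That problem has Dirichlet data $\phi=0$ on $\partial\Omega_1-\Gamma$ and Neumann data on $\Gamma$ only, so it is a \emph{mixed} boundary-value problem and is solvable for arbitrary $(\varphi^1,\varphi^2)$ without any compatibility condition. The mean-zero constraint in \eqref{reduced space of the limit pressure} is not what makes the lifting exist. This does not affect the correctness of your estimate or the conclusion, but the justification offered for it is spurious.
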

\begin{proof}
First, showing that the form $\v\mapsto \A(\v) \v$ is $\X^{00}$-elliptic, is identical to the proof of Lemma \ref{Th coercivity of A in the limit}. Next, proving that $\B$ has closed range follows exactly as the proof of Lemma \ref{Th B closed range in the limit} with only one extra observation. Notice that defining $\uone\defining -\grad \phi$, where $\phi$ is the solution of the auxiliary Problem \eqref{Pblm Auxiliary Problem}, satisfies $\u\cdot \n \big\vert_{\Gamma}= \varphi^{2}\in L^{2}(\Gamma)$ then, recalling the Estimate \ref{Ineq Estimate on the Auxiliary function one} we get
\begin{equation*}
\big\Vert \uone \big\Vert_{\Hdiv(\Omega_{1})}^{2}
+ \big\Vert \uone \cdot \n\big\Vert_{0, \Gamma}^{2}
\leq 
C_{1}^{2}\, \big(\Vert \varphi^{1} \Vert_{0,\Omega_{1}}^{2} 
+ \Vert \varphi^{2} \Vert_{0, \Omega_{2}}^{2} \big)
+ \Vert \varphi^{2} \Vert_{0, \Omega_{2}}^{2} 
\leq C \big(\Vert \varphi^{1} \Vert_{0,\Omega_{1}}^{2} 
+ \Vert \varphi^{2} \Vert_{0, \Omega_{2}}^{2} \big) .
\end{equation*}
From here, it follows trivially that the function
$\u =\big(\u^1, \0_{T}\big)$ satisfies an estimate of the type \eqref{Ineq Estimate on the whole Auxiliary function} as well as a chain of inequalities analogous to \eqref{Ineq Chain of Inequalities for Closed Range}. Therefore, the operators $\A$ and $\B$ satisfy the hypotheses of Theorem \ref{Th well posedeness mixed formulation classic} and the Problem \eqref{reduced limit problem mixed formulation} is well-posed.
\qed
\end{proof}

We close this section with the strong form of Problem \eqref{lower
dimensional limit problem}.
\begin{proposition}\label{Th Limit Problem Strong Form}
The problem \eqref{lower dimensional limit problem} is the weak
formulation of the boundary-value problem 
\begin{subequations}\label{Pblm limit strong problem fixed geometry}
\begin{equation}
\Q \,\vone +\grad p^{1}=0\,,
\end{equation}
\begin{equation}  
\grad\cdot\vone = h^{\,1}\quad\text{in}\quad\Omega_{1} ,
\end{equation}
\begin{equation}  
\grad_{T}\, \ptwo
+\beta\,\sqrt{\Q}\,\vtan-\big(\grad_{\,T}\big)'\E\,\grad_{\,T}\big(\vtan\big) =
\f^{2}_{T}\,,
\end{equation}
\begin{equation}  
\grad_{T}\cdot\vtan
- \vone\cdot\n= 0\,,
\end{equation}
\begin{equation}  
\pone-\ptwo
=(\,\E+\alpha\,)\,\vone\cdot\n\quad\text{in}\quad\Gamma ,
\end{equation}
\begin{equation}  
\pone=0\quad\text{on}\quad\partial\Omega_{1} -\Gamma ,
\end{equation}
\begin{equation}  
\vtan= \0\quad\text{on}\quad\partial\Gamma .
\end{equation}
\end{subequations}
\end{proposition}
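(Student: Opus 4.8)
\begin{sketch}
The plan is to run the standard two–stage argument that turns a mixed variational identity into a classical boundary–value problem: first insert test functions supported in the interior of $\Omega_{1}$ and of $\Gamma$ to recover the differential equations, and then integrate by parts against progressively more general elements of $\X^{00}\times\Y^{00}$ to isolate the condition on $\partial\Omega_{1}-\Gamma$, the homogeneous condition on $\partial\Gamma$, and the two coupling conditions on $\Gamma$. The reverse implication—that a sufficiently regular classical solution of \eqref{Pblm limit strong problem fixed geometry} satisfies \eqref{lower dimensional limit problem}—follows by undoing these integrations by parts, so the substance is the recovery just outlined.

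First I would take $\w=[\,\wone,\0\,]$ with $\wone\in\big(C_{0}^{\infty}(\Omega_{1})\big)^{N}$ in \eqref{lower dimensional limit problem 1}; every integral over $\Gamma$ drops out and what survives, $\int_{\Omega_{1}}\Q\,\vone\cdot\wone=\int_{\Omega_{1}}\pone\,\grad\cdot\wone$, is Darcy's law $\Q\,\vone+\grad\pone=\0$, which in turn upgrades $\pone$ to $H^{1}(\Omega_{1})$ since $\vone\in\mathbf{L}^{2}(\Omega_{1})$. Testing \eqref{lower dimensional limit problem 2} with $[\,\varphi^{1},\varphi^{2}\,]\in\Y^{00}$ and separating the part supported in $\Omega_{1}$ from the part on $\Gamma$ yields the continuity equations $\grad\cdot\vone=h^{1}$ in $\Omega_{1}$ and $\grad_{\!T}\cdot\vtan=\vone\cdot\n$ on $\Gamma$. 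For the tangential momentum balance I would take $\w=[\,\0,\wtan\,]$ with $\wtan\in\big(C_{0}^{\infty}(\Gamma)\big)^{N-1}$: the $\wone$–terms vanish, and integrating by parts on $\Gamma$ (no boundary contribution, by the support of $\wtan$) gives $\grad_{\!T}\,\ptwo+\beta\,\sqrt{\Q}\,\vtan-\big(\grad_{\!T}\big)'\,\E\,\grad_{\!T}\,\vtan=\f^{2}_{T}$ on $\Gamma$. The homogeneous condition $\vtan=\0$ on $\partial\Gamma$ requires no argument, being built into the space $\big(H_{0}^{1}(\Gamma)\big)^{2}$ in \eqref{reduced space of the limit velocity}.

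It remains to extract the two conditions on $\Gamma$ involving $\pone$. Using Darcy's law $\Q\,\vone=-\grad\pone$ together with the definition \eqref{Eq Normal Trace Definition} of the normal trace (applied with $\phi=\pone\in H^{1}(\Omega_{1})$), the first two volume integrals in \eqref{lower dimensional limit problem 1} combine into $-\langle\pone,\wone\cdot\n\rangle_{\partial\Omega_{1}}$, so that, with $\wtan=\0$, the statement becomes
\begin{equation*}
-\big\langle\pone,\wone\cdot\n\big\rangle_{\partial\Omega_{1}-\Gamma}
-\int_{\Gamma}\pone\,(\wone\cdot\n)
+(\E+\alpha)\int_{\Gamma}(\vone\cdot\n)(\wone\cdot\n)
+\int_{\Gamma}\ptwo\,(\wone\cdot\n)=0
\end{equation*}
for every $\wone\in\Hdiv(\Omega_{1})$ with $\wone\cdot\n\big\vert_{\Gamma}\in L^{2}(\Gamma)$. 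Choosing $\wone$ with vanishing normal trace on $\Gamma$ and arbitrary normal trace on $\partial\Omega_{1}-\Gamma$ forces $\pone=0$ on $\partial\Omega_{1}-\Gamma$ (the drained condition); then, invoking Lemma \ref{Th Surjectiveness from Hdiv to H^1/2} to realize an arbitrary element of $L^{2}(\Gamma)$ as $\wone\cdot\n\big\vert_{\Gamma}$ (extended by zero on $\partial\Omega_{1}-\Gamma$), the surviving interface integral gives $\pone-\ptwo=(\E+\alpha)\,\vone\cdot\n$ on $\Gamma$. I expect this last step to be the main obstacle: the normal trace $\wone\cdot\n$ is only an element of $H^{-1/2}(\partial\Omega_{1}-\Gamma)$ away from the interface while being square–integrable on $\Gamma$, so the duality pairing must be split and treated on each piece separately, and the surjectivity of the normal–trace operator is precisely what supplies enough admissible variations on $\Gamma$. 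A secondary point to track throughout is the zero–mean constraint of $\Y^{00}$, which must be respected when choosing pressure test functions so that the relations obtained are the genuine pointwise identities of \eqref{Pblm limit strong problem fixed geometry}.
\end{sketch}
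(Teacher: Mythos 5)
Your sketch is correct and is exactly the standard recovery the paper invokes in a single sentence — compactly supported test functions for the interior equations, integration by parts and the splitting of the normal-trace duality pairing (using Lemma \ref{Th Surjectiveness from Hdiv to H^1/2} to supply enough $L^{2}(\Gamma)$ variations) to extract the drained and interface conditions, and the essential condition on $\partial\Gamma$ read off from $\X^{00}$. Your caution about the zero-mean constraint in $\Y^{00}$ is apt, though note that constraint is in tension with the paper's own Remark \ref{Rem Isomorphism with Lower Dimensional Spaces} (which asserts $\Y^{00}\cong\Y^{0}$, an unconstrained space), so the simpler reading is to test with each pressure component independently, exactly as you describe.
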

%
The problem \eqref{Pblm limit strong problem fixed geometry} is
obtained using the standard decomposition of weak deriatives in
Problem \eqref{lower dimensional limit problem} to get the
differential equations in the interior and then the boundary and
interface conditions.
%
%
\section{Strong Convergence of the Solutions}  \label{strong}
%
%
In this section we show the strong convergence of the velocities and pressures to that of the limiting Problem \eqref{formulation limit problem}. The strategy is the standard approach in Hilbert spaces: given that the weak convergence of the solutions $\big[\veps, \peps\big]\xrightarrow[\epsilon \, \rightarrow \, 0]{} [\v, p] $ holds, it is enough to show the convergence of the norms in order to conclude strong convergence statements. Before showing these results a further hypothesis needs to be accepted.
\begin{hypothesis}\label{Hyp Strong Convergence on the Forcing Terms}
In the following, it will be assumed that the sequence of forcing terms $\{\f^{2,\epsilon}: \epsilon > 0\}\subseteq \mathbf{L}^{2}(\Omega_{2})$ and $\{h^{1,\epsilon}: \epsilon > 0\}\subseteq L^{2}(\Omega_{1})$ are strongly convergent i.e., there exist $\f^{2} \in \mathbf{L}^{2}(\Omega_{2})$ and $h^{1}\in L^{2}(\Omega_{1})$ such that
\begin{align}\label{Ineq Strong Convergence on the Forcing Terms}
& \big\Vert \f^{2, \epsilon} - \f^{2}\big\Vert_{0, \Omega_{2} }
\xrightarrow[\epsilon \,\rightarrow \, 0]{}   0,&
& \big\Vert h^{1, \epsilon} - h^{1}\big\Vert_{0, \Omega_{1} } 
\xrightarrow[\epsilon \,\rightarrow \, 0]{} 0\, .
\end{align}
\end{hypothesis}
\begin{theorem}\label{Th Strong Convergence of Velocities}
Let $\big\{\big[\veps, \peps \big]: \epsilon > 0 \big\}\subseteq \X\times \Y$ be the sequence of solutions to the family of Problems \eqref{problem fixed geometry} and let $[\v, p]\in \X^0\times \Y^0$, with $\vtwo = [\vtan, \xi]$, be the solution of Problem \eqref{formulation limit problem}, then 
\begin{subequations}\label{Stmt strong convergence velocities}
\begin{align} \label{strong convergence v tangential}
& 
\big\Vert \vtaneps -\vtan\big\Vert_{0,\Omega_2}\rightarrow 0 ,&
& \big\Vert \grad_{\!T}\,\vtaneps -\grad_{\!T}\,\vtan\big\Vert_{0,\Omega_2}  \rightarrow 0 .
\end{align}
\begin{equation} \label{strong convergence v normal}
\big\Vert \vnormeps-\vnorm\big\Vert_{\Hpartial}\rightarrow 0 .
\end{equation}
%
%
%
%
\begin{equation} \label{strong convergence v omega 1}
\big\Vert \vepsone-\vone\big\Vert_{\Hdiv(\Omega_1)}\,\rightarrow 0 .
\end{equation}
\end{subequations}
\end{theorem}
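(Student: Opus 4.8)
The plan is to use the standard Hilbert-space device: since $[\veps,\peps]\to[\v,p]$ weakly by Corollary~\ref{Th Direct Weak Convergence of Velocities} and Lemma~\ref{Th Convergence of Pressure One}, it suffices to prove convergence of the relevant norms. First I would take $\w=\veps$ in \eqref{problem fixed geometry 1} and $\varphi=\peps$ in \eqref{problem fixed geometry 2} and add them; the pressure cross terms cancel exactly as in \eqref{Eq Evaluation on the Diagonal}, leaving the energy identity $E_\epsilon=\epsilon\int_{\Omega_2}\f^{2,\epsilon}\cdot\vepstwo+\int_{\Omega_1}h^{1,\epsilon}\,\pepsone$, where $E_\epsilon\ge 0$ is the seven-term sum on the left of \eqref{Eq Evaluation on the Diagonal} (it contains the term $\E\,\|\partial_z\vtaneps\|_{0,\Omega_2}^2$ together with the scaled terms $\|\grad_{\!T}(\epsilon\vtaneps)\|_{0,\Omega_2}^2$, $\epsilon^{2}\|\grad_{\!T}\vnormeps\|_{0,\Omega_2}^2$ and $\|\epsilon\vtaneps\|_{0,\Gamma}^2$). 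Using \eqref{funcL2weak} and \eqref{convergence of normal velocity}, so that $\epsilon\,\vepstwo\to[\vtan,\0]$ strongly in $\mathbf{L}^{2}(\Omega_{2})$, together with Hypothesis~\ref{Hyp Strong Convergence on the Forcing Terms} and the strong $L^{2}(\Omega_1)$-convergence $\pepsone\to\pone$ of \eqref{convergence of the pressure in Omega_1}, the right-hand side converges to $\int_{\Omega_2}{\mathbf f}^{2}_{T}\cdot\vtan+\int_{\Omega_1}h^{1}\,\pone$. Performing the identical diagonal test on the limit problem \eqref{formulation limit problem} produces $E_0=\int_{\Omega_2}{\mathbf f}^{2}_{T}\cdot\vtan+\int_{\Omega_1}h^{1}\,\pone$, where $E_0\ge 0$ is the five-term sum $\int_{\Omega_1}\Q\,\vone\cdot\vone+\E\,\|\grad_{\!T}\vtan\|_{0,\Omega_2}^2+\E\,\|\partial_z\xi\|_{0,\Omega_2}^2+\alpha\,\|\vone\cdot\n\|_{0,\Gamma}^2+\int_\Gamma\beta\sqrt{\Q}\,\vtan\cdot\vtan$; hence $E_\epsilon\to E_0$.

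Next I would compare the two energies term by term. Every summand of $E_\epsilon$ is nonnegative, and each weak convergence in Corollary~\ref{Th Direct Weak Convergence of Velocities} — using also that $\vepsone\cdot\n=\vnormeps\big\vert_\Gamma$ is bounded in $L^{2}(\Gamma)$ by \eqref{general a priori estimate} and the admissibility constraint, hence converges weakly in $L^{2}(\Gamma)$ to $\vone\cdot\n$ — makes its $\liminf$ at least the corresponding summand of $E_0$ (the matching value being $0$ for the two terms $\E\,\|\partial_z\vtaneps\|_{0,\Omega_2}^2$ and $\epsilon^{2}\E\,\|\grad_{\!T}\vnormeps\|_{0,\Omega_2}^2$, which have no counterpart in $E_0$ since the limit problem is tested only against $z$-independent $\wtan$). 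By \eqref{Eq epsilon partial tangential L2 weak} the $\liminf$ of $\E\,\|\partial_z\vtaneps\|_{0,\Omega_2}^2$ is in fact at least $\E\,\|\eta\|_{0,\Omega_2}^2$, with $\eta$ the weak $L^{2}(\Omega_2)$-limit of $\partial_z\vtaneps$; summing these bounds gives $E_0=\lim E_\epsilon\ge E_0+\E\,\|\eta\|_{0,\Omega_2}^2$, so $\eta=\0$. Since the lower bounds now add up exactly to $\lim E_\epsilon$, each summand converges to its lower bound, by the sandwich $\limsup a^\epsilon_i\le\lim E_\epsilon-\sum_{j\neq i}\liminf a^\epsilon_j\le b_i\le\liminf a^\epsilon_i$. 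In particular $\int_{\Omega_1}\Q\,\vepsone\cdot\vepsone\to\int_{\Omega_1}\Q\,\vone\cdot\vone$, $\|\grad_{\!T}(\epsilon\vtaneps)\|_{0,\Omega_2}\to\|\grad_{\!T}\vtan\|_{0,\Omega_2}$, $\|\partial_z\vtaneps\|_{0,\Omega_2}\to 0$, and $\|\partial_z\vnormeps\|_{0,\Omega_2}\to\|\partial_z\xi\|_{0,\Omega_2}$.

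Finally I would upgrade these to strong convergence. For $\vepsone$: the form $\int_{\Omega_1}\Q\,\cdot\,\cdot$ induces a norm on $\mathbf{L}^{2}(\Omega_1)$ equivalent to the usual one (Hypothesis~\ref{Hyp Bounds on the Coefficients}), so weak convergence plus convergence of that norm gives $\vepsone\to\vone$ strongly in $\mathbf{L}^{2}(\Omega_1)$; moreover $\Div\vepsone=h^{1,\epsilon}\to h^{1}=\Div\vone$ strongly in $L^{2}(\Omega_1)$ by \eqref{mass conservation Omega 1} and Hypothesis~\ref{Hyp Strong Convergence on the Forcing Terms}, whence $\vepsone\to\vone$ strongly in $\Hdiv(\Omega_1)$, i.e. \eqref{strong convergence v omega 1}. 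For the scaled tangential velocity $\epsilon\vtaneps$: it converges to $\vtan$ weakly in $\mathbf{H}^{1}(\Omega_2)$ and strongly in $\mathbf{L}^{2}(\Omega_2)$ by \eqref{funcL2weak}, while $\grad_{\!T}(\epsilon\vtaneps)\to\grad_{\!T}\vtan$ strongly (weak plus norm convergence) and $\partial_z(\epsilon\vtaneps)=\epsilon\,\partial_z\vtaneps\to\0=\partial_z\vtan$ strongly by \eqref{Eq epsilon partial tangential L2 weak} and \eqref{solutionconvergence}; hence $\epsilon\vtaneps\to\vtan$ strongly in $\mathbf{H}^{1}(\Omega_2)$, i.e. \eqref{strong convergence v tangential}. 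For the normal velocity: the boundary condition $\vnormeps(\widetilde{x},1)=0$ of \eqref{boundary conditio Omega 2} gives $\vnormeps\big\vert_\Gamma=-\int_0^1\partial_z\vnormeps(\widetilde{x},t)\,dt$, so the strong $L^{2}(\Omega_2)$-convergence $\partial_z\vnormeps\to\partial_z\xi$ forces $\vnormeps\big\vert_\Gamma\to\xi\big\vert_\Gamma$ strongly in $L^{2}(\Gamma)$; applying the Poincar\'e inequality \eqref{Ineq Conrol by Trace on One Derivative Space} to $\vnormeps-\xi$ then yields $\vnormeps\to\xi$ strongly in $L^{2}(\Omega_2)$, and with $\partial_z\vnormeps\to\partial_z\xi$ this is \eqref{strong convergence v normal}. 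The main obstacle is the middle step: isolating the orphan term $\E\,\|\partial_z\vtaneps\|_{0,\Omega_2}^2$ in $E_\epsilon$, using convergence of the total energy both to annihilate it ($\eta=\0$) and to split the limit into the individual term limits, together with the mildly delicate point of controlling the interface trace of $\vnormeps$ through $\partial_z\vnormeps$ rather than through the possibly vanishing coefficient $\alpha$.
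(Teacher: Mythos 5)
Your proposal is correct and follows essentially the same route as the paper's proof: diagonal testing to obtain the energy identity, passing to the limit on the right-hand side using the strong convergences of the forcing data and of $\pepsone$, matching against the diagonal of the limit problem, invoking weak lower semicontinuity of each quadratic term, and then upgrading weak convergence to strong convergence via convergence of norms (with the $\Hdiv$-convergence of $\vepsone$ closed by the mass-conservation equation). Your version is somewhat more explicit than the paper's in two minor respects — you spell out the term-by-term sandwich argument (the paper packages this as convergence of the equivalent $\X_2^0$-norm of $[\epsilon\vtaneps,\vnormeps]$), and you note that the interface trace of $\vnormeps$ must be controlled through $\partial_z\vnormeps$ and the boundary condition at $z=1$ rather than through the possibly vanishing coefficient $\alpha$, a point the paper absorbs silently into the claimed equivalence of norms — but these are differences of presentation, not of method.
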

\begin{proof}
In order to prove the convergence of norms, a new norm on the space $\X_2^0$, defined in \eqref{space of the limit velocity omega 2}, must be introduced
\begin{multline}\label{adequated norm}
\w\mapsto \Big\{\,
\Vert \sqrt{\E}\,\grad_{\!T}\,(\,\wtan) \Vert_{0,\Omega_2}^{2}
+\Vert \sqrt{\E}\,\partial_z\,\wtan \Vert_{0,\Omega_2}^{2}
+\Vert \sqrt{\E}\,\partial_z\,\wnorm \Vert_{0,\Omega_2}^{2} 
+\Vert \sqrt{\alpha}\,\wnorm \Vert_{0,\Gamma}^{2}
+\Vert \sqrt{\beta}\,\sqrt[4]{\Q}\,\wtan\, \Vert_{0,\Gamma}^{2}\,\Big\}^{1/2}\\
\defining \big\Vert \w\big\Vert_{\,\X_2^0} \, .
\end{multline}
Clearly, this norm is equivalent to the $\Vert \cdot \Vert_{\X_2^0}$-norm defined in \eqref{norm space of the limit velocity omega 2}. Now consider
\begin{multline} \label{epsilon diagonal}
\limsup_{\,\epsilon\,\downarrow\,0}\,
\Big\{ \Vert \sqrt{\Q}\,\vepsone \Vert_{0,\Omega_{1}}^{2} 
+ \big\Vert \big[\,\vtan,\,\xi\,\big]\big\Vert_{\X_2^0}^{2}
\Big\} \\
\leq 
\limsup_{\,\epsilon\,\downarrow\,0}\,
\Big\{ \Vert \sqrt{\Q}\,\vepsone \Vert_{0,\Omega_{1}}^{2}
%
+\Vert \sqrt{\E}\,\grad_{\!T}\,\big(\epsilon\,\vtaneps\,\big) \Vert_{0,\Omega_2}^{2}
%
+\Vert \sqrt{\E}\,\partial_z\,\vnormeps \Vert_{0,\Omega_2}^{2}
%
+\Vert \sqrt{\E}\,\big(\epsilon\,\grad_{\!T}\,\vnormeps
\big) \Vert_{0,\Omega_2}^{2} \\
+\Vert \sqrt{\E}\,\big(\partial_z\,\vtaneps\,\big) \Vert_{0,\Omega_2}^{2} 
+\Vert \sqrt{\alpha}\,\vepsone\cdot\n \Vert_{0,\Gamma}^{2}
+\Vert \sqrt{\beta}\,\sqrt[4]{\Q}\,\big(\,\epsilon\,\vtaneps\,\big) \Vert_{0,\Gamma}^{2}\,\Big\}
%
 \leq \, \int_{\Omega_2} {\mathbf f^{2}} \cdot \vtwo 
+\int_{\Omega_1} h^{1}  \, \pone . 
\end{multline}
On the other hand, testing the Equations \eqref{formulation limit problem} on the solution $[\v, p]$ and adding them together, gives
\begin{multline} \label{diagonal}
\Vert \sqrt{\Q}\,\vone \Vert_{0,\Omega_{1}}^{2}
+\Vert \sqrt{\E}\,\grad_{\!T}\,\vtan \Vert_{0,\Omega_2}^{2}
+\Vert \sqrt{\E}\,\partial_z\,\xi \Vert_{0,\Omega_2}^{2}\\
+\Vert \sqrt{\alpha}\,\vone\cdot\n \Vert_{0,\Gamma}^{2}
+\Vert \sqrt{\beta}\,\sqrt[4]{\Q}\,\vtan\, \Vert_{0,\Gamma}^{2} 
= \, \int_{\Omega_2} {\mathbf f^{2}} \cdot
\vtwo \,d\widetilde{x} \,dz +\int_{\Omega_1} h^{1} \,
\pone \, dx .
\end{multline}
Comparing the left hand side of \eqref{epsilon diagonal} and \eqref{diagonal} we conclude one inequality. 

Next, due to the weak convergence of the sequence $\{\,[\,\epsilon\,\vtaneps, \vnormeps]: \epsilon > 0\,\}\subseteq \X_2^0$, it must hold that
\begin{equation} \label{lower inequality, velocity 2}
\begin{split}
\big\Vert \big[\,\vtan,\,\xi\,\big]\big\Vert_{\X_2^0}^{2}
\leq 
\liminf_{\,\epsilon\,\downarrow\,0} 
\big\Vert  \,\big[\,\epsilon\,\vtaneps, \vnormeps\big]\big\Vert_{\X_2^0}^{2} 
=\liminf_{\,\epsilon\,\downarrow\,0}\,& \Big\{   \Vert  \,\sqrt{\E}\,\grad_{\!T}\,(\,\epsilon\,\vtaneps) \Vert_{0,\Omega_2}^{2}
+\Vert \sqrt{\E}\,\partial_z\,\vtaneps \Vert_{0,\Omega_2}^{2} \\
& +\Vert \sqrt{\E}\,\partial_z\,\vnormeps \Vert_{0,\Omega_2}^{2}
+\Vert \sqrt{\alpha}\,\vnormeps \Vert_{0,\Gamma}^{2}
+\Vert \sqrt{\beta}\,\sqrt[4]{\Q}\,\vtaneps\, \Vert_{0,\Gamma}^{2}\,\Big\} .
\end{split}
\end{equation}
In addition, due to the weak convergence discussed in Corollary
\ref{Th Weak convergence of the whole sequence}, in particular it
holds that
\begin{equation} \label{lower inequality, velocity 1}
\big\Vert \sqrt{\Q} \,  \vone\big\Vert_{0,\Omega_{1}}^{2}\leq \liminf_{\,\epsilon\,\downarrow\,0}
\big\Vert \sqrt{\Q} \,  \vepsone\big\Vert_{0,\Omega_{1}}^{2} .
\end{equation}
Putting together \eqref{lower inequality, velocity 2}, \eqref{lower
inequality, velocity 1}, \eqref{epsilon diagonal} and \eqref{diagonal}
we conclude that the norms are convergent, i.e.
\begin{equation} \label{norms limit}
\big\Vert (\,\vone, [\,\vtan,\,\xi\,]\,)\big\Vert_{\,\mathbf{L}^{2}(\Omega_1)\times\X_2^0}^{2}
= \lim_{\,\epsilon\,\downarrow\,0}\big\Vert (\,\vepsone, [\,\epsilon\,\vtaneps,\,\vnormeps\,]\,)\big\Vert_{\,\mathbf{L}^{2}(\Omega_1)\times\X_2^0}^{2} \, .
\end{equation}
%
%
%
Since the norm $\w\mapsto \Vert\sqrt{\Q} \, \w\Vert_{0, \Omega_{1}}$
is equivalent to the standard $\mathbf{L}^{2}(\Omega_{1})$-norm, we
conclude the strong convergence of the sequence $\big\{\big(\vepsone,
\big[\epsilon\,\vtaneps,\,\vnormeps\big]\big):\epsilon>0\big\}$ to
$[\v, p]$ as elements of $ \mathbf{L}^{2}(\Omega_{1})\times\X_2^0$. In
particular, the Statements \eqref{strong convergence v tangential} and
\eqref{strong convergence v normal} follow. Finally, recalling the
Equality \eqref{mass conservation Omega 1} and the strong convergence
of the forcing terms $\{h^{1, \epsilon}: \epsilon> 0 \}$, the
Statement \eqref{strong convergence v omega 1} follows and the proof
is complete.
\qed
\end{proof}
\begin{remark}\label{Rem Strong Convergence of the Vanishing Velocities}
Notice that \eqref{norms limit} together with \eqref{epsilon diagonal} imply
\begin{equation}\label{twisted term}
c\,\lim_{\,\epsilon\,\downarrow\,0}\,
\Big\{\big\Vert\grad_{\!T}\big(\,\epsilon\,\vnormeps\big)\big\Vert_{0,\Omega_2}^{2}
+\big\Vert\partial_z\,\vtaneps\big\Vert_{0,\Omega_2}^{2} \Big\}
%
\leq\lim_{\,\epsilon\,\downarrow\,0}\,
\Big\{\big\Vert\E\,\grad_{\!T}\big(\,\epsilon\,\vnormeps\big)\big\Vert_{0,\Omega_2}^{2}
+\big\Vert \E\,\partial_z\,\vtaneps\big\Vert_{0,\Omega_2}^{2}\Big\}=0 ,
\end{equation}
where $c>0$ is an ellipticity constant coming from $\E$.
\end{remark}

Next, we show the strong convergence of the pressures.
\begin{theorem}\label{Th Strong Convergence of the Pressures}
Let $\{\big[\veps, \peps \big]: \epsilon > 0 \}\subseteq \X\times \Y$
be the sequence of solutions to the family of Problems \eqref{problem
fixed geometry} and let $[\v, p]\in \X^0\times \Y^0$, be the solution
of Problem \eqref{formulation limit problem}, then
\begin{subequations}\label{Eq Strong Convergence of Pressures}
\begin{equation}\label{strong convergence presssure omega 1}
\big\Vert\pepsone - \pone \big\Vert_{1,\Omega_{1}}\rightarrow 0 ,
\end{equation}
\begin{equation}\label{strong convergence of the epsilon pressure in L2}
\big\Vert \pepstwo - \ptwo \big\Vert_{0,\Omega_2}
\rightarrow 0 .
\end{equation}
\end{subequations}
\end{theorem}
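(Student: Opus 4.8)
The plan is to derive each convergence from the corresponding weak convergence established in Lemma~\ref{Th Convergence of Pressure One}, upgraded by means of the strong convergence of the velocities (Theorem~\ref{Th Strong Convergence of Velocities}) and of the data (Hypothesis~\ref{Hyp Strong Convergence on the Forcing Terms}). The statement \eqref{strong convergence presssure omega 1} is the easy half: Darcy's law \eqref{Darcy's law Strong problem} gives $\grad\,\pepsone=-\Q\,\vepsone$ and $\grad\,\pone=-\Q\,\vone$, so \eqref{strong convergence v omega 1} and the boundedness of $\Q$ yield $\grad\,\pepsone\to\grad\,\pone$ strongly in $\mathbf{L}^2(\Omega_1)$; since $\pepsone-\pone$ vanishes on $\partial\Omega_1-\Gamma$ by \eqref{boundary condition Omega 1}, Poincar\'e's inequality gives $\|\pepsone-\pone\|_{1,\Omega_1}\le C\,\|\grad(\pepsone-\pone)\|_{0,\Omega_1}\to 0$.

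For \eqref{strong convergence of the epsilon pressure in L2} I would use the $L^2(\Omega_2)$-orthogonal splitting $\pepstwo=\overline{p}^{\,2,\epsilon}+\widetilde{p}^{\,2,\epsilon}$ into the fibre-average $\overline{p}^{\,2,\epsilon}(\xthilde):=\int_0^1\pepstwo(\xthilde,z)\,dz$ and the zero-fibre-average remainder $\widetilde{p}^{\,2,\epsilon}$; since $\ptwo=\ptwo(\xthilde)$ by \eqref{second identification of xi}, this gives $\|\pepstwo-\ptwo\|_{0,\Omega_2}^{2}=\|\overline{p}^{\,2,\epsilon}-\ptwo\|_{0,\Omega_2}^{2}+\|\widetilde{p}^{\,2,\epsilon}\|_{0,\Omega_2}^{2}$, so it suffices to kill each summand. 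For the average, I would subtract the limiting identity \eqref{formulation limit problem 1} from the $\epsilon$-identity \eqref{rearanged problem 1} tested against the same $\w$ in the dense subspace $\W$ of $\X^0$ provided by Lemma~\ref{Th Density Result} — on which $\partial_z\wtan=0$, cancelling the singular factor $1/\epsilon^{2}$: every velocity and source contribution converges to its limit by Theorem~\ref{Th Strong Convergence of Velocities} and Hypothesis~\ref{Hyp Strong Convergence on the Forcing Terms}, while the $\epsilon$-singular terms carrying $\grad_{\!T}(\epsilon\,\vnormeps)$ and $\partial_z\vtaneps$ are controlled by Remark~\ref{Rem Strong Convergence of the Vanishing Velocities} through \eqref{twisted term}, so the residual pressure pairing is bounded by $\delta_\epsilon\|\w\|_{\X^0}$ with $\delta_\epsilon\to 0$. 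Extending by density to all of $\X^0$ and invoking the closed-range (inf-sup) estimate for the limiting operator $B$ from Lemma~\ref{Th B closed range in the limit} — whose constant is independent of $\epsilon$, and for which the pairing against $\X^0$-test functions only detects $\overline{p}^{\,2,\epsilon}$ — yields $\|\pepsone-\pone\|_{0,\Omega_1}+\|\overline{p}^{\,2,\epsilon}-\ptwo\|_{0,\Omega_2}\to 0$.

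For the remainder I would use the normal momentum balance \eqref{normal momentum conservation law}, namely $\partial_z\pepstwo=\partial_z(\E\,\partial_z\vnormeps)+\epsilon^{2}\,\grad_{\!T}\cdot(\E\,\grad_{\!T}\vnormeps)+\epsilon\,\f^{2,\epsilon}_{N}$ in $\mathcal{D}'(\Omega_2)$. Since $\partial_z\vnormeps\to\partial_z\xi$ strongly in $L^2(\Omega_2)$ by \eqref{strong convergence v normal} and $\partial_z\xi$ is $z$-independent, the first term tends to $\partial_z(\E\,\partial_z\xi)=0$; the last term is $O(\epsilon)$; and the middle term equals $\epsilon\,\grad_{\!T}\cdot(\E\,\grad_{\!T}(\epsilon\,\vnormeps))$ with $\grad_{\!T}(\epsilon\,\vnormeps)\to 0$ strongly in $L^2(\Omega_2)$ by \eqref{twisted term}. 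Measuring all three terms in the negative norm of $L^2\!\big(\Gamma;H^{-1}(0,1)\big)$ and applying a one-dimensional Poincar\'e-type inequality in $z$ — bounding a fibrewise zero-mean function by a negative norm of its $z$-derivative — to $\pepstwo$ gives $\|\widetilde{p}^{\,2,\epsilon}\|_{0,\Omega_2}\to 0$, and together with the average estimate this proves \eqref{strong convergence of the epsilon pressure in L2}.

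The step I expect to be the main obstacle is the control of the fibre-oscillating part $\widetilde{p}^{\,2,\epsilon}$. The inf-sup machinery only reaches the fibre-averaged pressure, and every test function admissible in the $\epsilon$-problem carries a full $\mathbf{H}^1(\Omega_2)$ norm, so probing the $z$-oscillation directly forces tangential derivatives of the test function — hence, through the tangential momentum balance, the tangential gradient of the $\Omega_2$-pressure — into the estimates. Making the negative-norm bound on $\epsilon^{2}\,\grad_{\!T}\cdot(\E\,\grad_{\!T}\vnormeps)$ genuinely effective, so that the Poincar\'e-type inequality returns a vanishing rather than a merely bounded quantity, is the delicate point, and it is exactly here that the strong — not merely uniformly bounded — vanishing of the twisted terms furnished by Remark~\ref{Rem Strong Convergence of the Vanishing Velocities} is indispensable.
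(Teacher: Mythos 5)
Your treatment of $\pepsone$ is the same as the paper's and is correct. For $\pepstwo$ you take a genuinely different route: the paper instead proves convergence of the $L^2(\Omega_2)$-norm, $\|\pepstwo\|_{0,\Omega_2}\to\|\ptwo\|_{0,\Omega_2}$, by pairing the $\epsilon$-problem against a test function $\big(\w^1_\epsilon,(\0_{T},\varsigma_\epsilon)\big)\in\X$ whose normal component $\varsigma_\epsilon(\xthilde,z)=\int_z^1\phi_\epsilon\,dt$ is manufactured from an $L^2$-approximation $\phi_\epsilon$ of $\pepstwo$ (cf.\ \eqref{negative antiderivative} and the construction in Lemma~\ref{Th Convergence of Pressure One}); then the weak convergence of Corollary~\ref{Th Weak convergence of the whole sequence} together with norm convergence gives strong convergence by the usual Hilbert-space argument, with no decomposition into fibre average and remainder and no negative-norm inequality.

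Your plan, however, has two gaps. For the fibre average, the claimed bound $\delta_\epsilon\|\w\|_{\X^0}$ on the residual pressure pairing does not follow: after subtracting the two identities, the term $\epsilon\int_{\Omega_2}\E\,\grad_{\!T}(\epsilon\vnormeps)\cdot\grad_{\!T}\wnorm$ survives and is only controlled by $\epsilon\,\mu\,\|\grad_{\!T}(\epsilon\vnormeps)\|_{0,\Omega_2}\,\|\grad_{\!T}\wnorm\|_{0,\Omega_2}$, and $\|\grad_{\!T}\wnorm\|_{0,\Omega_2}$ is not dominated by $\|\w\|_{\X^0}$ since $\X^0$ only controls $\|\wnorm\|_{\Hpartial}$. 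You therefore cannot pass by density to all of $\X^0$ and then invoke the inf-sup from Lemma~\ref{Th B closed range in the limit}: the near-optimal test functions produced there have $\unorm(\xthilde,z)=-\varphi^2(\xthilde)(1-z)$ with $\varphi^2$ only in $L^2(\Gamma)$, hence no square-integrable tangential gradient at all, so they are not admissible in the $\epsilon$-problem and the subtraction cannot be performed against them. For the fibre-oscillating remainder, the negative-norm step fails at the term $\epsilon^{2}\grad_{\!T}\cdot\big(\E\grad_{\!T}\vnormeps\big)$: this is a distributional derivative in $\xthilde$, not in $z$, so it is not controlled (nor even well-defined) in $L^2\big(\Gamma;H^{-1}(0,1)\big)$; taking a negative norm in the $z$-direction cannot absorb a tangential divergence. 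Measuring it where it naturally lives, negative in $\xthilde$, then prevents the fibrewise Poincar\'e inequality from returning the $L^2(\Omega_2)$ smallness of $\widetilde{p}^{\,2,\epsilon}$. You correctly flag this as the delicate point, but the proposal as written does not supply the mechanism that would close it.
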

\begin{proof}
For the Statement \eqref{strong convergence presssure omega 1} first
observe that \eqref{strong convergence v omega 1} together with
\eqref{Darcy's law Strong problem} imply $\big\Vert \grad\pepsone -
\grad \pone\big\Vert_{0,\Omega_{1}}\rightarrow 0$. Again, since
$\pepsone =0$ on $\partial \Omega_{1} - \Gamma$ then $\pone=0$ on
$\partial \Omega_{1} - \Gamma$ and, due to Poincar\'e's inequality,
the gradient controls the $H^{1}(\Omega_{1})$-norm, of $\pone$ and
$\pepsone$ for all $\epsilon > 0$. Consequently, the convergence
\eqref{strong convergence presssure omega 1} follows.

Proving the Statement \eqref{strong convergence of the epsilon
pressure in L2} is significantly more technical. We start taking a
previous localization step for the function $\pepstwo$. Let
$\phi_{\epsilon}\in C_{0}^{\infty}(\Omega_{2})$ be a function such
that
\begin{equation*} 
\big\Vert \pepstwo-\phi_{\epsilon} \big\Vert_{0,\Omega_2}<\epsilon .
\end{equation*}
Observe that
\begin{equation} \label{approximation of pressure norm by local functions}
\Big\vert\,\int_{\Omega_2}\pepstwo\,\pepstwo 
-
\int_{\Omega_2}\pepstwo\,\phi_{\epsilon} 
\Big\vert
=\Big\vert\int_{\Omega_2}\pepstwo\,\big(\,\pepstwo-\phi_{\epsilon}\big) 
\Big\vert
\leq \big\Vert \pepstwo \big\Vert_{0,\Omega_2}\,
\big\Vert \pepstwo-\phi_{\epsilon} \big\Vert_{0,\Omega_2}
< \widetilde{C}\,\epsilon ,
\end{equation}
where the last inequality in the expression above holds due to the
Statement \eqref{convergence of the pressure in Omega_2}. In addition
$\phi_{\epsilon}\rightarrow \ptwo$ weakly in $L^{2}(\Omega_{2})$
because, for any $w\in L^{2}(\Omega_{2})$ it holds that
\begin{equation*} 
\int_{\Omega_2}\phi_{\epsilon}\,w 
=\int_{\Omega_2}\big(\phi_{\epsilon}-\pepstwo\big)\,w 
+\int_{\Omega_2}\pepstwo\,w 
\\
\rightarrow 0
+\int_{\Omega_2}\ptwo\,w  .
\end{equation*}
In particular, taking $w=w(\widetilde{x})$ in the expression above, we
conclude that $\int_{0}^{1}\phi_{\epsilon}\,dz\rightarrow \ptwo$
weakly in $L^{2}(\Gamma)$.

Now, for $\phi_{\epsilon}$ define the function $\varsigma_{\,\epsilon}$ using the rule presented in the Identity
\eqref{negative antiderivative}, therefore
$\varsigma_{\,\epsilon} \,\vert_{\,\Gamma} = \varsigma_{\,\epsilon}(\widetilde{x},0)=\int_{0}^{1}\phi_{\epsilon}\,dz$ belongs to $L^{2}(\Gamma)$ and, by construction, $\varsigma_{\,\epsilon} \,\vert_{\,\Gamma}$ is bounded in $L^{2}(\Gamma)$. Then, due to Lemma \ref{Th Surjectiveness from Hdiv to H^1/2}, there must exist
$\wone_{\epsilon}\in\Hdiv(\Omega_{1})$ such that
$\wone_{\epsilon}\cdot\n=\int_{0}^{1}\phi_{\epsilon}\,dz$ on
$\Gamma$, $\wone_{\epsilon}\cdot\n = 0$ on $\partial\,\Omega_1-\Gamma$ and
$\big\Vert \wone_{\epsilon}\big\Vert_{\,\Hdiv(\Omega_1)}\leq C_{1}
\big\Vert \varsigma_{\,\epsilon}(\widetilde{x},0)\big\Vert_{\,\Gamma}<C$. Where $C_{1}$ depends only on the domain. Hence, the function $\w_{\epsilon}\defining [\,\wone_{\epsilon},\w^{2}_{\epsilon}\,]$,
with $\w^{2}_{\epsilon}\defining\big( \0_{\,T},\,\varsigma_{\,\epsilon}(\widetilde{x},z)\big)$, belongs to $\X$. Test, \eqref{problem fixed geometry 1} with $\w_{\epsilon}$ and get
\begin{multline}\label{relation pi, xi alpha p one on the diagonal}
\int_{\Omega_1} \Q\,\vepsone\cdot\wone_{\epsilon} 
- \int_{\Omega_1} \pepsone\,\grad\cdot\wone_{\epsilon} 
+\alpha\int_{\,\Gamma}\,\big(\vepsone\cdot\n\big)\big(\wone_{\epsilon}\cdot\n\big)\,d\widetilde{x}\\
+\int_{\Omega_2}
\pepstwo\,\phi_{\epsilon}(\widetilde{x},z)\,d\widetilde{x}\,dz
+\epsilon\int_{\Omega_2}\E\,\grad_{\,T}\big(\epsilon\,\vnormeps
\big)\cdot\,\grad_{\,T}\,\varsigma_{\,\epsilon}(\widetilde{x},z)\,d\widetilde{x}\,dz\\
-\int_{\Omega_2}\E\,\partial_z\,\vnormeps\,\phi_{\epsilon}(\widetilde{x},z)\,d\widetilde{x}\,dz
=\epsilon\,\int_{\Omega_2}\f^{\,2,\,\epsilon}_{N}\,\varsigma_{\,\epsilon}\,d\widetilde{x}\,dz .
\end{multline}
In the Identity \eqref{relation pi, xi alpha p one on the diagonal}
all the summands but the fourth, are known to be convergent due to the
previous strong convergence statements, therefore, this last summand
must converge too. The first two summands satisfy
\begin{equation*}
\int_{\Omega_1} \Q\,\vepsone\cdot\w^{1}_{\epsilon} 
- \int_{\Omega_1} \pepsone \grad\cdot\wone_{\epsilon} 
= -\int_{\,\Gamma}\pepsone \, \big(\,\wone_{\epsilon}\cdot\n\,\big)\,d\widetilde{x}
%
= -\int_{\,\Gamma}\pepsone \, 
\Big(\,\int_{0}^{1}\phi_{\epsilon}\,dz\,\Big)\,d\widetilde{x}
\rightarrow -\int_{\,\Gamma}\pone \, \ptwo\,d\widetilde{x} .
\end{equation*}
The limit above holds due to the strong convergence of the pressure in
$H^1(\Omega_{1})$ and the weak convergence of $\int_{[\,0,1
]}\phi_{\epsilon}\,dz$. The third summand in \eqref{relation pi, xi
alpha p one on the diagonal} behaves as
\begin{equation*}
\alpha\int_{\,\Gamma}\,\big(\,\vepsone\cdot\n\,\big)\,
\big(\,\wone_{\epsilon}\cdot\n\,\big)\,d\widetilde{x}
\\
=\int_{\Gamma}\alpha\,\big(\,\vepsone\cdot\n\,\big)\,\big(\,\int_{0}^{1}\phi_{\epsilon}\,dz\,\big)\,d\widetilde{x}
\rightarrow \int_{\Gamma}\alpha\,\big(\,\vone\cdot\n\,\big)\,\ptwo\,d\widetilde{x} ,
\end{equation*}
because of the Statement \eqref{strong convergence v normal}. Next,
the fifth summand in \eqref{relation pi, xi alpha p one on the
diagonal} vanishes due to the Estimates \eqref{twisted term} and the
sixth summand behaves in the following way
\begin{multline*}
-\int_{\Omega_2}\E\,\partial_z\,\vnormeps\,\phi_{\epsilon}(\widetilde{x},z)\,d\widetilde{x}\,dz\rightarrow
-\int_{\Omega_2}\E\,\partial_z\,\xi\,\big(\wklim_{\,\epsilon\,\downarrow\,0}\,\phi_{\epsilon}(\widetilde{x},z)\,\big)\,d\widetilde{x}\,dz\\
=-\int_{\Gamma}\E\,\partial_z\,\xi\,
\Big(\int_{\,0}^{1}\wklim_{\,\epsilon\,\downarrow\,0}\,\phi_{\epsilon}(\widetilde{x},z)\,dz\,\Big)
\,d\widetilde{x} =-\int_{\Gamma}\E\,\partial_z\,\xi\,\ptwo\,d\widetilde{x} .
\end{multline*}
The first equality above holds since $\partial_z\,\xi =
\partial_z\,\xi (\widetilde{x})$, while the second holds, because
$\int_{\,0}^{1}\wklim\limits_{\,\epsilon\,\downarrow\,0}\,\phi_{\epsilon}(\widetilde{x},z)\,dz=
\wklim\limits_{\,\epsilon\,\downarrow\,0}\,\int_{\,0}^{1}\phi_{\epsilon}(\widetilde{x},z)\,dz
= \ptwo$. Finally, the right hand side on \eqref{relation pi, xi alpha
p one on the diagonal} vanishes. Putting together all these
observations we conclude that
\begin{equation*}
\int_{\Omega_2}
\pepstwo\,\phi_{\epsilon}(\widetilde{x},z)\,d\widetilde{x}\,dz
\rightarrow \int_{\Omega_2}\big(\,\E\,\partial_z\,\xi - \alpha\,\vone\cdot\n\,\vert_{\,\Gamma}+\pone\,\vert_{\,\Gamma}\,\big)\,\ptwo\,d\widetilde{x} .
\end{equation*}
The latter, together with \eqref{approximation of pressure norm by
local functions} and \eqref{second identification of xi} imply
\begin{equation*}
\big\Vert  \pepstwo\big\Vert_{0,\Omega_2}^{2}
\rightarrow \int_{\Omega_2}\big(\,\E\,\partial_z\,\xi 
- \alpha\,\vone\cdot\n\,\vert_{\,\Gamma}+\pone\,\vert_{\,\Gamma}\,\big)\,\ptwo\,d\widetilde{x}
=\int_{\Gamma}\ptwo\,\ptwo
\,d\widetilde{x}=\big\Vert \ptwo\big\Vert_{0,\Omega_2}^{2} .
\end{equation*}
Again, the convergence of norms together with the weak convergence of
the solutions stated in Corollary \ref{Th Weak convergence of the
whole sequence}, imply the strong convergence Statement \eqref{strong
convergence of the epsilon pressure in L2}.
\qed
\end{proof}
%
%
%
%
%
\subsection{Comments on the Ratio of Velocities}
The ratio of velocity magnitudes in the tangential and the normal directions is very high and tends to infinity as expected. Since $\{ \Vert \vnormeps \Vert_{0,\Omega_2}:\epsilon>0\,\}$ is bounded, it follows that $\Vert \epsilon\,\vnormeps \Vert_{0,\Omega_2}=\epsilon \Vert \vnormeps \Vert_{0,\Omega_2}
\rightarrow 0$. Suppose first that $\vtan\neq0$ and consider the following quotients
\begin{equation*}
\frac{\Vert \vtaneps \Vert_{0,\Omega_2}}{\Vert \vnormeps \Vert_{0,\Omega_2}}=
\frac{\Vert \epsilon\,\vtaneps \Vert_{0,\Omega_2}}{\Vert \epsilon\,\vnormeps \Vert_{0,\Omega_2}}>
\frac{\Vert \vtan \Vert_{0,\,\Omega_2}-\delta}{\Vert \epsilon\,\vnormeps \Vert_{0,\Omega_2}}>0 .
\end{equation*}
The lower bound holds true for $\epsilon>0$ small enough and
adequate $\delta>0$. Then, we conclude that the ratio of tangent
component over normal component $L^{2}$-norms, blows-up to
infinity i.e., the tangential velocity is much faster than
the normal one in the thin channel.

If $\vtan=0$ we can not use the same reasoning, so a further
analysis has to be made. Suppose then that the solution
$[\,(\,\vone, \vtwo\,), \,(\,\pone, \ptwo\,)\,]$ of Problem \eqref{lower dimensional limit problem} is such that $\vtan = 0$. Then, the Equation
\eqref{lower dimensional limit problem 2} implies that $\vone\cdot \n = 0$ on $
\Gamma$ i.e., the problem on the region $\Omega_{1}$ is well-posed, independently from the activity on the interface $\Gamma$. The pressure on $\Gamma$ becomes subordinate and it must satisfy the following conditions:
\begin{equation*}
\ptwo = \big(\pone
-(\E+\alpha)\,\vone\cdot\n \big)\big\vert_{\Gamma}
= \big(\pone
-(\E+\alpha)\,\Q^{-1}\,\grad\,\pone\cdot\n \big)\big\vert_{\Gamma},
\end{equation*}
\begin{equation*}
\grad_{\!T}\,\ptwo = \mathbf{f}^{2}_{T} .
\end{equation*}
On the other hand, the values of $\pone$ are defined by $h^{1}$ then,
if we impose the condition on the forcing term $\mathbf{f}^{2}_T$
\begin{equation*}
\mathbf{f}^{2}_{\,T} \neq \grad_{\!T}\,\big(\,\pone -(\E+\alpha)\,\vone\cdot\n\,\big)\big\vert_{\Gamma} ,
\end{equation*}
we obtain a contradiction. Consequently, restrictions on the forcing terms $\mathbf{f}^{2}$ and $h^{1}$ can be given, so that  $\vtan \neq 0$ and the magnitudes relation $\Vert \vtaneps  \Vert_{0,\Omega_2}\gg
\Vert \vnormeps  \Vert_{0,\Omega_2}$ holds for $\epsilon>0$ small enough, as discussed above.
%
%
\bibliographystyle{plain}
\def\cprime{$'$} \def\cprime{$'$} \def\cprime{$'$}

\end{document}